\renewcommand{\o}{\omega}
\renewcommand{\div}{\,\mathrm{div}\,}
\newcommand{\g}{\gamma}
\newtheorem{thm}{Theorem}[section]
\newtheorem{prop}[thm]{Proposition}
\newtheorem{lemma}[thm]{Lemma}
\newtheorem{defn}[thm]{Definition}
\newtheorem{preremark}[thm]{Remark}
\newenvironment{remark}{\begin{preremark}\rm}{\medskip \end{preremark}}
\numberwithin{equation}{section}
\newcommand{\R}{\mathbb R}
\newcommand{\grad} {\nabla}
\newcommand{\dx} {\; \mathrm{d} x}
\def\XXint#1#2#3{{\setbox0=\hbox{$#1{#2#3}{\int}$}
       \vcenter{\hbox{$#2#3$}}\kern-.5\wd0}}
\newcommand{\meanbar}[1]{%
\setbox0 = \hbox{$#1 \int$}
\hbox to 0pt{%
\thinspace
\hskip 0.1\wd0
\raise 0.5\ht0
\hbox{%
\lower 0.5\dp0
\hbox{\rule{0.8\wd0}{2\linethickness}}
}%
\hss
}%
}
    \newcounter{myfootertablecounter}
\begin{document}
\title{On the regularity of the Non-dynamic Parabolic Fractional Obstacle Problem}


\author{Ioannis Athanasopoulos, Luis Caffarelli, Emmanouil Milakis}
\date{}
\maketitle


\begin{abstract}
In the class of the so called non-dynamic Fractional Obstacle Problems of parabolic type, it is shown how to obtain higher regularity as well as optimal regularity of the space derivatives of the solution. Furthermore, at free boundary points of positive parabolic density, it is proven that the time derivative of the solution is H\"{o}lder continuous. Finally, at regular free boundary points, space-time regularity of the corresponding free boundary is obtained for any fraction $s\in(0,1)$.
\\

AMS Subject Classifications: 35R45, 35R35, 49J40, 49K20, 91G80

\textbf{Keywords}: Parabolic Obstacle Problems, Hole-filling Method, Free Boundary Regularity.

\end{abstract}
\section{Introduction}\label{section1}

  Motivated by problems in optimal control and financial engineering, considerable literature was dedicated to obstacle problems for the L\'{e}vy process i.e.
$$\min(\partial_tu+(-\Delta)^s, u-\psi)=0  \ \ \ \ \text{in} \ \ \R^{n-1}.$$
 In this work our motivation is guided by the Signorini problem or the semi-permeable membrane model. In this case $\R^{n-1}$ is our physical boundary and represents a plane ($n=3$ case) where the elastic set is lying or a semi-permeable membrane across it a chemical substance (or heat) is flowing into $\Omega$. The set $\Omega$ is composed of a non-homogeneous anisotropic material where its evolution is governed by 
 $$\partial_tu-\frac{1}{x_n^{\gamma}} \div(x_n^{\gamma} \grad u)=0 \ \ \ \ \ \ \text{in} \ \ \Omega. $$
This induces a fractional heat equation
$$H^su:=(\partial_t-\Delta)^s u=0 \ \ \ \ \text{on} \ \ \R^{n-1}$$
where $\gamma =1-2s$, for $0<s<1$. This is an example of a Master equation $Mu=0$ (see \cite{CSmaster}) which appears in the study of continuous time random walks where the random jumps occur with random time lag. We have coined our problem, as in the $\gamma =0 $ case (see \cite{ACM1}), 
non-dynamical in order to distinguish it from the ones where the processes are governed by equations of the type
$$\partial_tu+Mu=0 \ \ \ \ \text{on} \ \ \R^{n-1} \ \ $$
(see \cite{KK74} and \cite{KMS73}).

In \S\ref{SectionPrel} a number of known properties of solutions to degenerate parabolic equations are included which will be used in the subsequent sections. The formulation of our problem (\S\ref{SectionBasic}) follows the generic scheme of parabolic obstacle problems and as such existence, uniqueness as well as basic properties can be found in \cite{ACMApprox}. Thus we concentrate on studying higher regularity. We first prove (\S\ref{SectionSpace}) H\"{o}lder continuity of space derivatives for any $0<s<1$; thus extending the result of $s=\frac{1}{2}$ case (see \cite{Athparabolic}). Then in \S\ref{SectionOptimal} via a monotonicity formula similar to the one in \cite{ACM1} we obtain the optimal regularity of space derivatives. Since, in general the time derivative can experience jumps at free boundary points, we prove that at such points of positive parabolic density with respect to the coincidence set the time derivative is continuous nearby (\S\ref{SectionTime}). The quasi-convexity introduced in \cite{ACM1} together with the continuity of the time derivative of \S\ref{SectionTime} allows us to use the elliptic free boundary theory developed in \cite{ACS} and \cite{CSS}, using the appropriate elliptic Almgren's frequency formula. Therefore we are able to obtain (\S\ref{SectionFB}) space-time smoothness of the free boundary at regular points for any $0<s<1$. This is feasible since, in our case, the diffusion term is always dominant, as opposed to the one for the L\'{e}vy processes mentioned above where the free boundary theory breaks down at $s=\frac{1}{2}$. 


\section{Preliminaries}\label{SectionPrel}

In this section we state certain known properties of solutions to degenerate parabolic equations which will be of use in the sequel. We consider degenerate parabolic operators of the form
\begin{equation} \label{dpo}
w(x',y)\partial _t u (x',y,t) - \div (w(x',y)A \nabla u(x',y,t)) = 0
\end{equation}
where $w(x',y)$ is an $A_2-$weight in $\R ^n$. Note that $w(x',y)= |y|^\g$ with $\g \in (-1,1)$ falls into this class. Equations of type (\ref{dpo}) appear when one pulls back the heat operator via a quasiconformal mapping from $\R^n$ into $\R^n$. They also model the diffusion of the temperature in a non-isotropic and non-homogeneous material. In such case, weight $w(x',y)$ is equal to the product $d(x',y)h(x',y)$ where $d(x',y)$ is the density of the material at point $(x',y)$ while $h(x',y)$ represents the specific heat at that point (see \cite{Gu}).  We refer the reader to \cite{CHSE1}, \cite{GW}, \cite{ISH}  and references therein for the study of solutions to parabolic degenerate equations of type (\ref{dpo})

\par Let $B_1$ be the unit ball in $\R^n$ with center at the origin. We say that a function $u(x',y,t) \in L^2((-1,0); H^1(B_1, |y|^\g))=L^2(Q_1,|y|^\g)$, where $Q_1 := B_1 \times (-1,0)$, is a solution of
\begin{equation}\label{def_1}
\div (|y|^\g \nabla u(x',y,t))-|y|^\g\partial _t u(x',y,t)=0
\end{equation}
in $Q_1$ if
\begin{align*}
\int_{Q_1} ( |y|^\g u_{x_i} \eta _{x_i} - |y|^\g u \eta _t) \ dx' dy dt =0
\end{align*}
for every $\eta \in \{ \eta \in L^2((-1,0]; H_0^1(B_1, |y|^\g)): \eta _t \in L^2((-1,0]; L^2(B_1, |y|^\g)) \} $ and $\eta(-1)= \eta(0)=0$.
\par Here we denote by $L^p(B_1, |y|^\g)),\ p \geq 1$ the Banach space of all measurable functions $f$ defined on $B_1$ such that 
$$||f||_{L^p(B_1, |y|^\g))} := \left( \int_{B_1} |f(x',y)|^p |y|^\g \ dx' dy \right) ^{1/p} < \infty .$$
Also,  $H_0^1(B_1, |y|^\g)$ and $ H^1(B_1, |y|^\g)$ denote the closure of $C_0^\infty (B_1)$ and $C^\infty (\overline{B_1})$ under the weighted Sobolev norm
$$\left( \int_{B_1} |f(x',y)|^2 |y|^\g \ dx' dy +  \int_{B_1} |\nabla f(x',y)|^2 |y|^\g \ dx' dy  \right) ^{1/2}.$$

\begin{prop}\label{prop2.4}
Let $u$ be a positive solution of (\ref{def_1}) in $Q_1$. Then there exists a positive dimensional constant $C$ such that if $Q_{R} (0) \subset Q_1,\ R>0$,
$$||u||_{L^{\infty}(Q_{R/2})} \leq C ||u||_{L^p(Q_R, |y|^\g))}$$
for every $0<p<2$.
\end{prop}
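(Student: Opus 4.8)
The plan is to establish the $L^\infty$–$L^p$ estimate in two stages: first obtain the bound for the exponent $p=2$ by a Moser iteration adapted to the $A_2$-weight $|y|^\g$, and then upgrade it to all $0<p<2$ by an interpolation argument. For the Moser iteration I would test the weak formulation of (\ref{def_1}) with $\eta = u^{2\b-1}\zeta^2$ for a cutoff $\zeta$ in space-time and $\b\ge 1$, after first regularizing $u$ (replacing $u$ by $u+\delta$ and truncating at a high level so that all the powers are legitimate test functions in the weighted class, then letting $\delta\to0$). The key point is that the weighted Sobolev inequality holds in this setting: because $|y|^\g$ is an $A_2$-weight, one has
\begin{equation*}
\left(\int_{B_r}|v|^{2\chi}|y|^\g\,dx'dy\right)^{1/\chi}\le C r^{\sigma}\int_{B_r}\left(|\nabla v|^2+r^{-2}|v|^2\right)|y|^\g\,dx'dy
\end{equation*}
for some $\chi=\chi(n,\g)>1$ (the weighted Sobolev exponent), together with the weighted Poincaré inequality; these are standard for $A_2$-weights and I would cite \cite{GW} or the references in the paragraph preceding the statement. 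Combining the energy estimate from the test function with this Sobolev inequality and the parabolic interpolation that trades the spatial gain $\chi$ against the $L^\infty_t L^2_x$ bound yields the basic iteration inequality relating $\|u\|_{L^{2\b\chi}}$ on a smaller cylinder to $\|u\|_{L^{2\b}}$ on a larger one, with a constant of the form $C(1+\b)/(r_k-r_{k+1})^{2}$ (times a power of $r$ absorbing the homogeneity $\sigma$).

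Next I would run the iteration: set $\b_k=\chi^k$, choose a nested sequence of cylinders $Q_{r_k}$ with $r_k\downarrow R/2$ and $r_k-r_{k+1}\sim 2^{-k}R$, and take the limit. The product of the iteration constants converges because $\sum k\chi^{-k}<\infty$, giving
\begin{equation*}
\|u\|_{L^\infty(Q_{R/2})}\le C\|u\|_{L^2(Q_{3R/4},|y|^\g)}.
\end{equation*}
Here positivity of $u$ is used so that all the powers $u^{2\b}$ are well defined; by working with $|u|$ one could even drop that hypothesis, but positivity is all we need. The scaling in $R$ is tracked by the change of variables $(x',y,t)\mapsto(x'/R,y/R,t/R^2)$, under which (\ref{def_1}) is invariant and the weight rescales homogeneously, so the constant $C$ depends only on $n$ and $\g$ (a "dimensional constant" once $\g$ is fixed in $(-1,1)$).

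Finally, to pass from $p=2$ to arbitrary $0<p<2$ I would use the standard self-improving trick. Fix $0<p<2$ and, for $R/2\le \rho<r\le R$, interpolate
\begin{equation*}
\|u\|_{L^2(Q_\rho,|y|^\g)}\le \|u\|_{L^\infty(Q_\rho)}^{1-p/2}\,\|u\|_{L^p(Q_r,|y|^\g)}^{p/2},
\end{equation*}
insert this into the $L^\infty$–$L^2$ bound applied on the pair $(\rho,r)$ (with the constant now carrying the correct power of $(r-\rho)$), apply Young's inequality to absorb the $\|u\|_{L^\infty(Q_\rho)}^{1-p/2}$ term, and then remove the resulting $\eps\|u\|_{L^\infty(Q_\rho)}$ by the iteration/absorption lemma of Giaquinta–Giusti type on the increasing family of cylinders. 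This yields $\|u\|_{L^\infty(Q_{R/2})}\le C\|u\|_{L^p(Q_R,|y|^\g)}$ for every $0<p<2$, as claimed. The main obstacle is the first stage: making the Moser iteration rigorous in the degenerate weighted parabolic setting — in particular justifying the test functions after regularization, and invoking the correct weighted Sobolev and Poincaré inequalities for $A_2$-weights with the right scaling — while everything after that (the iteration bookkeeping and the interpolation) is routine.
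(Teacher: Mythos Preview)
Your proposal is correct and is essentially the approach underlying the paper's proof: the paper simply invokes Lemma~2.3 of \cite{CHSE1} for the $A_2$-weight $w(x',y)=|y|^\g$, and that lemma is proved precisely by the weighted Moser iteration plus the $L^2\to L^p$ self-improvement you outline. In other words, you have sketched the content of the cited reference rather than taken a different route.
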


\begin{proof}
Apply Lemma 2.3 of \cite{CHSE1} for $w(x,y)=|y|^\g \in A_2$.
\end{proof}

We will also need Poincar\'e type inequalities in the context of the weighted Sobolev spaces (see \cite{ChFra}, \cite{FKS}). The next proposition is a weighted imbedding theorem.

\begin{prop} \label{weighted_imb_thm}
Let $u \in L^\infty((-T,0); L^2(B_R, |y|^\g)) \cap L^2((-T,0); H_0^1(B_R, |y|^\g))$ and $Q=B_R \times (-T,0)$, $w(B_R)= \int_{B_R} |y|^\g \ dx' dy.$ Then
\begin{align*}
\left( \frac{1}{w(B_R)} \int_{Q} |u(x',y,t)|^{2 \lambda } |y|^\g \ dx' dy dt \right) ^{\frac{1}{2 \lambda} }&\leq C \left( \frac{R}{\sqrt{T}} \right) ^{\frac{1}{\lambda} } \left( \sup_{(-T,0)} \left( \frac{1}{w(B_R)} \int_{B_R} u^2(x',y,t) |y|^\g \ dx' dy  \right) ^{\frac{1}{2}} \right) ^{1-\frac{1}{\lambda } } \\
&\ \ \ \cdot \left( \frac{1}{w(B_R)} \int_{Q} |\nabla u(x',y,t)|^2  |y|^\g \ dx' dy dt \right) ^{\frac{1}{\lambda} }
\end{align*}
where $\lambda = 2 - \frac{1}{k} >1$, $k=\frac{n}{n-1}  + \delta $ for some $\delta >0$ and $C$ is a positive constant that depends only on $\g$ and the dimension.
\end{prop}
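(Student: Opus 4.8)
The plan is to follow the classical interpolation argument that produces parabolic Sobolev-type (Ladyzhenskaya–type) inequalities, but carried out in the weighted setting where Lebesgue and volume measures are replaced by $|y|^\g\,dx'dy$. The starting point is the fact that $|y|^\g$ is an $A_2$-weight, so the weighted Sobolev-Poincaré inequality of \cite{FKS} applies: for $u\in H_0^1(B_R,|y|^\g)$ one has, for the relevant exponent $k=\frac{n}{n-1}+\delta>1$,
\begin{equation}\label{eq:wSP}
\left(\frac{1}{w(B_R)}\int_{B_R}|u|^{2k}|y|^\g\,dx'dy\right)^{\frac{1}{2k}}\le C\,R\left(\frac{1}{w(B_R)}\int_{B_R}|\nabla u|^{2}|y|^\g\,dx'dy\right)^{\frac12}.
\end{equation}
Here the fact that one can take the integrability gain $2k$ strictly bigger than $2$ on the left is exactly the self-improving property of the weighted Poincaré inequality for $A_2$-weights; the parameter $\delta>0$ quantifies this gain and will depend only on $\g$ and $n$. (The constant picks up the scaling factor $R$ by the usual dilation of $B_R$ to $B_1$, using that $w$ is homogeneous of a fixed degree.)

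Next I would interpolate in the $y'$-less variable, i.e. for fixed $t$, between $L^2$ and $L^{2k}$ on $B_R$ with weight $|y|^\g$: with $\lambda:=2-\frac1k\in(1,2)$ one checks $\frac{1}{2\lambda}=\frac{\theta}{2}+\frac{1-\theta}{2k}$ for the appropriate $\theta=\theta(k)\in(0,1)$, so by Hölder
\begin{equation}\label{eq:interp}
\left(\frac{1}{w(B_R)}\int_{B_R}|u|^{2\lambda}|y|^\g\,dx'dy\right)^{\frac{1}{2\lambda}}\le \left(\frac{1}{w(B_R)}\int_{B_R}|u|^{2}|y|^\g\,dx'dy\right)^{\frac{\theta}{2}}\left(\frac{1}{w(B_R)}\int_{B_R}|u|^{2k}|y|^\g\,dx'dy\right)^{\frac{1-\theta}{2k}}.
\end{equation}
Raising \eqref{eq:interp} to the power $2\lambda$, substituting \eqref{eq:wSP} into the second factor, and bounding the first factor by its supremum over $t\in(-T,0)$, one gets a pointwise-in-$t$ bound whose right side is
$$\bigl(\sup_t \text{(weighted $L^2$ mass)}\bigr)^{\lambda\theta}\cdot C R^{2\lambda(1-\theta)}\,\bigl(\text{weighted Dirichlet energy at time }t\bigr)^{\lambda(1-\theta)}.$$
Finally I integrate in $t$ over $(-T,0)$: the exponent $\lambda(1-\theta)$ is chosen precisely so that it equals $1$, so the time integral of the Dirichlet-energy factor is exactly the space-time weighted Dirichlet energy appearing on the right of the claimed inequality, while the $\sup_t$ factor comes straight out; the power of $R$ combines with the measure of $(-T,0)$, namely $T$, to give the stated factor $(R/\sqrt T)^{1/\lambda}$ after taking the overall $\frac{1}{2\lambda}$-th root. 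Tracking the bookkeeping of exponents is where one must be careful: one needs $\lambda\theta=2(1-\frac1\lambda)$ and $\lambda(1-\theta)=\frac1\lambda$ so that the final normalization and the powers of $R$ and $T$ match, but these are forced algebraically by the choice $\lambda=2-\frac1k$, so no genuine difficulty arises there.

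The main obstacle is thus not the interpolation bookkeeping but securing the weighted Sobolev–Poincaré inequality \eqref{eq:wSP} with a gain $2k>2$ in the integrability exponent; this rests on the Fabes–Kenig–Serapioni theory for $A_2$-weights (and its sharpening giving a quantitative $\delta$), together with the observation that $|y|^\g$, $\g\in(-1,1)$, is an $A_2$-weight on $\R^n$ — both of which are available from the references cited in the excerpt, so the argument is a matter of assembling them correctly and checking scaling.
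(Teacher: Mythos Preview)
Your approach is correct and is exactly what the paper has in mind: its entire proof is the single line ``follows that of Theorem~1.2 of \cite{FKS} for $A_2$-weight $w(x',y)=|y|^\gamma$,'' i.e., invoke the Fabes--Kenig--Serapioni weighted Sobolev inequality and run the standard fixed-time interpolation plus time-integration that you outline. One small bookkeeping slip to fix: in your closing paragraph you write $\lambda(1-\theta)=\tfrac{1}{\lambda}$ and $\lambda\theta=2(1-\tfrac{1}{\lambda})$, but the correct identities---consistent with what you yourself use a few lines earlier---are $\lambda(1-\theta)=1$ and $\lambda\theta=\lambda-1$, and with these the exponents on the $\sup_t$ and gradient factors drop out immediately.
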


\begin{proof}
The proof follows that of Theorem 1.2 of \cite{FKS} for $A_2$-weight $w(x',y)=|y|^\g$.
\end{proof}

The following is the classical Poincar\'e inequality for functions in $W^{1,2}$-weighted Sobolev space. For the proof, we refer the reader to \cite{FKS}.

\begin{prop} \label{Poincare}
For any function $v\in W^{1,2}(B_1,|y|^\g)$ the following inequality holds
$$\int_{\partial B_r}|v(x',y)-\bar{v}|^2|y|^\g d \sigma\leq Cr\int_{B_r}|\grad v(x',y)|^2|y|^\g dx'dy$$
where $\bar{v}=\frac{1}{\omega_{n+\g}r^{n+\g}}\int_{\partial B_r}v(x',y)|y|^\g d\sigma$ for $\omega_{n+\g}:=\omega(\partial B_1)$ and $C$ is a constant that depends only on $n$ and $\g$.
\end{prop}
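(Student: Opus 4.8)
The plan is to reduce the stated trace-type inequality to the standard weighted Poincaré inequality on balls (or annuli) for the $A_2$ weight $|y|^\g$, which is the content of the Fuglede–Kenig–Serapioni type results in \cite{FKS}, together with a trace estimate that moves the $L^2$ norm from the solid ball $B_r$ to its boundary $\partial B_r$. First I would recall that for an $A_2$ weight $w$ the pair $(w,w)$ supports a $(2,2)$-Poincaré inequality on balls, i.e. $\int_{B_r}|v-\bar v_{B_r}|^2 w \le C r^2 \int_{B_r}|\grad v|^2 w$, where $\bar v_{B_r}$ is the weighted average over the solid ball; this is exactly Theorem~1.3 (or the corresponding statement) of \cite{FKS}. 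Since $w(x',y)=|y|^\g$ with $\g\in(-1,1)$ is $A_2$ and moreover is locally doubling and integrable on spheres, one also has the trace inequality
\begin{equation}\label{eq:trace-sketch}
\int_{\partial B_r}|v-\bar v|^2\,|y|^\g\,d\s \;\le\; C\Big(\frac{1}{r}\int_{B_r}|v-\bar v|^2\,|y|^\g\,dx'dy \;+\; r\int_{B_r}|\grad v|^2\,|y|^\g\,dx'dy\Big),
\end{equation}
which follows by the usual divergence-theorem / radial integration argument: write $|v-\bar v|^2|y|^\g$ at radius $r$ as an integral over the annulus $B_r\setminus B_{r/2}$ of $\partial_\rho\big(|v-\bar v|^2|y|^\g\big)$ plus a zero-order term, using that $|y|^\g=\rho^\g|\,\omega_n\,|^\g$ in polar coordinates so that radial derivatives of the weight are controlled.

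Next I would combine \eqref{eq:trace-sketch} with the solid-ball Poincaré inequality: the first term on the right of \eqref{eq:trace-sketch} is bounded by $\tfrac{1}{r}\cdot Cr^2\int_{B_r}|\grad v|^2|y|^\g = Cr\int_{B_r}|\grad v|^2|y|^\g$, so both terms collapse to the single term $Cr\int_{B_r}|\grad v|^2|y|^\g\,dx'dy$, which is the claimed bound. The only remaining point is to reconcile the average used in the statement, namely $\bar v=\frac{1}{\o_{n+\g}r^{n+\g}}\int_{\partial B_r}v\,|y|^\g\,d\s$, with the solid-ball average $\bar v_{B_r}$ appearing in the FKS inequality; this is handled by the standard observation that replacing the constant $\bar v$ by any other constant $c$ changes $\int_{\partial B_r}|v-c|^2|y|^\g$ by at most a factor depending only on the ratio $|\bar v-\bar v_{B_r}|$, and $|\bar v-\bar v_{B_r}|$ is itself controlled (via Jensen and the trace inequality applied to the difference of the two averaging operators, or simply because the constant minimizing $\int_{\partial B_r}|v-c|^2|y|^\g$ is precisely $\bar v$, so $\int_{\partial B_r}|v-\bar v|^2|y|^\g \le \int_{\partial B_r}|v-\bar v_{B_r}|^2|y|^\g$).

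The main obstacle I anticipate is the trace estimate \eqref{eq:trace-sketch} in the degenerate regime: when $\g>0$ the weight vanishes on $\{y=0\}$ and when $\g<0$ it blows up there, so one must be careful that the sphere $\partial B_r$ genuinely carries finite weighted measure (it does, since $\int_{\partial B_r}|y|^\g\,d\s<\infty$ for $\g>-1$) and that the radial fundamental-theorem-of-calculus argument does not pick up an uncontrolled contribution from a neighborhood of the singular set $\{y=0\}$. The clean way around this is to approximate by smooth functions (which is legitimate, as $W^{1,2}(B_1,|y|^\g)$ is by definition the closure of $C^\infty(\overline{B_1})$) and to perform the radial integration in polar coordinates, where the weight factorizes as $\rho^\g$ times an angular factor and the estimate becomes a one-dimensional Hardy/Poincaré estimate with the integrable weight $\rho^{\g}$ — a situation covered by the weighted one-dimensional inequalities underlying \cite{FKS}. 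With that in hand the proof is a two-line combination, and in fact the authors may simply cite \cite{FKS} for the whole statement, as the excerpt's phrasing ("we refer the reader to \cite{FKS}") suggests.
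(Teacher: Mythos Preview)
Your proposal is correct and in fact more detailed than the paper's own ``proof,'' which consists solely of the sentence ``For the proof, we refer the reader to \cite{FKS}.'' Your final remark anticipating exactly this is on the mark; the trace-plus-solid-ball-Poincar\'e reduction you outline is the standard way to unpack such a citation, and the observation that the sphere average $\bar v$ minimizes $\int_{\partial B_r}|v-c|^2|y|^\g\,d\sigma$ cleanly disposes of the average-reconciliation issue.
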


Harnack inequality has been proved for solutions of (\ref{dpo}) in \cite{CHSE1}.

\begin{prop} \label{Harnack_dpo}
Let $u$ be a positive solution of (\ref{dpo}) in $Q_R:= B_R \times (-R^2,0)$ where $A$ is uniformly elliptic and $w$ is an $A_2$-weight. Then there exists $C$ which depends on the dimension, the ellipticity constants and the $A_2$-constant such that
\begin{align*}
\sup_{\tilde{Q}_{R/2}^-} u(x',y,t) \leq C \inf_{\tilde{Q}_{R/4}^+} u(x',y,t)
\end{align*}
where $\tilde{Q}_R^-:= B'_{R}\times (-R,R) \times \left(- \frac{3}{4} R^2,-\frac{1}{2} R^2 \right)$ and $\tilde{Q}_R^+:= B'_{R}\times (-R,R)  \times \left( -\frac{1}{4} R^2,0\right]$.
\end{prop}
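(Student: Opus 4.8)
The plan is to obtain this as Moser's parabolic Harnack inequality in the degenerate, $A_2$-weighted setting; it is contained in \cite{CHSE1}, and I indicate the structure of the argument. By the parabolic rescaling $(x',y,t)\mapsto(Rx',Ry,R^2t)$, which preserves the class of equations (\ref{dpo}) with the same ellipticity and $A_2$ constants and preserves positivity, I may assume $R=1$. Write $d\mu:=w(x',y)\,dx'\,dy\,dt$; since $w\in A_2$ the measure $\mu$ is doubling, and this, together with the weighted embeddings already recorded, is all that is used about the degeneracy. Because $u>0$ solves (\ref{dpo}) it is simultaneously a positive subsolution and a positive supersolution, so Moser's estimates for both apply to $u$ itself.

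\textbf{Step 1: weighted energy and logarithmic estimates.} First I would insert into the weak formulation the test functions $\eta^2u^{2\b-1}$ (with $\b\ge\tfrac12$, $\b\ne0$, truncating $u$ if needed) and, separately, $\eta^2/u$, where $\eta(x',y,t)$ is a smooth space--time cutoff. Since $w$ occurs as a common factor in the parabolic and the elliptic terms and $A$ is uniformly elliptic, one gets the classical energy inequality for $v:=u^{\b}$,
$$\sup_{t}\int\eta^2 v^2\,w\,dx'dy\ +\ \int\eta^2|\nabla v|^2\,d\mu\ \le\ C\int\big(|\nabla\eta|^2+\eta|\eta_t|\big)\,v^2\,d\mu,$$
and, with $\ell:=\log u$ and $c(t)$ the weighted spatial mean of $\ell$, the logarithmic estimate
$$\sup_{t}\int\eta^2(\ell-c(t))^2\,w\,dx'dy\ +\ \int\eta^2|\nabla\ell|^2\,d\mu\ \le\ C\int\big(|\nabla\eta|^2+\eta|\eta_t|\big)\,d\mu.$$
Combined with the weighted Poincar\'e inequality (Proposition \ref{Poincare} and the version in \cite{FKS}), the latter shows that $\log u$ has bounded mean oscillation with respect to $\mu$, with the usual asymmetry between its means on past and future time slices.

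\textbf{Step 2: Moser iteration and chaining.} Next I would substitute the energy inequality into the weighted parabolic Sobolev embedding of Proposition \ref{weighted_imb_thm} and iterate over a geometric sequence of shrinking cylinders. For the positive subsolution $u$ this gives local boundedness: for every $p>0$,
$$\sup_{Q'}u\ \le\ C_p\Big(\frac{1}{\mu(Q'')}\int_{Q''}u^{p}\,d\mu\Big)^{1/p}$$
for concentric cylinders $Q'\subset Q''\subset Q_1$ with the same top time. For the positive supersolution $u$, iteration on small negative powers together with the parabolic $\mathrm{BMO}$ bound on $\log u$ from Step 1 — fed through a Bombieri--Giusti / John--Nirenberg argument, legitimate because $\mu$ is doubling — gives the weak Harnack inequality: for a fixed small $p_0>0$,
$$\Big(\frac{1}{\mu(Q^{-})}\int_{Q^{-}}u^{p_0}\,d\mu\Big)^{1/p_0}\ \le\ C\,\inf_{Q^{+}}u,$$
whenever $Q^{-}$ precedes $Q^{+}$ in time with a fixed gap. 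Chaining the two (with $Q''$ playing the role of $Q^{-}$) produces, for any two cylinders compactly contained in $Q_1$ and ordered in time with a gap, a bound of $\sup u$ on the earlier one by $C\inf u$ on the later one. Since the normalized cylinders $\tilde Q^{-}_{1/2}$ (top time $-\tfrac18$) and $\tilde Q^{+}_{1/4}$ (bottom time $-\tfrac1{64}$) are of exactly this type, a finite Harnack chain yields $\sup_{\tilde Q^{-}_{1/2}}u\le C\inf_{\tilde Q^{+}_{1/4}}u$, which is the assertion after undoing the scaling.

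\textbf{Main obstacle.} The only genuinely weight-sensitive ingredients are the weighted parabolic Sobolev inequality — already supplied by Proposition \ref{weighted_imb_thm} — and the John--Nirenberg / Bombieri--Giusti step: one must estimate, with respect to $\mu$, the measures of the super- and sub-level sets of $\log u$ and keep careful track of the forward time-shift linking future superlevel sets to past sublevel sets. This is the heart of the argument and the one place where the $A_2$ (doubling) hypothesis on $w$ is indispensable; every other step is insensitive to the degeneracy because $w$ enters only as an overall multiplicative factor in the divergence-form equation.
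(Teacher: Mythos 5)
Your proposal is correct and matches the paper's treatment: the paper gives no proof of this proposition at all, simply invoking the Harnack inequality established in \cite{CHSE1} for degenerate parabolic equations with $A_2$ weights, and your sketch is precisely the Moser-type argument (weighted energy and logarithmic estimates, the weighted Sobolev/Poincar\'e inequalities, John--Nirenberg/Bombieri--Giusti, then chaining) that the cited reference carries out. Nothing further is needed.
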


The previous result can be generalized in the case of non-homogeneous degenerate parabolic equations. First, we recall the Sobolev inequality with weights: There exist constants $C_1>0$ and $k>1$ depending only on $A_2$-constant of the weight such that
\begin{align*}
\left( \fint_{B_r} |u|^{2k} w(x',y) \ dx'dy  \right) ^{1/2k} \leq C_1 r \left( \fint_{B_r} |\nabla u|^{2} w(x',y) \ dx'dy  \right) ^{1/2}
\end{align*}
for all $u \in H_0^1(B_r,w)$. It is known that $k>\frac{n}{n-1}$ for $n \geq 2$ (see \cite{FKS}, \cite{ISH} and the references therein).

\begin{prop} \label{Harnack_f}
Let $u$ be a non-negative solution of
$$\partial _t u  - \frac{1}{w(x',y)} \div (w(x',y)A \nabla u)=f$$
in $Q=B_1 \times (-1,0)$, where $w(x',y) \in A_2,\ A$ is uniformly elliptic and $f \in L^\infty ((-1,0);L^{k'+\delta }(B_1,w))$ for some positive $\delta $ and $k'$ such that $\frac{1}{k} + \frac{1}{k'}=1$. If $Q_R \subset Q$ then there exists a constant $C$ such that 
\begin{align*}
\sup_{Q_R^-} (u+F) \leq C \inf_{Q_R^+} (u+F)
\end{align*}
where $$F= \sup_{t \in (-1,0)} \left( \fint_{B_1} |f(x',y,t)|^{k'+\delta } w(x',y) \ dx'dy \right) ^{1/(k'+\delta )}$$
and $C$ depends only on the dimension, the ellipticity constant of $A$, the $A_2$-constant of $w$ and $\delta $.
\end{prop}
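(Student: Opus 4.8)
The plan is to reduce the inhomogeneous Harnack inequality (Proposition \ref{Harnack_f}) to the homogeneous one (Proposition \ref{Harnack_dpo}) by absorbing the forcing term $f$ into a suitable comparison function. First I would set $v = u + F$ where $F$ is the stated time-sup of the weighted $L^{k'+\delta}$ norm of $f$ on $B_1$. Since $F$ is a constant (in $x'$ and $y$; the $\partial_t v = \partial_t u$ and $v$ still solves the same inhomogeneous equation with right-hand side $f$), $v \geq 0$ and $v$ is a supersolution whenever $f \leq 0$ and a subsolution whenever $f \geq 0$. The point of adding $F$ rather than working with $u$ directly is to guarantee that the ``error'' produced by $f$ is controlled in relative terms by $v$ itself on the relevant cylinder, which is precisely what is needed to run the Moser iteration with a nonzero right-hand side.

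The key step is to carry out the Moser iteration (local boundedness for the subsolution part, and the weak Harnack / infimum estimate for the supersolution part) directly for the inhomogeneous equation, using the weighted Sobolev inequality displayed just before the statement together with the weighted parabolic imbedding of Proposition \ref{weighted_imb_thm}. Concretely: test the equation with $\eta^2 v^{p-1}$ (truncated appropriately), integrate by parts, and estimate the term $\int \eta^2 v^{p-1} f\, w$ by Hölder with exponents $k'+\delta$ and its conjugate, so that $\int |f|^{k'+\delta} w \leq F^{k'+\delta} w(B_1)$ on each time slice and the remaining factor $\|\eta^2 v^{p-1}\|$ is reabsorbed into the energy using that $v \geq F$. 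This produces the standard reverse-Hölder chain $\|v\|_{L^{q\lambda}(Q_{r'})} \lesssim (r-r')^{-\beta}\|v\|_{L^{q}(Q_r)}$ on shrinking cylinders, with $\lambda = 2 - 1/k$ as in Proposition \ref{weighted_imb_thm}; iterating gives $\sup_{Q_R^+} v \lesssim \|v\|_{L^{p_0}(Q_R)}$ for any small $p_0>0$, and the symmetric argument on $-\log v$ (again exploiting $v\geq F>0$ so the logarithm is finite) gives the matching lower bound $\inf_{Q_R^-} v \gtrsim \|v\|_{L^{p_0}(Q_R)}$, by an abstract measure-theoretic lemma (Bombieri–Giusti) linking the two one-sided estimates through a bound on the BMO-type oscillation of $\log v$.

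The main obstacle I expect is the bookkeeping of the forcing term through the iteration: one must verify that the Hölder pairing of $f$ against the test function leaves a factor that is genuinely lower order, i.e.\ that the exponent $k'+\delta$ (strictly larger than the conjugate $k'$ of the Sobolev gain exponent $k$) buys enough integrability so that the $f$-contribution does not degrade the self-improving exponent $\lambda$ and does not blow up as the cylinders shrink. This is exactly where the hypothesis $f \in L^\infty_t L^{k'+\delta}_{x'y}(w)$ with $\delta>0$ (rather than merely $\delta=0$) is used, and it is the reason $F$ is defined with the exponent $k'+\delta$. Everything else—the integration by parts, the Caccioppoli estimate, the parabolic imbedding—is routine once the weighted ingredients quoted above are in hand; in fact this is the weighted parabolic analogue of the classical argument of Moser, adapted as in \cite{CHSE1}, \cite{FKS}, \cite{ISH}, and one could alternatively just cite that the proof there goes through verbatim with $w(x',y)=|y|^\g$ and the extra $f$-term handled as above.
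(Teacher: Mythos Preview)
Your proposal is correct and matches the paper's approach exactly: the paper's entire proof is the one-line instruction ``Set $v=u+F$ and follow the proof of Proposition \ref{Harnack_dpo} in \cite{CHSE1},'' which is precisely the Moser-iteration-with-forcing argument you sketch. Your write-up supplies the details the paper omits (the H\"older pairing of $f$ against the test function, the role of the extra $\delta>0$ in keeping the forcing contribution lower order, the Bombieri--Giusti link); the only cosmetic slip is that you swapped the time locations of the sup and inf estimates in the last paragraph---the local boundedness goes on $Q_R^-$ and the weak Harnack on $Q_R^+$, consistent with the statement.
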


\begin{proof}
Set $v=u+F$ and follow the proof of Proposition \ref{Harnack_dpo} in \cite{CHSE1}.
\end{proof}

Throughout the text, we use the notation $(x',y)$ or $(x',x_n)$, for $x'\in \R^{n-1}$, to denote a point in $x\in\R^n$. Similarly $\R^n_+:=\{(x',x_n):\ x_n>0\}$ is the half space, $B_1^+:=\{x\in B_1:\ x_n>0\}$ is the half unit ball in $\R^n$. The various constants that will appear in the sequel may vary from formula to formula, although for simplicity we use the same letter. If we do not give any explicit dependence for a constant, we mean that it depends only on $s$ and the dimension $n$.

\section{Formulation of the problem and basic properties of solutions}\label{SectionBasic}

The obstacle problem to be studied in this work is a direct generalization (as in \cite{AC2010}) of the non-dynamic parabolic thin obstacle problem, $2^{\text{nd}}$ prototype, studied in \cite{ACM1}; that is, from the heat equation valid in a region $\Omega \subset R^n$ with Signorini type boundary conditions on part of its boundary lying on $\R^{n-1}$ to a degenerate parabolic equation valid in the same region with the relevant boundary conditions. More precisely:

Given a bounded domain  $\Omega$ in $\R^n_+:=\R^{n-1}\times \{x_n\geq 0\}$ with part of its 
boundary $\Sigma\subset \partial\Omega$ that lies on $\R^{n-1}$, a function $\psi(x',t)$ (the obstacle) 
where $\psi<0$ on $(\partial \Omega\setminus\Sigma)\times(0,T]$, $\max \psi(x',0)>0$, a 
function $\phi$ (boundary and initial data) with $\phi=0$ on $(\partial \Omega\setminus\Sigma)\times(0,T]$, $\phi\geq \psi$ on $\Sigma\times\{0\}$, and an $f$ (non-homogeneous term), find a function $u$ such that 
\begin{align}
\begin{cases} \label{FBP3}
\div (y^\g \nabla u)-y^\g\partial _t u  = f ,\ \ & \text{for} \ (x',y,t) \in \Omega \times(0,T] \\
u(x',0,t) \geq \psi (x',t), \ \ & \text{for} \ (x',t) \in \Sigma \times (0,T] \\
\lim_{y \to 0^+} y^\g  u_y (x',y,t)=0, \ \ & \text{for} \ u(x',0,t) > \psi (x',t) \\
\lim_{y \to 0^+} y^\g  u_y (x',y,t) \leq 0,\ \ & \text{for} \ (x',y,t) \in \Omega \times (0,T] \\
u(x',y,t)=\phi (x',y,t),\ \ & \text{on} \  \partial_p (\Omega \times (0,T]) \end{cases}
\end{align}
for any $\g\in(-1,1)$, where we have set $x_n=y$ and denote by $\partial_p$ the parabolic boundary.

With no loss of generality, we can assume $f=0$ since for $f$ smooth enough we can subtract from the solution as well from the obstacle a function $w$ which solves the  equation $\div (y^\g \nabla w)-y^\g\partial _t w  = f$.

Also, this problem may be stated in the whole space: Given a function $\psi(x',t)$ with $x'\in \R^{n-1}$ and $t\in \R$ decaying very rapidly at infinity, find a $u(x',y,t)$ where $y=x_n$ such that  
\begin{equation}\label{model2in}
\begin{cases}  
\div(y^\gamma \grad u)-y^\gamma \partial_t=0,   &{\rm{in}} \ \  \R^n_+ \times (0,T]  \cr
\lim_{y \to 0+}y^\gamma u_y \leq 0, \ \ u\geq \psi &{\rm{on}} \ \  \R^{n-1} \times (0,T]  \cr
(\lim_{y \to 0+}y^\gamma u_y)(u-\psi)=0  &{\rm{on}} \ \  \R^{n-1} \times (0,T]  \ \cr
u=\phi &{\rm{on}} \ \   \R^n_+\times \{0\}\cr
\end{cases}
\end{equation}


By the $L_\g (:=div(y^\g\grad)-y^\g\partial_t)$-caloric extension formula (see \cite{ST15} and \cite{NS}) in all of $\R$ (\ref{model2in}) can be written as  
\begin{equation}\label{model3in}
\begin{cases}  
u(x',t)\geq \psi(x',t) &{\rm{for}} \ \  \R^{n-1} \times \R  \cr
(\partial_t-\Delta)^s u(x',t)=0 \ \  &{\rm{for}} \ \  u(x',t)> \psi(x',t) \cr
(\partial_t-\Delta)^s u(x',t)\geq 0  &{\rm{for}} \ \  \R^{n-1} \times \R \ \cr
\end{cases}
\end{equation}
or, as a Hamilton-Jacobi equation,
\begin{equation}
{\rm{min}}\{H^su,u-\psi\}=0  
\ \ \ \  \  \rm{in} \ \  \R^{n-1}\times \R \ 
\end{equation} 
where $s=\frac{1-\g}{2}$ and $$H^su(x',t):=(\partial_t-\Delta)^su(x',t)=-c(s)\lim_{y\to 0^+}(y^{\g}u_y(x',y,t))$$ 
for some normalizing positive constant $c(s)$.

Observe that if, in (\ref{FBP3}), 
we confine ourselves to a domain contained in 
$\Omega \times (0,+\infty)$ whose closure does not meet $\Sigma \times (0,+\infty)$ then the problem is just a standard initial-boundary value problem of degenerate parabolic equation studied by \cite{CHSE1} and others. Therefore we focus our attention on and near 
$\Sigma \times (0,\infty)$; a fortiori in a neiborhood of a free boundary point. On the other hand, it is more convenient to study (\ref{model3in}) in its localized form (\ref{model2in}) near a free boundary point. Hence, in this context, the problems are identical.

As it was mentioned in the introduction existence and certain a priori bounds follow from a generic scheme for parabolic problems. For a proof of the following lemma see \cite{ACMApprox}.

\begin{lemma} \label{a_priori}
Let $\psi(x',t)$ be a $C^2$ function and $\phi(x',y,0)$ a $C^4$ one then there exists a solution $u$ of (\ref{FBP3}) or (\ref{model2in}) such that
\\ \textbf{(i)} $\grad_{x'}u$, $y^\g u_y$,  $u_t$   are $L^\infty$ bounded depending on $\psi$, the initial data, and $||u||_{L^\infty}$
\\ \textbf{(ii)} $u$ is semi-convex i.e. $||(u_{\tau\tau})^-||_{L^\infty}<C(\psi, \phi|_{t=0},||u||_{L^{\infty}})$ where $\tau$ a unit vector in $\R^{n-1}$
\\ \textbf{(iii)} $u$ is quasi-convex   i.e. $||(u_{\zeta\zeta})^-||_{L^\infty}<C(\psi,||u||_{L^\infty})$ where $\zeta$ a unit vector in $(x',t)$ variables provided $(u-\psi)|_{t=0}>0$. 

\end{lemma}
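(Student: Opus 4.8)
The plan is to obtain the solution by the standard penalization scheme for degenerate parabolic obstacle problems (this is the generic construction of \cite{ACMApprox}), and then to establish \textbf{(i)}--\textbf{(iii)} as comparison estimates for the solution $u$ itself, read off from the Hamilton--Jacobi/extension form $\min\{H^su,\,u-\psi\}=0$ together with $H^su=-c(s)\lim_{y\to0^+}y^\g u_y$. Throughout I would work in the localized model (\ref{model2in})--(\ref{model3in}) near a free boundary point, so that there is no lateral boundary to handle and the constants are governed by $\psi$ (and, through the initial slice, by $\phi$). The two structural facts to be used repeatedly are: $u(x',y,t)$ is $L_\g$-caloric in $\{y>0\}$, while on $\{y=0\}$ one has $H^su\ge0$ everywhere and $H^su=0$ off the coincidence set $\{u=\psi\}$; and $H^s$ is translation invariant in the $(x',t)$ variables, so translates and symmetric second differences of $u$ are again admissible competitors.

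For \textbf{(i)}: the $L^\infty$ bound is a comparison with constants. For $\grad_{x'}u$ and for $u_t$, fix a unit vector $e$ in $x'$ (resp.\ a time shift $h$) and set $v:=u(\cdot+he)+Lh-u$ with $L:=\|\partial_e\psi\|_\infty$ (augmented by the corresponding bound on $\phi$ at $t=0$); then $L_\g v=0$ in $\{y>0\}$, and I would show $v\ge0$ by excluding a negative minimum. An interior minimum is ruled out by the strong minimum principle for $L_\g$ and the decay at infinity; a minimum on $\{y=0\}$ at a coincidence point is ruled out since there $v\ge\psi(\cdot+he)+Lh-\psi\ge0$; and a minimum on $\{y=0\}$ at a non-coincidence point is ruled out because there $H^su=0$ while $H^su(\cdot+he)\ge0$, so the conormal quantity $\lim_{y\to0^+}y^\g v_y\le0$, contradicting the Hopf boundary inequality for $L_\g$ unless $v$ is constant (impossible by decay). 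Symmetrizing in $e$ gives the bounds. Finally $\lim_{y\to0^+}y^\g u_y$ is bounded because $0\le H^su\le\|H^s\psi\|_\infty$: it is $0$ off $\{u=\psi\}$, and at a coincidence point $z$ one has $(u-\psi)(z)=0$ with $u-\psi\ge0$, hence $H^s(u-\psi)(z)\le0$ and $H^su(z)\le H^s\psi(z)$; boundedness of $H^s\psi$ is where $\psi\in C^2$ with rapid decay enters.

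For \textbf{(ii)} and \textbf{(iii)}: fix a unit vector $\zeta$ (tangential, in $\R^{n-1}$, for \textbf{(ii)}; in the $(x',t)$ variables for \textbf{(iii)}) and set $\bar u:=\tfrac12\big(u(\cdot+h\zeta)+u(\cdot-h\zeta)\big)$, $v:=\bar u+Ch^2-u$ with $C:=\tfrac12\|\partial_{\zeta\zeta}\psi\|_\infty$ (again augmented by the $C^2$ norm of $\phi|_{t=0}$). As before $L_\g v=0$ in $\{y>0\}$, and $v\ge0$ follows by excluding a negative minimum: interior --- strong minimum principle and decay; thin-set coincidence point --- there $v\ge\tfrac12\big(\psi(\cdot+h\zeta)+\psi(\cdot-h\zeta)-2\psi\big)+Ch^2\ge0$ by the choice of $C$ and $u\ge\psi$; thin-set non-coincidence point --- there $H^su=0$ and $H^su(\cdot\pm h\zeta)\ge0$, so $\lim_{y\to0^+}y^\g v_y\le0$ against the Hopf inequality. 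Hence $u(\cdot+h\zeta)+u(\cdot-h\zeta)-2u\ge-2Ch^2$, i.e.\ $\partial_{\zeta\zeta}u\ge-\|\partial_{\zeta\zeta}\psi\|_\infty$ (plus the initial-data term), which is \textbf{(ii)} for tangential $\zeta$ and \textbf{(iii)} for $\zeta$ in $(x',t)$ --- here the translation invariance of $H^s$ in $t$ is exactly what legitimizes the time-shifted competitors. The hypothesis $(u-\psi)|_{t=0}>0$ is used only in \textbf{(iii)}: it forces $u>\psi$ on a time strip $[0,t_0)$, where $u$ solves the plain $L_\g$-caloric equation and so is smooth with $\partial_{\zeta\zeta}u$ bounded by $\|\phi\|_{C^4}$ (differentiating the equation twice in $t$ costs four derivatives of $\phi$), and this uniform bound on the boundary layer is what lets the comparison argument start near $t=0$.

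The main obstacle is the degenerate/nonlocal maximum-principle toolkit underpinning every step: the strong minimum principle and, above all, the Hopf boundary inequality for $L_\g=\div(y^\g\grad\,\cdot\,)-y^\g\partial_t$ on the thin boundary $\{y=0\}$, namely that at a strict thin-boundary minimum of a non-constant $L_\g$-caloric function $w$ the conormal derivative $\lim_{y\to0^+}y^\g\partial_y w$ is strictly positive --- this is proved by comparison with the explicit barrier behaving like $y^{1-\g}$ (the indicial exponent of the $L_\g$-caloric extension) and is contained in the degenerate parabolic theory of \cite{CHSE1}; together with the nonlocal minimum principle for $H^s$ (a negative minimum of $v-u$ at a point where $u>\psi$ forces $H^s(v-u)\le0$ and, combined with $H^su=0$ there, forces $v-u$ constant) and the rapid decay of $\psi$, which both guarantees that the relevant extrema are attained and rules out the escape in which $v$ turns out to be constant. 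The remaining care concerns the initial layer: in (\ref{FBP3}) the constants in \textbf{(ii)} must be allowed to depend on $\phi|_{t=0}$, the condition $(u-\psi)|_{t=0}>0$ is genuinely needed in \textbf{(iii)}, and the existence statement itself rests on the penalization argument of \cite{ACMApprox}.
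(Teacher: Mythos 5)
The paper itself gives no proof of this lemma: it is quoted from \cite{ACMApprox}, so there is no in-text argument to compare yours against. Your scheme --- existence by penalization, then $L^\infty$ bounds on first derivatives by comparison with translates, and semi-/quasi-convexity by comparison with symmetric second differences, in both cases excluding a negative minimum via the strong minimum principle in $\{y>0\}$, the obstacle inequality at thin coincidence points, and the Hopf-type inequality for the conormal quantity $\lim_{y\to0^+}y^\gamma\partial_y$ at thin non-coincidence points --- is exactly the standard route for this family of problems (it is the argument of \cite{ACM1} for $\gamma=0$, transported to the $A_2$-weighted setting of \cite{CHSE1}), and the structural points you isolate (translation invariance of $H^s$ only in $(x',t)$, the nonlocal minimum principle giving $0\le H^su\le\|H^s\psi\|_\infty$ and hence the bound on $y^\gamma u_y$, the role of $(u-\psi)|_{t=0}>0$ in legitimizing time-shifted competitors) are the right ones. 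Two places are thinner than they should be. First, the $u_t$ bound by comparing $u(\cdot,\cdot+h)+Lh$ with $u$ is circular at $t=0$: verifying $v\ge0$ on the initial slice already requires a one-sided bound on $u_t|_{t=0}$, which is precisely what one is trying to prove; this is resolved only at the penalized level, where one differentiates the equation in $t$ and reads $u^\varepsilon_t(\cdot,0)$ off the equation from $\phi$ (this is where the regularity and compatibility of $\phi$ genuinely enter), so the estimates should be proved for $u^\varepsilon$ and passed to the limit rather than asserted for $u$ directly. Second, your constants in \textbf{(iii)} depend on $\phi|_{t=0}$ through the initial layer, whereas the statement allows dependence only on $\psi$ and $\|u\|_{L^\infty}$; the point is that under $(u-\psi)|_{t=0}>0$ the solution is unconstrained, hence $L_\gamma$-caloric, on a strip $[0,t_0)$ whose width is controlled by $\psi$ and $\|u\|_\infty$, and interior parabolic estimates on that strip then bound $u_{\zeta\zeta}$ there in terms of $\|u\|_\infty$ alone, after which your second-difference comparison runs for $t\ge t_0$. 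With these two repairs the argument is complete.
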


\begin{remark} \label{quasi}
In $(iii)$ of the above lemma we use the term quasi-convexity instead of semi-convexity. The main reason, for this, is to emphasize the qualitative difference stated in $(ii)$ and $(iii)$. Also, to point out that this important and simple property common to "all" obstacle problems of parabolic type, absent in the literature, was first introduced in \cite{ACM1} (its results were presented at an international conference held in Brazil, August 2015, see \cite{ACM1}).
\end{remark}

\begin{remark}\label{HeatDeg} 
In \cite{ST15} (see also \cite{NS}) the fractional heat operator has been realized as a parabolic hypersingular integral and fine properties of solutions have been derived through its connection to the corresponding extension problem which involves a parabolic degenerate equations with an $A_2$ weight. In \cite{AC2010} this degenerate equation has been considered in the study of the continuity of the temperature in boundary heat control problems with fractional diffusion. 
\end{remark}

\section{H\"{o}lder continuity of the space derivative}\label{SectionSpace}

In the present section we show the H\"{o}lder continuity of the space derivative. Let us point out first what we know already about our solution. 

\begin{lemma}\label{lemma3.1}
Let $u$ be a solution to (\ref{FBP3}) in $Q_1^+:=B_1^+\times (-1,0)$. Then

(a) $u(x',y,t)+\frac{c_0}{2}|x'|^2$ is convex in $x'$ where $c_0:=||u_{\tau\tau}||_\infty$.

(b) $\partial_y(y^\gamma u_y(x',y,t))\leq nC_0y^\gamma$ where $$C_0:=||u_{\tau\tau}^-||_\infty+||\psi_{\tau\tau}||_\infty+||u_{t}||_\infty+||\psi_{t}||_\infty.$$

(c) $u(x',y,t)-u(x',0,t)\leq \frac{nC_0}{2(1+\gamma)}y^\gamma$ for all $x'$ and $t$.

(d) If $(u-\psi)(x',y,t)\geq h$ then $(u-\psi)(z',y,s)\geq h-C_0\rho^2$ in the half cylinder $$HQ_\rho'(x',t):=\left\{(z',s)\in Q'_\rho(x',0,t): \grad_x'u(x',y,t)\cdot(z'-x')\geq 0\right\}.$$ 

\end{lemma}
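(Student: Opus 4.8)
The plan is to derive each of the four assertions from the semi-convexity and bounds on $u_t$ in Lemma~\ref{a_priori}, together with the structure of the degenerate equation and the Signorini conditions. For part (a), I would argue directly: $u_{\tau\tau} \geq -c_0$ for every unit vector $\tau \in \R^{n-1}$ (this is exactly the semi-convexity statement (ii), with $c_0 := \|u_{\tau\tau}^-\|_\infty$), so $\partial_{\tau\tau}(u + \tfrac{c_0}{2}|x'|^2) = u_{\tau\tau} + c_0 \geq 0$ in the distributional/viscosity sense, which is convexity in $x'$. I would note the equation $\div(y^\g\nabla u) = y^\g u_t$ needs the $x'$-derivatives to exist in a strong enough sense; this follows from the interior regularity theory for the degenerate equation (Proposition~\ref{prop2.4} and the Harnack results) applied to difference quotients, so there is no real obstacle here.

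For part (b), the idea is to rewrite the equation: expanding $\div(y^\g\nabla u) = y^\g\Delta_{x'}u + \partial_y(y^\g u_y)$, the PDE gives
$$\partial_y(y^\g u_y) = y^\g(u_t - \Delta_{x'} u).$$
Now bound the right-hand side: $u_t \leq \|u_t\|_\infty$, and $\Delta_{x'}u = \sum_{i=1}^{n-1} u_{x_i x_i} \geq -(n-1)\|u_{\tau\tau}^-\|_\infty$ by semi-convexity, so $-\Delta_{x'}u \leq (n-1)\|u_{\tau\tau}^-\|_\infty$. Hence $\partial_y(y^\g u_y) \leq y^\g(\|u_t\|_\infty + (n-1)\|u_{\tau\tau}^-\|_\infty) \leq n C_0 y^\g$ with $C_0$ as defined (the $\psi$-terms in $C_0$ are there to make the constant uniform with the corresponding inequalities for $u-\psi$, and are harmless as an upper bound). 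The one technical point is justifying the pointwise manipulation away from $\{y=0\}$ and then interpreting it across the thin set; since $\lim_{y\to0^+}y^\g u_y \leq 0$, this is consistent with integrating, which is precisely what part (c) does.

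For part (c), integrate the inequality from (b). For fixed $x',t$, write $g(y) := y^\g u_y(x',y,t)$. Then $g'(y) \leq nC_0 y^\g$, and since $\lim_{y\to0^+}g(y) = \lim_{y\to 0^+}y^\g u_y \leq 0$, integrating from $0$ to $y$ gives $g(y) \leq nC_0\,\frac{y^{\g+1}}{\g+1}$, i.e. $y^\g u_y \leq \frac{nC_0}{1+\g}y^{\g+1}$, so $u_y \leq \frac{nC_0}{1+\g}y$. Integrating once more in $y$ from $0$ to $y$ yields $u(x',y,t) - u(x',0,t) \leq \frac{nC_0}{2(1+\g)}y^2$. (I note the statement in the lemma reads $y^\g$ where $y^2$ is expected from this computation; I would present the bound $\frac{nC_0}{2(1+\g)}y^2$, which is the natural conclusion.) The main thing to be careful about is that $1+\g > 0$ since $\g \in (-1,1)$, so the integrals converge and the sign is preserved.

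For part (d), this is a first-order Taylor/convexity estimate in the $x'$ and $t$ variables combined. Fix $(x',y,t)$ with $(u-\psi)(x',y,t) \geq h$. On the slice at height $y$, the function $v(z',s) := (u-\psi)(z',y,s)$ satisfies $v_{\zeta\zeta} \geq -C_0$ for unit vectors $\zeta$ in $(x',t)$ — this uses semi-convexity of $u$ in $x'$ (the $\|u_{\tau\tau}^-\|$ and $\|\psi_{\tau\tau}\|$ contributions to $C_0$) and the bound on $u_t$ and $\psi_t$ for the time direction (here one uses the one-sided Lipschitz/semiconcavity-type control, effectively $|v_s| \leq \|u_t\|_\infty + \|\psi_t\|_\infty$, absorbed into $C_0$). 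Given such semiconvexity, for any $(z',s) \in Q'_\rho(x',0,t)$ we have the lower Taylor bound $v(z',s) \geq v(x',s) + \nabla_{x'}v(x',y,s)\cdot(z'-x') - \tfrac{C_0}{2}|z'-x'|^2$ and a comparable estimate handling the $s$-variation, giving $v(z',s) \geq v(x',t) + \nabla_{x'}(u-\psi)(x',y,t)\cdot(z'-x') - C_0\rho^2$. Restricting to the half-cylinder where $\nabla_{x'}u(x',y,t)\cdot(z'-x') \geq 0$ — and using that $\nabla_{x'}\psi\cdot(z'-x')$ is also controlled, or more simply that on the coincidence set the relevant gradient is the one displayed — kills the first-order term's bad sign, leaving $(u-\psi)(z',y,s) \geq h - C_0\rho^2$.

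The step I expect to be the genuine obstacle is part (d): making the passage from pointwise semi-convexity in $(x',t)$ to the clean half-cylinder estimate rigorous requires care about which gradient ($\nabla_{x'}u$ versus $\nabla_{x'}(u-\psi)$) sits in the definition of $HQ'_\rho$, and about handling the time variable, where one only has Lipschitz control on $u_t$ rather than a two-sided second derivative bound; I would reconcile this by noting that the quadratic-in-$\rho$ error term can absorb the linear-in-$\rho$ time variation up to enlarging $C_0$, or by invoking the quasi-convexity statement (iii) directly in the $(x',t)$ variables when $(u-\psi)|_{t=0}>0$, which gives exactly $v_{\zeta\zeta}\geq -C_0$ for $\zeta$ in $(x',t)$. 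Parts (a)–(c) are essentially bookkeeping once the interior regularity needed to differentiate the equation classically away from $\{y=0\}$ is in hand.
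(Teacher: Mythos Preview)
Your proposal is correct and is exactly the content the paper leaves implicit: the paper's entire proof reads ``It is an immediate consequence of Lemma~\ref{a_priori},'' and what you have written is precisely the unpacking of that sentence --- (a) is the definition of semi-convexity, (b) comes from rewriting the equation as $\partial_y(y^\gamma u_y)=y^\gamma(u_t-\Delta_{x'}u)$ and bounding the right-hand side, (c) is two integrations of (b), and (d) is the quasi-convexity (iii) of Lemma~\ref{a_priori} in the $(x',t)$ variables. Your observation that the exponent in (c) should be $y^2$ rather than $y^\gamma$ is correct (the paper's later use of the inequality, e.g.\ in Theorem~\ref{below}, confirms the quadratic bound), and your instinct to invoke quasi-convexity (iii) directly for (d) rather than patch together separate $x'$- and $t$-estimates is the cleaner route.
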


\begin{proof}
It is an immediate consequence of Lemma \ref{a_priori}.
\end{proof}

 Set $w(x',y,t):=y^\gamma u_y(x'y,t)$. Since $\partial_y(y^\gamma u_y(x',y,t))\leq nC_0y^\gamma$ 
the following limit is well defined i.e $$\lim_{y\to0^+}w(x',y,t)=:w(x',0,t)\leq 0.$$


\begin{thm}\label{thm3.1}
Under the above assumptions, near a free boundary point, $w$ is H\"{o}lder continuous whose exponent depends on $L^\infty$, the Lipschitz constant of the solution, the obstacle $\psi$ and on $C_0$. 
\end{thm}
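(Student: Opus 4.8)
The plan is to prove H\"older continuity of $w(x',y,t)=y^\g u_y$ near a free boundary point by a normalized iteration (a De Giorgi--Nash--Moser--type oscillation decay) adapted to the degenerate parabolic setting, exploiting the fundamental one-sided bound $\partial_y(y^\g u_y)\le nC_0 y^\g$ of Lemma~\ref{lemma3.1}(b) together with the sign condition $w\le 0$ on $\{y=0\}$. The key observation is that $w$ itself satisfies a degenerate parabolic equation with a weight in $A_2$: differentiating the equation $\div(y^\g\nabla u)-y^\g\partial_t u=0$ in $y$ and rewriting in terms of $w$, one checks that $v:=y^{-\g}w=u_y$ solves $\div(y^{-\g}\nabla v)-y^{-\g}\partial_t v=0$ in the region where $u>\psi$, i.e. away from the contact set; here $y^{-\g}=y^{\g'}$ with $\g'=-\g\in(-1,1)$ is again an $A_2$ weight, so Propositions~\ref{Harnack_dpo} and~\ref{Harnack_f} apply. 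On the contact set $u=\psi$ the solution is as regular as $\psi$, so the only difficulty is matching the behavior across the free boundary; this is where the one-sided concavity-type bound (b) and the semiconvexity/quasiconvexity from Lemma~\ref{a_priori} enter decisively.

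First I would set up the dichotomy at a free boundary point $(0,0,0)$ and a scale $\rho$: either the contact set $\{u=\psi\}\cap Q'_\rho$ occupies a definite portion of $Q'_\rho$ (positive parabolic density), in which case the boundary sign condition $w\le 0$ plus the upper bound (b) force $w$ to be uniformly small (of order $\rho^{\g}$ or better after rescaling) by a De Giorgi-type ``measure-to-pointwise'' argument in the spirit of Proposition~\ref{prop2.4}; or the positivity set $\{u>\psi\}$ dominates, in which case $w$ solves the homogeneous degenerate equation in a large sub-cylinder and Harnack/oscillation decay (Proposition~\ref{Harnack_dpo}) gives $\osc_{Q_{\rho/2}} w\le \theta\,\osc_{Q_\rho}w$ with $\theta<1$. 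The crucial monotone structure is that $w$ has a one-sided bound in $y$: from (b), the function $y\mapsto y^\g u_y - nC_0\frac{y^{1+\g}}{1+\g}$ is nonincreasing, so $w(x',y,t)\le nC_0\frac{y^{1+\g}}{1+\g}$, and combined with $w(x',0,t)\le0$ this pins the positive part of $w$ at scale $\rho$. For the negative part one uses that on the contact set $w = y^\g\psi_y$-type quantities are controlled, and that $u-\psi\ge 0$ with the Hopf/strong-maximum-principle content of the Harnack inequality.

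The key steps in order: (1) derive the equation satisfied by $v=u_y$ with the conjugate weight $y^{-\g}$ and record that it is uniformly degenerate-parabolic in $\{u>\psi\}$; (2) establish the clean one-sided bounds $0\ge w(x',0,t)$ and $w(x',y,t)\le cC_0 y^{1+\g}$ from Lemma~\ref{lemma3.1}, reducing matters to controlling $-w$ from below; (3) prove an oscillation-decay lemma at a free boundary point: after the parabolic rescaling $u_\rho(x',y,t)=\rho^{-1-\g}u(\rho x',\rho y,\rho^2 t)$ (which preserves the equation, the weight, and the class of admissible obstacles up to rescaling constants that stay bounded because of the uniform bounds in Lemma~\ref{a_priori}), split into the two density regimes above and in each get a fixed fraction decay of $\osc w$; (4) iterate the decay over dyadic scales to obtain $\osc_{Q_\rho}w\le C\rho^{\a}$ with $\a\in(0,1)$ depending only on the listed data, which is the desired H\"older continuity; (5) combine with interior Schauder-type estimates away from $\{y=0\}$ to upgrade to joint H\"older continuity in $(x',y,t)$ in a full neighborhood.

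The main obstacle I expect is step~(3) in the regime where the contact set has positive density: one must quantify that the Signorini-type boundary condition ($w\le 0$ everywhere on $\{y=0\}$, $w=0$ where $u>\psi$) together with the upper barrier $w\le cC_0 y^{1+\g}$ actually drives the \emph{infimum} of $w$ up by a definite amount at the next scale. This is a genuinely one-sided (obstacle-type) argument rather than a textbook Harnack application: the natural route is to build an explicit supersolution/barrier for $-w$ that vanishes to the right order on the portion of $\{y=0\}$ lying in the contact set and to use the measure-theoretic lower bound on that portion via Proposition~\ref{weighted_imb_thm} or Proposition~\ref{prop2.4}. Carefully tracking that all constants depend only on $\|u\|_{L^\infty}$, the Lipschitz constant, $\psi$, and $C_0$ — and in particular survive the rescaling in step~(3) — is the bookkeeping that makes the statement precise, but the conceptual difficulty is concentrated in producing that barrier and closing the iteration.
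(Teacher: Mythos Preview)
Your dichotomy is the wrong organizing principle, and Case~A as you describe it is simply false. On the contact set $\{u=\psi\}\cap\{y=0\}$ the value of $w(x',0,t)=\lim_{y\to0^+}y^\g u_y$ is \emph{not} determined: it is the free Neumann data and satisfies only $w\le 0$. Knowing that the contact set has large measure therefore gives you no lower bound on $w$ whatsoever; the combination ``$w\le 0$ on $\{y=0\}$'' plus ``$\partial_y w\le nC_0y^\g$'' yields only an \emph{upper} bound $w\le \tfrac{nC_0}{1+\g}y^{1+\g}$, which is the easy direction. You have the helpful and unhelpful cases reversed: it is where $u>\psi$ that $w=0$, and that is the information one must exploit. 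Your Case~B also has a gap: large \emph{measure} of $\{u>\psi\}$ on $\{y=0\}$ does not produce a large sub-cylinder on which $w$ extends across $\{y=0\}$ as a solution of $L_{-\g}$, so you cannot invoke Harnack/oscillation decay directly.

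The paper does not use a density alternative at all. Instead it runs a maximum-principle argument with an explicit quadratic barrier: after normalizing so that $\inf_{\Gamma_1}\overline w\ge -1$, one sets
\[
v=\overline u-\Bigl[\overline\psi(0,0)+\nabla_{x'}\overline\psi(0,0)\cdot x'+c\Bigl(|x'|^2-\tfrac{2n-1}{2(1+\g)}y^2-t\Bigr)\Bigr],
\]
which is an $L_\g$-solution with $v(0,0,0)=0$. On $\partial_p\Gamma_{1/2}\cap\{y=0\}$ either $\overline u=\overline\psi$ (so $v<0$) or $\overline w=0$; hence the nonnegative maximum of $v$ sits on the \emph{lateral} or \emph{top} parabolic boundary. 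At that maximum point one uses the \emph{half-cylinder estimate} of Lemma~\ref{lemma3.1}(d)---this is the semiconvexity input you omitted---to propagate a lower bound on $\overline u-\overline\psi$ to a full half-cylinder $HQ'_{1/2}$, and then ``shoots back'' to $y=0$ via the monotonicity $\partial_y\overline w\le nC_0'y^\g$ to conclude $\overline w(x',0,t)\ge -\tfrac12$ on $HQ'_{1/2}$. With $|\overline w|\le 1$ in $\Gamma_1$ and $|\overline w|\le\tfrac12$ on half of the base, the Poisson representation for the conjugate equation gives $|\overline w|\le\theta<1$ at interior height, and one shoots back once more to close the induction $\inf_{\Gamma_{4^{-k}}}w\ge -C\mu^k$. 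The mechanism that guarantees a \emph{half}-space worth of good boundary data is semiconvexity (the gradient direction at the max point), not any density hypothesis; this is the idea your outline is missing.
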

\begin{proof}
Assuming $(0,0,0)$ a free boundary point, it is enough to prove 
\begin{equation}\label{eq3.0}
\inf_{\Gamma_r}w\geq -Cr^\alpha
\end{equation}
where $\Gamma_r:=Q'_r\times[0,\frac{\sqrt{1+\gamma}}{2n}r)$.  In fact we prove that there exists $C>0$ and $0<\mu<1$ depending only on the $L^\infty$ bound of the solution and the obstacle, on the $L^\infty$  bound of their space derivatives and on $C_0$ such that
\begin{equation}\label{eq3.1}
\inf_{\Gamma_{4^{-k}}} w\geq -C\mu^k, \ \ \ \forall k\in \mathbb{N}.
\end{equation}
By induction we assume that (\ref{eq3.1}) is true for every $k\leq k_0$ i. e
$$\inf_{\Gamma_{4^{-k_0}}} w\geq -C\mu^{k_0}$$
for some constant $C>0$ and $0<\mu<1$ to be chosen. Normalize the solution to $\Gamma_1$ by setting 
$$\overline{u}(x',y,t):=C^{-1}\left(\frac{\mu}{4^{1-\gamma}}\right)^{-k_0}u(4^{-k_0}x',4^{-k_0}y,4^{-k_0}t).$$
Then
\begin{flalign*}
(i) & \ \inf_{\Gamma_1}\overline{w}\geq -1&&\\ \nonumber
            \text{and} &  &&\\ \nonumber 
(ii) & \ \partial_y \overline{w}\leq nC_0C^{-1}(4^{1+\gamma}\mu)^{-k_0}y^\gamma &&\\ \nonumber 
& \text{(We can choose $k^*$so that $C_0C^{-1}=C_1\mu^{k^*}$, $0<\mu<1$).}
\end{flalign*}

The idea, now, is (see \cite{Athparabolic} and for the elliptic case \cite{Caff79}, \cite{AC}, \cite{MS}  and recently \cite{CFi}) to locate a large region of points $(x',0,t)$ where $|\overline{w}|$ is small (average estimate). If $(\overline{u}-\overline{\psi})$ were actually convex then we would have $\overline{w}\equiv 0$ in at least half a cylinder since $(0,0,0)$ is a free boundary point. Then the appropriate Poisson formula would imply that $|\overline{w}|<1$ in the interior and away from the hyperplane $y=0$. Hence by property $(ii)$ above we would "shoot back" to $y=0$ to have the desired result. Since we are not in this situation we have to correct the argument. 

\textit{Step I (Average estimate):} Set 
$$v(x',y,t):=\overline{u}(x',y,t)-\left[\overline{\psi}(0,0)+\grad_{x'}\overline{\psi}(0,0)\cdot x'+4C_1(4^{1+\gamma}\mu)^{-k_0}\left(|x'|^2-\frac{2n-1}{2(1+\gamma)}y^2-t\right)\right]$$
where $$\overline{\psi}(x',t)=C^{-1}\left(\frac{\mu}{4^{1+\gamma}}\right)^{-k_0}\psi(4^{-k_0}x',4^{-2k_0}t).$$
The function $v$ satisfies equation in (\ref{FBP3}). So by maximum principle applied in $\Gamma_{1/2}$ its nonnegative maximum ($v(0,0,0)=0$) is attained on $\partial_p\Gamma$. Observe that on $\partial_p\Gamma_{1/2}\cap\{y=0\}$ either $\overline{u}=\overline{\psi}$
 or $\overline{u}>\overline{\psi}$ i.e $v<0$ or $\overline{w}\geq 0$, respectively. Hence its maximum must occur on $\partial_p\Gamma_{1/2}$. There appear three cases:

\underline{Case 1:}  The maximum is attained "far from $y=0$" i.e at $\left(x_0',\frac{\sqrt{1+\gamma}}{4n},t_0\right)$.

In this case 
$$(\overline{u}-\overline{\psi})(x_0',\frac{\sqrt{1+\gamma}}{4n},t_0)\geq -\frac{1}{4n}C_1\mu^{k^*}(4^{1+\gamma}\mu)^{-k_0}$$
and by the half-cylinder estimate
$$(\overline{u}-\overline{\psi})(x',\frac{\sqrt{1+\gamma}}{4n},t_0)\geq -\frac{1}{2}C_1\mu^{k^*}(4^{1+\gamma}\mu)^{-k_0}$$
for all $(x',t)\in HQ'_{1/2}(x_0',\frac{\sqrt{1+\gamma}}{4n},t_0)$. "Shooting back" to $y=0$ we observe that $(\overline{u}-\overline{\psi})(x',0,t)>0$ then $\overline{w}(x',0,t)=0$ but when $(\overline{u}-\overline{\psi})(x',0,t)=0$ the above estimate together with property $(ii)$ yields
$$\overline{w}(x',0,t)\geq -\frac{1+16n}{4\sqrt{1+\gamma}}\left(\frac{1+\gamma}{16n^2}\right)^{\gamma/2} C_1\mu^{k^*}(4^{1+\gamma}\mu)^{-k_0}$$
for all $(x',t)\in HQ'_{1/2}(x_0',0,t_0)$.

\underline{Case 2:} The maximum is attained on the lateral sides of $\Gamma_{1/2}$, say at $(x_0',y_0,t_0)$ where $|x_0'|=\frac{1}{2}$.

By the choice of the width of $\Gamma_{1/2}$ we have that 

$$(\overline{u}-\overline{\psi})(x_0',y_0,t_0)\geq \frac{3n-1}{4n}C_1\mu^{k^*}(4^{1+\gamma}\mu)^{-k_0}$$
and by the half-cylinder estimate

$$(\overline{u}-\overline{\psi})(x',y_0,t_0)\geq \frac{2n-1}{4n}C_1\mu^{k^*}(4^{1+\gamma}\mu)^{-k_0}$$
for all $(x',t)\in HQ'_{1/2}(x_0',0,t_0)$. "Shooting back" to $y=0$ we see that by property $(ii)$ we always have that $(\overline{u}-\overline{\psi})(x',0,t)>0$ i.e. 
$$\overline{w}(x',0,t)=0 \ \ \ \text{for all} \ (x',t)\in HQ'_{1/2}(x_0',0,t_0).$$

\underline{Case 3:} The maximum is attained on the bottom of $\Gamma_{1/2}$, say at $(x_0',y_0,-\frac{1}{4})$.  

Again, by the choice of the width of $\Gamma_{1/2}$, we arrive at the same result as is Case 2.

Thus, a fortiori, we have reached at the same conclusion:
$$\textit{In the half cylinder} \ HQ'_{1/2} \ \textit{centered at some point of}\ Q'_{1/2}, \overline{w}(x',0,t)\geq -C_2\mu^{k^*}(4\mu)^{-k_0}.$$

\textit{Step II (Pointwise estimate):} 

Now, choose $\mu\geq\frac{1}{4^{1+\gamma}}$ then
$$0\geq \overline{w}(x',0,t)\geq -C_2\left(\frac{1}{2}\right)^{k^*}\geq -\frac{1}{2}$$
for an appropriate $k^*$. Since $|\overline{w}|\leq 1$ in $\Gamma_1$ and $\overline{w}$ is a solution to the conjugate equation of $\overline{u}$, the Poisson representation formula implies $$|\overline{w}|\leq \theta<1$$ 
in the interior of the domain 
$$D:=\left\{|x'|\leq \frac{1}{2}, \ y=\frac{1}{2}, \ -\frac{1}{4}\leq t\leq 0\right\}.$$
Applying property $(ii)$ once more we show that
$$|\overline{w}|<\theta+C_3\mu^{k^*}(4^{1+\gamma}\mu)^{-k_0}\leq \theta+C_32^{-k^*}=:\mu<1$$
for $(x',y,t)\in \Gamma_{1/2}$ and the proof is complete.

\end{proof}

The next theorem shows that an estimate of the form (\ref{eq3.0}) is enough in order to obtain the following H\"{o}lder estimate. 

\begin{thm}\label{below}
Let $u$ be a solution to (\ref{FBP3}) and $(0,0,0)$ is a free boundary point. Fix $K>0$, $\alpha\in(0,1)$ and assume that 
\begin{equation}\label{below1}
\inf_{\widetilde{Q}_r}y^\gamma u_y\geq -Kr^\alpha
\end{equation}
where $\widetilde{Q}_r:=B_{C_1r}(0)\times[-C_2r,C_2r]\times(-(C_3r)^2,0]$, $r\in(0,1]$ and $C_1$, $C_2$, $C_3$ are fixed positive constants. Then there exists a positive constant $M$ depending on $\gamma$ and $K$ such that 
\begin{equation}\label{below2}
\sup_{\widetilde{Q}_{r/2}}|u-\psi|\leq Mr^{\alpha+2s}.
\end{equation}
\end{thm}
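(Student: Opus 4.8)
The plan is to run a contradiction/compactness argument, reducing matters to the growth estimate \eqref{below1} on the normal flux. Suppose \eqref{below2} fails; then by a standard iteration we obtain a sequence of radii $r_j\to 0$ along which $\sup_{\widetilde{Q}_{r_j/2}}|u-\psi|$ is much larger than $M r_j^{\alpha+2s}$, while \eqref{below1} holds at every scale. Rescale by setting
\[
v_j(x',y,t):=\frac{(u-\psi)(r_j x',r_j y,r_j^2 t)}{\rho_j},\qquad \rho_j:=\sup_{\widetilde{Q}_{r_j/2}}|u-\psi|,
\]
so that $\sup_{\widetilde{Q}_{1/2}}|v_j|=1$ after renormalizing the geometry, and $v_j$ solves $\div(y^\gamma\nabla v_j)-y^\gamma\partial_t v_j=0$ (recall $f=0$ after the reduction, and $\psi$ is subtracted up to a smooth caloric correction whose contribution is lower order). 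The key point is that \eqref{below1}, rescaled, becomes $\inf_{\widetilde{Q}_1} y^\gamma (v_j)_y \geq -K r_j^\alpha \rho_j^{-1} \to 0$ because $\rho_j\gg r_j^{\alpha+2s}\gg r_j^{\alpha}$ (using $s>0$); so in the limit the normal derivative flux of $v_j$ from above is asymptotically non-negative, and since by the Signorini condition it is always $\le 0$, the limiting trace flux vanishes on $\{y=0\}$.

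Next I would pass to a limit. Using the $A_2$-weighted energy estimates and the interior regularity for \eqref{def_1} (Proposition~\ref{prop2.4}, Proposition~\ref{Harnack_dpo}, plus the Hölder continuity of the space derivatives established in Theorem~\ref{thm3.1}), the $v_j$ are uniformly bounded in the relevant weighted parabolic spaces and locally equi-Hölder away from nothing worse than $\{y=0\}$, so a subsequence converges locally uniformly to a limit $v_\infty$ solving $\div(y^\gamma\nabla v_\infty)-y^\gamma\partial_t v_\infty=0$ in $\widetilde{Q}_1\cap\{y>0\}$ with $\lim_{y\to 0^+} y^\gamma (v_\infty)_y=0$ on $\{y=0\}$, hence (by even reflection across $\{y=0\}$, valid for the $|y|^\gamma$ weight since $2s=1-\gamma$) an honest caloric-type solution of the degenerate equation in a full neighborhood of the origin. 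Moreover $v_\infty(0,0,0)=0$ because $(0,0,0)$ is a free boundary point of the original problem and the obstacle was subtracted off, and $\sup_{\widetilde{Q}_{1/2}}|v_\infty|=1$ so $v_\infty\not\equiv 0$.

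Finally I would derive the contradiction from a growth/Liouville-type bound: for the even-reflected degenerate caloric function $v_\infty$, combined with the semiconvexity/quasiconvexity inherited in the limit (Lemma~\ref{a_priori}(ii)--(iii), which survives rescaling since it is scale invariant up to the $\rho_j^{-1}r_j^2$ factor $\to 0$), one shows $v_\infty$ must vanish to order at least $\alpha+2s>2$ at the origin in the parabolic scaling, i.e. $\sup_{\widetilde{Q}_\rho}|v_\infty|\le C\rho^{\alpha+2s}$ for all small $\rho$; but this forces $v_\infty$ to be a polynomial of parabolic degree $\ge \alpha+2s$ in $(x',t)$ that is caloric, even in $y$, and vanishes at the origin, and the constraint $v_\infty\ge$ (limit of obstacle corrections) $\ge 0$ on $\{y=0\}$ together with the flux condition pins it down to be $\equiv 0$ — contradicting $\sup_{\widetilde{Q}_{1/2}}|v_\infty|=1$. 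The main obstacle is the last step: making rigorous the classification of the blow-up limit, i.e. showing that a degenerate-caloric function which is even in $y$, vanishes at the origin, is globally sublinear-after-quadratic, nonnegative on the thin space, and has vanishing weighted normal flux, must be identically zero; this is where the precise exponent $2s=1-\gamma$ and the semiconvexity estimate do the real work, and one has to be careful that the subtraction of the (non-caloric) obstacle $\psi$ only produces errors that scale away.
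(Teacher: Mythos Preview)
The paper's argument is much more direct than your compactness route. Set $v=u-\psi$; integrating \eqref{below1} in $y$ gives $v(x',y,t)\ge -\tfrac{K}{2s}r^{\alpha+2s}$ on $\widetilde{Q}_r$, which already yields the lower half of \eqref{below2}. For the upper bound the paper argues by contradiction at a \emph{fixed} scale $r$: if $v(0,0,t)\ge Mr^{\alpha+2s}$ for some $t$ in the past, apply the weighted parabolic Harnack inequality (Proposition~\ref{Harnack_dpo}) to the nonnegative shift $V=v+\tfrac{K}{2s}r^{\alpha+2s}$ to propagate this largeness forward in time to a point at height $y\sim r$; then use the normal semi-concavity bound $v(x',y,0)-v(x',0,0)\le Cy^2$ from Lemma~\ref{lemma3.1} to return to $y=0$, contradicting $v(0,0,0)=0$ once $M$ is large. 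No blow-up sequence, no classification of limits.

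Your proposal has a genuine gap at the Liouville step, which you yourself flag as the ``main obstacle'' but then sketch incorrectly. First a minor point: under your rescaling the weighted flux picks up a factor $r_j^{1-\gamma}=r_j^{2s}$, so the rescaled bound is $y^\gamma(v_j)_y\ge -K r_j^{\alpha+2s}\rho_j^{-1}\to 0$, not $-K r_j^\alpha\rho_j^{-1}$; the conclusion is right but your justification ``$r_j^{\alpha+2s}\gg r_j^\alpha$'' is backwards for $r_j<1$. More seriously, after normalizing $\sup_{\widetilde{Q}_{1/2}}|v_j|=1$ the limit $v_\infty$ has $\sup_{\widetilde{Q}_{1/2}}|v_\infty|=1$, so it does \emph{not} vanish to high order at the origin as you claim (and $\alpha+2s$ need not exceed $2$ anyway). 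Moreover $v_\infty$ lives only on the bounded cylinder $\widetilde{Q}_1$: you have not arranged control on a growing family of scales, so there is no global problem to which a Liouville theorem could apply. A bounded degenerate-caloric function on $\widetilde{Q}_1$, even in $y$, with vanishing weighted Neumann data, $v_\infty(0)=0$, and $v_\infty\ge 0$ on $\{y=0\}$, is certainly not forced to be identically zero. To make a compactness argument work you would need, at minimum, a monotonicity or doubling statement giving uniform control of $\sup_{\widetilde{Q}_\rho}|v_j|$ over \emph{all} $\rho$ before passing to the limit; the paper avoids all of this by exploiting Harnack plus semi-concavity at a single scale.
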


\begin{proof}
Set $v=u-\psi$. First observe that the lower bound in (\ref{below2}) is immediate due to (\ref{below1}) after an integration along $y-$directions. To prove the upper bound, it is enough to show that 
$$v(0,0,-r^2)\leq M r^{\alpha+2s}.$$
Assuming the contrary,  let  $(0,0,t)\in \widetilde{Q}^-_{r/2}$, for $r\in(0,\frac{1}{2})$ since $v$ is bounded, be a point  for which $v(0,0,t)\geq Mr^{\alpha+2s}$ for some large constant $M$, to be chosen. The fact that $$v(x',y,t)\geq v(x',0,t)-K\int_{0}^y\frac{r^\alpha}{\xi^\gamma}d\xi\geq -K\frac{r^{\alpha+2s}}{2s}$$
for every $(x,y,t)\in \widetilde{Q}_r$, implies that $V:=v+K\frac{r^{\alpha+2s}}{2s}$ is non-negative in $\widetilde{Q}_r$. Hence by Proposition \ref{Harnack_dpo} we obtain $$Mr^{\alpha+2s}\leq \sup_{\widetilde{Q}^-_{r/2}}v+K\frac{r^{\alpha+2s}}{2s}\leq C\inf_{\widetilde{Q}^+_{r/4}}v+K\frac{r^{\alpha+2s}}{2s}.$$
That is $$v(0,c_2r,t')\geq c_0Mr^{\alpha+2s}-K\frac{r^{\alpha+2s}}{2s}$$
for some dimensional constant $c_0>0$ and every $t'\in(-\frac{r^2}{4^3},0]$. In addition, for $t_0$ fixed and every $x'\in\{v(x',0,t_0)=0\}$, it holds
$$v(x',y,t_0)-v(x',0,t_0)\leq Cy^2$$
where $C$ depends on $n$, $\gamma$ and the bound of the semiconvexity property. Therefore we obtain, 
$$0=v(0,0,0)\geq v(0,C_2r,0)-Cr^2\geq c_0Mr^{\alpha+2s}-K\frac{r^{\alpha+2s}}{2s}-Cr^2$$
and a contradiction follows for $M$ large enough.

\end{proof}

\section{Optimal regularity of the space derivative}\label{SectionOptimal}

In the present section, we will obtain the optimal space regularity, following the idea presented in \cite{ACM1}, as a consequence of  a parabolic monotonicity formula. For simplicity we take the origin to be a free boundary point. We say that $u$ is $L_\gamma$-caloric if 

\begin{equation}\label{Hg}
L_\gamma u:=\text{div}(|y|^\gamma\grad u)-|y|^\gamma\partial_tu=0
\end{equation}

\begin{preremark} If $u$ is $L_\gamma$-caloric then $y^\gamma u_y$ is $L_{-\gamma}$-caloric.
\end{preremark}

In the present section, as it was done in \cite{ACM1}, subtract the obstacle from our solution in (\ref{FBP3}) which we still denote by $u$. Then our problem becomes

\begin{align}
\begin{cases} \label{FBP4sub}
\div (y^\g \nabla u)-y^\g\partial _t u = f ,\ \ & \text{for} \ (x',y,t) \in \mathbb{R}^{n}_+ \times (0,T] \\
u(x',0,t) \geq 0, \ \ & \text{for} \ (x',t) \in \mathbb{R}^{n-1} \times (0,T] \\
\lim_{y \to 0^+} y^\g \partial _y u(x',y,t)=0, \ \ & \text{for} \ u(x',0,t) > 0 \\
\lim_{y \to 0^+} y^\g \partial _y u(x',y,t) \leq 0,\ \ & \text{for} \ (x',y,t) \in \mathbb{R}^{n}_+ \times (0,T] \\
u(x',0,0)=\phi (x')-\psi(x'),\ \ & \text{for} \ x' \in \R ^{n-1} \end{cases}
\end{align}
where $\phi(x',t)$, $\psi(x',t)$ are smooth functions, satisfying suitable compatibility conditions and with no loss of generality $f(x',y,t):=-L_\gamma\tilde{\psi}(x',t)$, for a proper extension of the smooth obstacle $\psi$ to the whole $\R_+^n$, is independent of the $y$ variable.

The proof of the monotonicity result relies on the following eigenvalue problem. For the proof, see for instance \cite{AC} when $\gamma=0$ or \cite{CFi} for $-1< \gamma<1$. 

\begin{lemma}\label{eigenvalue}
Set $$\lambda_0=\inf_{\substack{w\in H^1(\R^n_+) \\ w=0\ \text{on}\ \R^{n-1}_-}}\frac{\int_{\R^n_{+}}y^{-\gamma}|\grad w(z,-1)|^2e^{-\frac{|z|^2}{4}}dz}{\int_{\R^n_{+}}y^{-\gamma}w^2(z,-1)e^{-\frac{|z|^2}{4}}dz},$$
where $$\R^n_+:=\{z=(x,y)\in \R^n : y>0\}$$
and 
$$\R^{n-1}_-:=\{(x,0):x\in \R^{n-1},\ x_{n-1}<0\}.$$
Then $\lambda_0(1-s)s$.
\end{lemma}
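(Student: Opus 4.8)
The plan is to identify $\lambda_0$ as the bottom of the spectrum of the weighted Ornstein--Uhlenbeck operator
$$\mathcal A w \;:=\; -\,\frac{1}{y^{-\gamma}e^{-|z|^{2}/4}}\,\div\!\big(y^{-\gamma}e^{-|z|^{2}/4}\,\grad w\big)\;=\;-\,\frac{1}{y^{-\gamma}}\div\!\big(y^{-\gamma}\grad w\big)\;+\;\tfrac12\,z\cdot\grad w$$
on $\R^n_+$, acting on functions with $w=0$ on $\R^{n-1}_-$ and with the natural (weighted Neumann) condition $\lim_{y\to0^{+}}y^{-\gamma}\partial_y w=0$ on the complementary part of $\{y=0\}$; the quotient in the statement is exactly its Rayleigh quotient (read off, in the self--similar parabolic picture, on the slice $\{t=-1\}$). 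Since $e^{-|z|^{2}/4}$ provides the Gaussian confinement that makes the embedding of the weighted energy space into $L^{2}(\R^n_+,\,y^{-\gamma}e^{-|z|^{2}/4})$ compact, the direct method produces a minimizer $w_{0}$; replacing $w_{0}$ by $|w_{0}|$ (still admissible, with no larger energy) we may take $w_{0}\ge 0$, and $w_{0}$ solves $\mathcal A w_{0}=\lambda_{0}w_{0}$.

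First I would peel off the free variables. Because the Gaussian factorises, $\mathcal A$ splits as the analogous operator in the two variables $(x_{n-1},y)$ plus one--dimensional Hermite operators $-\partial_{x_j}^{2}+\tfrac12 x_j\partial_{x_j}$ in $x_{1},\dots,x_{n-2}$, whose spectrum is $\{0,\tfrac12,1,\dots\}$. Since the slit $\R^{n-1}_{-}$ only constrains $(x_{n-1},y)$, the minimizer is constant in $x_{1},\dots,x_{n-2}$ and $\lambda_{0}$ equals the first eigenvalue of the two--dimensional problem on the half--plane $\{(x_{n-1},y):y>0\}$. Passing to polar coordinates $x_{n-1}+iy=\rho e^{i\theta}$, $\theta\in(0,\pi)$ — so that the Dirichlet slit is $\theta=\pi$ and the weighted Neumann part is $\theta=0$ — the self--similarity of $e^{-\rho^{2}/4}$ forces the ground state to be homogeneous, $w_{0}(\rho,\theta)=\rho^{\kappa_{0}}g(\theta)$, with $\lambda_{0}=\kappa_{0}/2$ and $g$ solving the singular Sturm--Liouville problem
$$\big((\sin\theta)^{-\gamma}g'\big)'+\kappa_{0}(\kappa_{0}-\gamma)(\sin\theta)^{-\gamma}g=0\ \ \text{on}\ (0,\pi),\qquad g(\pi)=0,\qquad \lim_{\theta\to0^{+}}(\sin\theta)^{-\gamma}g'(\theta)=0 .$$

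This angular equation is the heart of the matter. The substitution $t=\cos\theta$ turns it into a Jacobi (hypergeometric) equation whose first eigenfunction is explicit — it is $g=\cos(\theta/2)$, $\kappa_{0}=1/2$, when $\gamma=0$, and a hypergeometric function in general — and strictly positive on $(0,\pi)$; equivalently, one checks directly that $y^{\gamma}\partial_{y}p$, where $p$ is the $L_{\gamma}$--harmonic regular blow--up profile (homogeneous of degree $1+s$) of the obstacle solution, is $L_{-\gamma}$--harmonic, homogeneous of degree $1-s$, vanishes on $\R^{n-1}_{-}$, and is sign--definite, so that $w_{0}=y^{\gamma}\partial_{y}p$ up to normalization and $\kappa_{0}=1-s$. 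Feeding $\kappa_{0}=1-s$ (equivalently $\kappa_{0}(\kappa_{0}-\gamma)=s(1-s)$) into $\lambda_{0}=\kappa_{0}/2$ — or, directly, into the angular eigenvalue $\kappa_0(\kappa_0-\gamma)$ — yields the value of $\lambda_{0}$ asserted in the statement.

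It remains to justify minimality, i.e.\ that the homogeneous pair above realises the infimum rather than merely some eigenvalue. This follows from the fact that the candidate is sign--definite on $\R^{n}_{+}$ (equivalently $g>0$ on $(0,\pi)$): a positive eigenfunction of the self--adjoint operator $\mathcal A$ with these mixed Dirichlet/weighted--Neumann conditions is a ground state, either by the Sturm oscillation theorem applied to the angular ODE or by Krein--Rutman applied to the order--preserving semigroup $e^{-t\mathcal A}$, so its eigenvalue equals $\lambda_{0}$. The two points that require genuine care are precisely this one and the explicit resolution of the angular ODE: one must exclude any admissible homogeneity strictly below $1-s$, which is exactly where the Dirichlet condition on $\R^{n-1}_{-}$, the weighted Neumann condition on its complement, and the positivity of $g$ enter decisively; existence of the minimizer, the separation of variables, and the reduction to the ODE are routine.
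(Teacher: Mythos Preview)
The paper does not give a proof of this lemma at all: it simply refers the reader to \cite{AC} for $\gamma=0$ and to \cite{CFi} for general $\gamma\in(-1,1)$. So there is no argument in the paper to compare against, and your write--up is in fact much more detailed than what the paper provides. Your overall strategy---interpreting $\lambda_0$ as the ground state of the weighted Ornstein--Uhlenbeck operator $\mathcal A$, separating off the unconstrained variables, reducing to the two--dimensional slit problem in polar coordinates, and identifying the minimizer with $y^{\gamma}\partial_y p$ (the $L_{-\gamma}$--harmonic, sign--definite function of homogeneity $1-s$)---is exactly the route taken in the cited references, and the minimality argument via positivity of the eigenfunction is the standard one.

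There is, however, one genuine inconsistency in your final step that you should resolve rather than hedge around. Your own computation shows that a homogeneous $L_{-\gamma}$--harmonic function of degree $\kappa_0$ is an eigenfunction of $\mathcal A$ with eigenvalue $\kappa_0/2$; with $\kappa_0=1-s$ this gives $\lambda_0=(1-s)/2$. The quantity $\kappa_0(\kappa_0-\gamma)=s(1-s)$ that you also invoke is the \emph{angular} (spherical) eigenvalue in the Sturm--Liouville problem for $g$, which is the relevant constant in the elliptic monotonicity formula of \cite{CFi}, not the Gaussian Rayleigh quotient written in the statement. The two coincide only when $\gamma=0$. You cannot get both values out of the same quotient, so either the statement as written contains a typo in the claimed value (and your $(1-s)/2$ is what the Gaussian quotient actually equals---this is consistent with how the lemma is used in the proof of Lemma~\ref{Lemma6non-a}, where one needs precisely $2\lambda_0\ge 1-s$), or the intended quotient is the spherical one. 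Either way, your sentence ``Feeding $\kappa_0=1-s$ \dots into $\lambda_0=\kappa_0/2$ --- or, directly, into the angular eigenvalue $\kappa_0(\kappa_0-\gamma)$ --- yields the value of $\lambda_0$ asserted'' is internally contradictory and should be replaced by a clear statement of which quotient you are computing and what value it takes.
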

We assume that $w$ is a function in $\overline{\R^n_+}\times [-1,0]$ that is $H_{-\gamma}$-caloric in $\R^n_+\times [-1,0]$, where $\R^n_+=\{x=(x',x_n)\in \R^n : x_n>0\}$. We also assume that $w$ has moderate growth at infinity, 
$$\int_{B_R}y^{-\gamma}w^2(z,-1)dz\leq Ce^{\frac{|R|^2}{4+\varepsilon}}$$
for some positive constant $C$, $R$ large and some $\varepsilon>0$. For the present section we assume that $w$ is obtained by solving
$$L_{- \gamma } w = 0, \ \ \ \  \text{in} \ \mathbb{R}^n_+ \times [-1,0] $$
and $ w(x',0,t) = \lim _{y \to 0^+} y^{\gamma } u_y(x',y,t)$. 

From the previous section, we know that $u$ is H\"{o}lder continuous, in particular (\ref{below2}) holds. Our main purpose now is to obtain the optimal space derivative regularity, in the sense that there exists a positive constant $C$ (as in Theorem \ref{thm3.1}) such that around a free boundary point, 
\begin{equation}\label{y-optimal}
\sup_{\widetilde{Q}_{r/2}}|w|\leq Cr^{1-s}.
\end{equation}

We set
\begin{equation}\label{fundamental solution}
G_{\gamma}(x,y,t)= \begin{cases}\frac{c_{n,\gamma }}{t^{\frac{n+ \gamma }{2}}} e^{-\frac{|x|^2+y^2}{4t}} , &  t> 0\cr
 0 &  t\leq 0.
\end{cases}
\end{equation}
 where $\ c_{n, \gamma} = \frac{1}{(4 \pi )^{\frac{n-1}{2}}|\Gamma(\frac{\gamma+1}{2})|}$, where $\Gamma$ here denotes the Gamma function. First we prove a lemma, which uses the normal semi-concavity, the tangential semi-convexity, and the time quasi-convexity. 
\begin{lemma}\label{Lemma5}
Then there exists a $\delta>0$ (depending only on $\alpha$, $s$) and $R_0>0$ small (depending only on $\alpha$, $s$, the semiconvexity constant and the H\"{o}lder constant of $u$)  such that $$(0,0,t)\notin \Gamma(\{w<-r^{\alpha+\delta}\}\cap Q_r')$$
for every $t\in[-r^2,0]$ and $0<r<R_0$. Here $\Gamma(A)$ denotes the convex hull of the set $A$.
\end{lemma}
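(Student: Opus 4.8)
The plan is to argue by contradiction and compactness, in the spirit of the blow-up arguments used for obstacle problems. Suppose the statement fails. Then along a sequence $r_j \to 0$ there are free boundary points $(0,0,t_j)$ with $t_j \in [-r_j^2,0]$ such that $(0,0,t_j)$ lies in the convex hull of $\{w < -r_j^{\alpha+\delta_j}\} \cap Q'_{r_j}$ for a sequence $\delta_j \to 0$ (the dependence of $\delta$ on $\alpha,s$ only will come out of the limiting eigenvalue inequality, so it is cleanest to take $\delta$ fixed small and let the contradiction hypothesis be that \emph{for this $\delta$} the conclusion fails along $r_j\to 0$). I would first reduce to $t_j = 0$ by a parabolic translation in $t$, using that the set of free boundary points is closed and that the statement is local and uniform. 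Then rescale: set
\[
w_j(x,y,t) := \frac{w(r_j x, r_j y, r_j^2 t)}{\rho_j}, \qquad u_j(x,y,t) := \frac{u(r_j x, r_j y, r_j^2 t)}{r_j^{2s}\rho_j},
\]
where $\rho_j$ is chosen as the natural $L^\infty$-type normalization of $w$ over $\widetilde Q_{r_j}$, say $\rho_j := \sup_{\widetilde Q_{r_j}} |w|$, which by Theorem~\ref{below} and the relation $w = y^\gamma u_y$ is comparable to $\sup_{\widetilde Q_{r_j}}|u-\psi|/r_j^{\,2s} \cdot r_j^{\,2s-1}$ and in particular satisfies $\rho_j \gtrsim r_j^{\alpha}$, so that $r_j^{\alpha+\delta}/\rho_j \to 0$. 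The rescaled functions $w_j$ are $L_{-\gamma}$-caloric in $\mathbb{R}^n_+ \times [-1,0]$, uniformly bounded in $Q_1$, hence (by the interior estimates for degenerate parabolic equations of \S\ref{SectionPrel}, e.g. Proposition~\ref{Harnack_dpo} and the De Giorgi–Nash–Moser machinery of \cite{CHSE1}) precompact; pass to a limit $w_0$.

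The second step is to identify the limit and its sign structure. From the obstacle-problem conditions in (\ref{FBP4sub}), $u\ge 0$ on $\{y=0\}$, $w \le 0$ everywhere, $w = 0$ where $u > 0$, so in the limit $w_0 \le 0$ on $\mathbb{R}^n_+\times[-1,0]$ and $w_0 = 0$ on the (nonempty, since $(0,0,0)$ is a free boundary point) contact set. The key quantitative inputs are exactly the three one-sided second-derivative bounds quoted in Lemma~\ref{a_priori} and Lemma~\ref{lemma3.1}: tangential semiconvexity $u_{\tau\tau} \ge -C_0$, the normal inequality $\partial_y(y^\gamma u_y) \le nC_0 y^\gamma$, and time quasi-convexity. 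After the rescaling these one-sided bounds are multiplied by $r_j^2/(r_j^{2s}\rho_j) = r_j^{2-2s}/\rho_j \to 0$ (using $\rho_j \gtrsim r_j^\alpha$ and $2-2s = \gamma+1 > 0 > \alpha$... more precisely one checks $2-2s-\alpha>0$ for $\alpha$ small, which is where the constraint on $\delta$ vs. $\alpha$ enters), so $w_0$ is \emph{genuinely} caloric with no obstacle correction, and the contact set $\{w_0 = 0\}\cap\{y=0\}$ is convex in the $x'$-variables (tangential convexity survives the limit as a two-sided statement on the limit). The contradiction hypothesis says $(0,0,0)$ is in the convex hull of $\{w_0 < 0\}$ on $\{y=0\}$ — but since $w_0 \le 0$, $w_0=0$ on a set whose $x'$-section is convex, and $0$ lies in that contact set, convexity forces a whole neighborhood structure that is incompatible with $(0,0,0)$ being approached by the convex hull of the strict-negativity set, \emph{unless} $w_0$ vanishes identically on a half-space slab through the origin. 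That is the dichotomy to push through.

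The third and decisive step is to rule out the nontrivial alternative via the eigenvalue bound of Lemma~\ref{eigenvalue}. If $w_0 \not\equiv 0$, one shows $w_0$ is homogeneous of some degree $\kappa$ with respect to the parabolic scaling, by the Almgren/Poon-type monotonicity formula (the parabolic frequency function for $L_{-\gamma}$-caloric functions vanishing on a half-hyperplane — this is precisely the "parabolic monotonicity formula similar to the one in \cite{ACM1}" the section advertises), and the Gaussian-weighted Rayleigh quotient in Lemma~\ref{eigenvalue} forces $\kappa \ge 1-s$, in fact the minimal homogeneity compatible with $w_0 \le 0$ vanishing on a half-space is exactly $1-s$ with $w_0$ the explicit half-space profile. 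Then $|w_0(x,y,t)| \gtrsim \dist\big((x,y,t),\{w_0=0\}\big)^{1-s}$ away from the contact set, and translating this back through the normalization gives $w \ge -C\,\rho_j\,(\cdot)^{1-s}$ on $\widetilde Q_{r_j}$, which is a \emph{faster} decay than $r^{\alpha+\delta}$ once $\delta < 1-s-\alpha$ — contradicting that $(0,0,t_j)$ sits in the convex hull of $\{w < -r_j^{\alpha+\delta}\}$, because near a free boundary point the contact set has positive density (here is where the convex-hull formulation, rather than the set itself, is used, together with the half-cylinder estimate (d) of Lemma~\ref{lemma3.1} that propagates positivity of $u-\psi$ in a half-neighborhood).

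The main obstacle I anticipate is the middle step: making rigorous that the limit $w_0$ loses its obstacle correction and inherits a \emph{two-sided} convexity of the contact set from the one-sided bounds, with the bookkeeping of exponents ($2-2s$ vs.\ $\alpha$ vs.\ $\rho_j$) done correctly so that $\delta$ comes out depending only on $\alpha$ and $s$. Closely related is the need to know the correct normalization $\rho_j$ is neither too large (so compactness holds) nor too small (so the contradiction bites); establishing $\rho_j \sim r_j^{\alpha}$ up to constants, or at least the two-sided comparison needed, from Theorem~\ref{below} and the relation $w=y^\gamma u_y$ is the delicate quantitative heart of the argument. The monotonicity/frequency formula itself I would cite or adapt from \cite{ACM1} and \cite{CFi} rather than reprove.
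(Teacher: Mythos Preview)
Your approach is not the paper's, and it carries a structural circularity. The paper's proof of Lemma~\ref{Lemma5} is entirely elementary and takes five lines: if $(x',0,-r^2)$ lies in $\{w<-r^{\alpha+\delta}\}$, integrate $w=y^\gamma u_y$ in $y$ together with the normal semi-concavity bound (b) of Lemma~\ref{lemma3.1} to get
\[
u(x',h,-r^2)\le -\tfrac{1}{2s}r^{\alpha+\delta}h^{2s}+\tfrac{nM}{1+\gamma}h^{2};
\]
use tangential semi-convexity to transfer this upper bound to $u(0,h,-r^2)$ at the cost of $Mr^2$; and compare against the lower bound $u(0,h,-r^2)\ge -Ch^{\alpha+2s}$ coming from the already-proved H\"older estimate of Theorem~\ref{below}. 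Choosing $h=r^{\alpha+m\delta}$ and balancing exponents produces an explicit admissible interval for $\delta$ depending only on $\alpha$ and $s$. No blow-up, no compactness, no frequency formula.

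Your plan instead passes to a blow-up limit and, at the decisive step, invokes ``the parabolic monotonicity formula\ldots\ the section advertises'' to force homogeneity of the limit $w_0$. That is circular in this paper: Lemma~\ref{Lemma5} is a \emph{prerequisite} for the monotonicity Lemma~\ref{Lemma6non-a} --- the truncation $\overline w=-(w+r^{\alpha+\delta})^-$ and the resulting bound on $(w-\overline w)$ that make the monotonicity estimate close rely precisely on the convex-hull exclusion you are trying to prove. You cannot therefore cite the parabolic monotonicity result of \S\ref{SectionOptimal} here. Even if you import an independent frequency formula from \cite{ACM1} or \cite{CFi}, the steps you yourself flag as uncertain (the two-sided contact-set convexity surviving the limit, the two-sided control $\rho_j\sim r_j^{\alpha}$, and extracting a $\delta$ depending \emph{only} on $\alpha,s$ from a compactness argument) are genuine obstacles, whereas the direct inequality-chasing above resolves the lemma without any of that machinery.
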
 
\begin{proof}
It is enough to prove it for $t=-r^2$. If $$(x',0,-r^2)\in \{w<-r^{\alpha+\delta}\}$$ then $$u(x',h,-r^2)\leq -\frac{1}{2s}r^{\alpha+\delta}h^{2s}+\frac{nM}{1+\gamma}h^2$$ where $M$ denotes the semi-concavity constant of $u$. In addition, due to the semi-covexity property of $u$ in the tangential directions,
$$u(0,h,-r^2)\leq \sup_{x'} u(x',h,-r^2)+Mr^2.$$ 
Finally, since the origin has been selected as a free boundary point, the $C^{\alpha+2s}$ character of $u$ gives 
$$u(0,h,-r^2)=u(0,h,-r^2)-u(0,0,0)\geq -Ch^{\alpha+2s}.$$
Gathering all the above estimates, we have
$$-Ch^{\alpha+2s}\leq -\frac{1}{2s}r^{\alpha+\delta}h^{2s}+\frac{nM}{1+\gamma}h^2+Mr^2.$$
Now if $h=r^{\alpha+m\delta}$ for $m>1$, a contradiction is obtained by choosing $\delta$ so that $\frac{\alpha(1-\alpha)}{\alpha m-1}<\delta<\frac{2-\alpha(\alpha+2s)}{m(\alpha+2s)}$ provided that $m>\frac{2-(\alpha+2s)\alpha}{\alpha[2-\alpha(\alpha+2s)]}$, and $R_0$ is small enough.
\end{proof}

Now, we provide our monotonicity formula for solutions to the local situation. This type of formulas was first introduced in \cite{AC} for the stationary case with $\g=0$ (for  $\g\in (-1,1)$ see \cite{CFi}) and was extended in \cite{ACM1} to the corresponding evolution problem with $\g=0$.

For simplicity, as it was mentioned at the beginning of the section, we reduce the problem to the case of the zero obstacle.

\begin{lemma}\label{Lemma6non-a}
Let $\delta>0$ and $w$ and $R_0<1$ as above. Set $$\varphi(r)=\frac{1}{r^{2(1-s)}}\int_{-r^2}^0\int_{\R^n_+}y^{-\gamma}|\grad(\eta w)(x',y,\tau)|^2G_{-\gamma}(x',y,-\tau)dxd\tau$$
for $r<1$ where $\eta\in C_0^\infty(B_{2r})$ with $\eta\equiv 1$ and $\eta_{x_n}|_{B_r\cap\R^{n-1}}=0$. There exists a universal constant $C>0$ such that
\begin{enumerate}
\item[(i)] if $2\alpha+\delta>1+\gamma$ then $\varphi(r)\leq C$,
\item[(ii)] if $2\alpha+\delta<1+\gamma$ then $\varphi(r)\leq Cr^{2\alpha+\delta-1-\gamma}$.
\end{enumerate}
\end{lemma}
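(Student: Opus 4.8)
The plan is to establish the two bounds on $\varphi(r)$ by a Weiss-type monotonicity argument, combining differentiation of $\varphi$ in $r$ with the Almgren/eigenvalue-type lower bound for the Rayleigh quotient of the Gaussian-weighted Dirichlet energy. First I would record that the cut-off $\eta$ is essentially harmless: since $\eta\equiv 1$ on $B_r$ and $\eta_{x_n}$ vanishes on $B_r\cap\R^{n-1}$, the quantity $\eta w$ agrees with $w$ near the part of the boundary where the Signorini-type condition $w(x',0,t)\le 0$ with $w(u-\psi)=0$ matters, and the commutator terms produced by $L_{-\gamma}(\eta w)$ are supported in the annulus $B_{2r}\setminus B_r$, where they can be controlled using the Gaussian decay of $G_{-\gamma}$ (which contributes an exponentially small factor $e^{-c/\tau}$ over $\tau\in(-4r^2,-r^2)$) together with the crude $L^\infty$/energy bounds on $w$ from Lemma \ref{a_priori} and Theorem \ref{thm3.1}. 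So up to lower-order (indeed exponentially small) errors I may work as if $w$ itself were $L_{-\gamma}$-caloric in the half-cylinder and satisfies the obstacle-type sign conditions on $\{y=0\}$.

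Next I would rescale: let $w_r(z,\tau):=r^{-(1-s)}w(rz,r^2\tau)$, so that $\varphi(r)$ becomes, up to the cut-off errors, the fixed functional $\Phi(w_r):=\int_{-1}^0\int_{\R^n_+}y^{-\gamma}|\grad w_r|^2 G_{-\gamma}(z,-\tau)\,dz\,d\tau$ evaluated on the rescaled solution. Differentiating $\log\varphi(r)$ in $r$ (equivalently, using the self-similar structure of $G_{-\gamma}$ and integrating by parts in the caloric variable, exactly as in \cite{AC}, \cite{CFi} for the elliptic case and \cite{ACM1} for $\gamma=0$), one gets an identity of the form
\begin{equation*}
r\varphi'(r)=2\big(N(r)-(1-s)\big)\varphi(r)+(\text{cut-off errors}),
\end{equation*}
where $N(r)$ is the parabolic Almgren frequency, i.e. the ratio of the Gaussian-weighted Dirichlet energy on the strip $\tau\in(-r^2,0)$ to the corresponding boundary/$L^2$ term. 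Here the crucial input is Lemma \ref{eigenvalue}, which identifies the bottom eigenvalue $\lambda_0$ with $s(1-s)$ and therefore forces $N(r)\ge 1-s$ whenever the rescaled trace $w_r(\cdot,0,\cdot)$ vanishes on a half-hyperplane through the (free boundary) origin — which is precisely what Lemma \ref{Lemma5} buys us: for $r<R_0$ the set where $w<-r^{\alpha+\delta}$ does not reach the axis in convex hull, so after subtracting the controlled amount $r^{\alpha+\delta}$ the negative part of the trace is supported away from a half-space and the eigenvalue inequality applies (this is where the hypothesis on $2\alpha+\delta$ versus $1+\gamma$ enters, in matching the scaling of the correction $r^{\alpha+\delta}$ against $r^{2(1-s)}=r^{1+\gamma}$).

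From here the dichotomy is routine. In case (i), $2\alpha+\delta>1+\gamma$ means the correction is of strictly higher order than the normalization, so $N(r)\ge 1-s-o(1)$ and hence $r\varphi'(r)\ge -o(1)\varphi(r)-(\text{small})$, which after a Gronwall/ODE argument on $(0,R_0)$ yields $\varphi(r)\le C$, the constant absorbing the initial value $\varphi(R_0)$ and the exponentially small cut-off contributions. In case (ii), $2\alpha+\delta<1+\gamma$: the correction now dominates, and carrying the term $r^{\alpha+\delta}$ through the frequency estimate produces an extra factor degrading the exponent, giving $r\varphi'(r)\ge (2\alpha+\delta-1-\gamma)\varphi(r)-(\text{small})$ and hence, again by integrating the differential inequality, $\varphi(r)\le Cr^{2\alpha+\delta-1-\gamma}$. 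I expect the main obstacle to be the rigorous handling of the cut-off/commutator terms and, more delicately, verifying that Lemma \ref{Lemma5} genuinely puts the rescaled trace into the admissible class for Lemma \ref{eigenvalue} uniformly in $r$ — i.e. controlling the ``non-half-space'' part of $\{w<0\}$ on $\{y=0\}$ by the allowable correction — since the eigenvalue bound is sharp and any loss there would destroy the exponent $1-s$; the differentiation-of-frequency computation itself is standard after the caloric self-similar change of variables.
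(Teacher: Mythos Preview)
Your proposal identifies the right ingredients --- differentiation of $\varphi$, the eigenvalue Lemma~\ref{eigenvalue}, the truncation coming from Lemma~\ref{Lemma5}, and integration of a differential inequality --- but the mechanism you describe is not the one that actually works, and it differs from the paper's in a way that matters.

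You frame the argument as an Almgren-frequency/Gronwall scheme: $r\varphi'(r)=2(N(r)-(1-s))\varphi(r)+\text{errors}$, and then you try to push the correction $r^{\alpha+\delta}$ into $N(r)$ to get a multiplicative differential inequality. The computation does not organize itself this way. What the paper does is differentiate $\varphi$ directly and then, through integration by parts in $(x,y,\tau)$ using that $G_{-\gamma}$ solves the backward conjugate equation, reduce everything to two \emph{instantaneous} integrals at the single time slice $\tau=-r^2$: a Gaussian-weighted $L^2$ term $-(1-s)r^{2s-3}\!\int y^{-\gamma}(\eta w)^2G_{-\gamma}$ and a Gaussian-weighted Dirichlet term $2r^{2s-1}\!\int y^{-\gamma}|\nabla(\eta w)|^2G_{-\gamma}$. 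Only after this reduction does one introduce the truncation $\overline{w}=-(w+r^{\alpha+\delta})^-$ (which, by Lemma~\ref{Lemma5}, vanishes on a half-hyperplane through the origin) and invoke Lemma~\ref{eigenvalue} to make the sum of these two instantaneous terms nonnegative. What survives is the \emph{additive} remainder $(w-\overline w)^2+2\overline w(w-\overline w)$, bounded by $Cr^{2\alpha+\delta}$, which yields $\varphi'(r)\ge -Cr^{2\alpha+\delta-\gamma-2}-Cr^{1+\gamma+\alpha}$. One then simply integrates this from $r$ to $1$; no Gronwall, no $\varphi(r)$ on the right-hand side. Your inequality $r\varphi'(r)\ge(2\alpha+\delta-1-\gamma)\varphi(r)$ is not what the computation produces, because $\varphi(r)$ is a time-integrated quantity while the eigenvalue comparison lives on a single time slice.

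The second gap is that you dismiss the cut-off as producing only ``exponentially small'' errors from the annulus $B_{2r}\setminus B_r$. Those annular terms are indeed harmless, but you have not accounted for the boundary contribution on $\{y=0\}$, which is where the real work lies. In the integration by parts the terms $I_\epsilon=\int y^{-\gamma}((\eta w)^2)_y G_{-\gamma}|_{y=\epsilon}$ and $J_\epsilon=\int y^{-\gamma}(\eta w)^2(G_{-\gamma})_y|_{y=\epsilon}$ appear; controlling $\liminf_{\epsilon\to0} I_\epsilon\ge -Cr^{1+\gamma+\alpha}$ requires the H\"older estimate on $w$ from Theorem~\ref{thm3.1} together with the semiconcavity bound $\partial_y(y^\gamma u_y)\le nC_0 y^\gamma$ (via the change of variable $\xi=y^{1+\gamma}$), and showing $J_\epsilon\to 0$ again uses the H\"older continuity. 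This is the source of the second error term $Cr^{1+\gamma+\alpha}$ and is a substantial part of the proof, not a commutator issue absorbed by Gaussian decay.
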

\begin{proof}

We observe that

\begin{align*}
\div (y^{- \gamma} \nabla ((\eta w)^2)) &= 2 \eta w^2 \div (y^{- \gamma } \nabla \eta ) + 2 \eta ^2 w \div (y^{- \gamma} \nabla w )+ 2 y^{- \gamma} |\nabla \eta |^2 w^2 \\
&\ \ \ + 2  y^{- \gamma} \eta w \nabla \eta \cdot \nabla w +2 y^{- \gamma} \eta ^2 |\nabla w |^2 .
\end{align*}
In addition, we have 
$$|\nabla (\eta w)|^2 = \eta ^2 |\nabla w |^2 + w^2|\nabla \eta |^2 + 2   \eta w \nabla \eta \cdot \nabla w $$
thus
\begin{align*}
y^{- \gamma} |\nabla ( \eta w) |^2 &= \frac{1}{2} \left( \div (y^{- \gamma} \nabla ((\eta w)^2)) - y^{- \gamma } \partial _t (\eta w)^2 \right) - 2 y^{- \gamma} \eta w \nabla \eta \cdot \nabla w  - \eta w^2 \div (y^{- \gamma } \nabla \eta ) . 
\end{align*}
Now we compute
\begin{align*}
\varphi ' (r) &= 2 (s-1) r^{2s-3} \int_{-r^2}^0 \int_{\mathbb{R}^n_+} y^{- \gamma} |\nabla ( \eta w) |^2 G_{- \gamma } \ dxd \tau + 2  r^{2s-1}  \int_{\mathbb{R}^n_+} (y^{- \gamma} |\nabla ( \eta w) |^2 G_{- \gamma } )|_{\tau = -r^2} \ dx \\
&=  (s-1) r^{2s-3} \int_{-r^2}^0 \int_{\mathbb{R}^n_+} \left( \div (y^{- \gamma} \nabla (\eta w)^2) - y^{- \gamma } \partial _t (\eta w)^2 \right) G_{- \gamma } \ dx d \tau \\
&\ \ \ - 2 (s-1) r^{2s-3} \int_{-r^2}^0 \int_{\mathbb{R}^n_+} \left( 2 y^{- \gamma} \eta w \nabla \eta \cdot \nabla w  + \eta w^2 \div (y^{- \gamma } \nabla \eta ) \right) G_{- \gamma } \ dx d \tau \\
&\ \ \ + 2  r^{2s-1}  \int_{\mathbb{R}^n_+} (y^{- \gamma} |\nabla ( \eta w) |^2 G_{- \gamma } )|_{\tau = -r^2} \ dx.
\end{align*}
We now integrate by parts, using a standard mollification argument, to obtain
\begin{align*}
\varphi _{\epsilon} ' (r) &= (1-s) r^{2s-3} \int_{-r^2}^0 \int_{\mathbb{R}^n_+} \left( y^{- \gamma} \nabla ((\eta w)^2) \nabla G_{- \gamma} + y^{- \gamma } \partial _t (\eta w)^2 G_{- \gamma } \right)  \ dx d \tau \\
&\ \ \ - (1-s) r^{2s-3} \int_{-r^2}^0 \int_{\mathbb{R}^{n-1}} \left( y^{- \gamma} ((\eta w)^2)_{\nu } G_{- \gamma } \right) |_{y = \epsilon }  \ dx' d \tau \\
&\ \ \ - 2 (s-1) r^{2s-3} \int_{-r^2}^0 \int_{\mathbb{R}^n_+} \left( 2 y^{- \gamma} \eta w \nabla \eta \cdot \nabla w  + \eta w^2 \div (y^{- \gamma } \nabla \eta ) \right) G_{- \gamma } \ dx d \tau \\
&\ \ \ + 2  r^{2s-1}  \int_{\mathbb{R}^n_+} (y^{- \gamma} |\nabla ( \eta w) |^2 G_{- \gamma } )|_{\tau = -r^2} \ dx.
\end{align*}
We again integrate by part to obtain
\begin{align} \label{Lemma6non-a_1}
\varphi _{\epsilon} ' (r) &= -(1-s) r^{2s-3} \int_{-r^2}^0 \int_{\mathbb{R}^n_+} \left( (\eta w)^2 \div (y^{- \gamma} \nabla G_{- \gamma}) - y^{- \gamma } \partial _t (\eta w)^2 G_{- \gamma } \right)  \ dxd \tau \\ \nonumber
&\ \ \ + (1-s) r^{2s-3} \int_{-r^2}^0 \int_{\mathbb{R}^{n-1}} \left( y^{- \gamma} (\eta w)^2 (G_{- \gamma })_{\nu } \right) |_{y = \epsilon }  \ dx' d \tau \\ \nonumber
&\ \ \ - (1-s) r^{2s-3} \int_{-r^2}^0 \int_{\mathbb{R}^{n-1}} \left( y^{- \gamma} ((\eta w)^2)_{\nu } G_{- \gamma } \right) |_{y = \epsilon }  \ dx' d \tau \\ \nonumber
&\ \ \ - 2 (s-1) r^{2s-3} \int_{-r^2}^0 \int_{\mathbb{R}^n_+} \left( 2 y^{- \gamma} \eta w \nabla \eta \cdot \nabla w  + \eta w^2 \div (y^{- \gamma } \nabla \eta ) \right) G_{- \gamma } \ dx d \tau \\ \nonumber
&\ \ \ + 2  r^{2s-1}  \int_{\mathbb{R}^n_+} (y^{- \gamma} |\nabla ( \eta w) |^2 G_{- \gamma } )|_{\tau = -r^2} \ dx. 
\end{align}
Now, we will estimate the second and the third terms in the above expression. To do so, let us denote by
$$I_{\epsilon } := \int_{-r^2}^0 \int_{\mathbb{R}^{n-1}} \left( y^{- \gamma} ((\eta w)^2)_{y } G_{- \gamma } \right) |_{y = \epsilon }  \ dx' d \tau $$
and
$$J_{\epsilon } := \int_{-r^2}^0 \int_{\mathbb{R}^{n-1}} \left( y^{- \gamma} (\eta w)^2 (G_{- \gamma })_{y} \right) |_{y = \epsilon }  \ dx' d \tau .$$
We will follow the proof of Lemma 4.6 in \cite{CFi}, in particular we will show that $I_{\epsilon }$ is bounded below by a certain power of $r$, while $J_{\epsilon }$ goes to zero whenever $\epsilon \to 0^+$. Firstly, we observe that $w$ is obtained by solving
\begin{align*}
\begin{cases}
L_{- \gamma } w = 0, &\ \ \ \  \text{in} \ \mathbb{R}^n_+ \times \mathbb{R}^+ \\
w(x',0,t) = \lim _{y \to 0^+} y^{\gamma } u_y(x',y,t),  &\ \ \ \  \text{in} \ \mathbb{R}^{n-1} \times \mathbb{R}^+. 
\end{cases}
\end{align*}
Therefore, using Lemma  \ref{lemma3.1}, we have that
$$w(x',y,\tau) \geq w(x',0,\tau) - \frac{nC_0}{1 + \gamma} y^{1+ \gamma }$$
for all $x' \in \mathbb{R}^{n-1},\ y>0$ and $\tau$ fixed, or equivalently
$$w^2(x',y,\tau) - w^2(x',0,\tau) \geq - C y^{1+ \gamma } (r+y)^{\alpha }$$
for $x' \in B_r \cap \mathbb{R}^{n-1}$ and $y>0$, since $w$ is H\"{o}lder continuous and is nonnegative. Now, consider a change of variables $\xi = y^{1+\gamma
 }$ and let $\tilde{w}(x',\xi ,\tau):= w(x',y,\tau)$. Then 
\begin{equation} \label{Lemma6non-a_2}
\tilde{w}^2(x',\xi,\tau) - \tilde{w}^2(x',0,\tau) \geq - C_2 \xi  (r+\xi ^{\frac{1}{1+\gamma }})^{\alpha }
\end{equation}
for any $\tau$ fixed, $x' \in B_r$ and $\xi >0$. In addition, $y^{-\gamma }(w^2)_y=C_3 (\tilde{w}^2)_{\xi}$ for a constant $C_3$ depending on $\gamma $. Therefore, we are able to estimate the average of the spatial integral of $I_{\epsilon }$ as follows
\begin{align} \label{Lemma6non-a_3}
A &= \frac{C_{n,\gamma }}{\epsilon } \int_0^{\epsilon } \int_{\mathbb{R}^{n-1}} (\eta \tilde{w})_{\xi}^2(x',\xi ,\tau ) \frac{e^{\frac{|x'|^2+C_4 \xi ^{\frac{2}{1+\gamma }}}{4 \tau}}}{(-\tau )^{\frac{n-\gamma }{2}}} \ dx' d \xi \\ \nonumber
&= \frac{C_{n,\gamma }}{\epsilon } \int_{\mathbb{R}^{n-1}} \frac{1}{(-\tau )^{\frac{n-\gamma }{2}}} \left( \eta \tilde{w} ^2(x',\epsilon ,\tau ) e ^{\frac{|x'|^2+C_4 \epsilon ^{\frac{2}{1+\gamma }}}{4 \tau}} - \eta \tilde{w}^2(x',0,\tau ) e ^{\frac{|x'|^2}{4\tau }} \right) \ dx'  \\ \nonumber
&\ \ \ - \frac{C_{n,\gamma }}{\epsilon } \int_0^{\epsilon } \int_{\mathbb{R}^{n-1}} \frac{1}{(-\tau )^{\frac{n-\gamma }{2}}} \eta \tilde{w}^2(x',\xi ,\tau ) \frac{d}{d\xi } \left( e ^{\frac{|x'|^2+C_4 \xi ^{\frac{2}{1+\gamma }}}{4 \tau}} \right) \ dx' d \xi 
\end{align}
for any $\tau \in (-r^2,0)$ fixed. Observe that the last term in (\ref{Lemma6non-a_3}) is positive which gives
\begin{align} \label{Lemma6non-a_4}
&\frac{C_{n,\gamma }}{\epsilon } \int_0^{\epsilon } \int_{\mathbb{R}^{n-1}} (\eta \tilde{w})_{\xi}^2(x',\xi ,\tau ) \frac{e ^{\frac{|x'|^2+C_4 \xi ^{\frac{2}{1+\gamma }}}{4 \tau}}}{(-\tau )^{\frac{n-\gamma }{2}}} \ dx' d \xi \\ \nonumber 
&\geq \frac{C_{n,\gamma }}{\epsilon } \int_{\mathbb{R}^{n-1}} \frac{1}{(-\tau )^{\frac{n-\gamma }{2}}} \left( \eta \tilde{w} ^2(x',\epsilon ,\tau ) e ^{\frac{|x'|^2+C_4 \epsilon ^{\frac{2}{1+\gamma }}}{4 \tau}} - \eta \tilde{w}^2(x',0,\tau ) e ^{\frac{|x'|^2}{4\tau }} \right) \ dx' 
\end{align}
for any $\tau \in (-r^2,0)$ fixed. Now
$$\tilde{w}^2(x',\epsilon ,\tau ) - \tilde{w}^2(x',0,\tau ) \geq - C_2 \epsilon  (r+\epsilon ^{\frac{1}{1+\gamma }})^{\alpha }$$
by (\ref{Lemma6non-a_2}), therefore 
\begin{align} \label{Lemma6non-a_5}
&\frac{C_{n,\gamma }}{\epsilon } \int_0^{\epsilon } \int_{\mathbb{R}^{n-1}} (\eta \tilde{w})_{\xi}^2(x',\xi ,\tau ) \frac{e ^{\frac{|x'|^2+C_4 \xi ^{\frac{2}{1+\gamma }}}{4 \tau}}}{(-\tau )^{\frac{n-\gamma }{2}}} \ dx' d \xi \\ \nonumber 
&\geq \frac{C_{n,\gamma }}{\epsilon } \int_{\mathbb{R}^{n-1}} \frac{1}{(-\tau )^{\frac{n-\gamma }{2}}} \left(  e ^{\frac{|x'|^2+C_4 \epsilon ^{\frac{2}{1+\gamma }}}{4 \tau}} \eta ^2 \tilde{w} ^2(x',0,\tau ) -  e ^{\frac{|x'|^2}{4\tau }} \eta ^2 \tilde{w}^2(x',0,\tau )  \right) \ dx' \\ \nonumber
&\ \ \ - \frac{C_{n,\gamma }}{\epsilon } \int_{\mathbb{R}^{n-1}} \frac{1}{(-\tau )^{\frac{n-\gamma }{2}}} C_2 e ^{\frac{|x'|^2+C_4 \epsilon ^{\frac{2}{1+\gamma }}}{4 \tau}} \eta ^2 \epsilon (r+\epsilon ^{\frac{1}{1+\gamma }})^{\alpha} \ dx' \\ \nonumber
&\geq \frac{C_{n,\gamma }}{\epsilon } \int_{\mathbb{R}^{n-1}} \frac{\eta ^2 |x'|^{2\alpha }}{(-\tau )^{\frac{n-\gamma }{2}}} \left( e ^{\frac{|x'|^2+C_4 \epsilon ^{\frac{2}{1+\gamma }}}{4 \tau}} - e ^{\frac{|x'|^2}{4\tau }} \right) \ dx' - C_{n,\gamma } \int_{\mathbb{R}^{n-1}} e ^{\frac{|x'|^2+C_4 \epsilon ^{\frac{2}{1+\gamma }}}{4 \tau}} \eta ^2  (r+\epsilon ^{\frac{1}{1+\gamma }})^{\alpha} \ dx' \\ \nonumber
&\ \ \ + \frac{C_{n,\gamma }}{\epsilon } \int_{\mathbb{R}^{n-1}} \frac{1}{(-\tau )^{\frac{n-\gamma }{2}}} \eta ^2 \tilde{w}^2(0,0,\tau ) \left( e ^{\frac{|x'|^2+C_4 \epsilon ^{\frac{2}{1+\gamma }}}{4 \tau}} - e ^{\frac{|x'|^2}{4\tau }} \right) \ dx'
\end{align}
for every $\tau \in (-r^2,0)$ fixed, since $\tilde{w}$ is H\"older continuous with H\"older exponent $\alpha $. Now, if we pass to the limit, we obtain
$$\int_{-r^2}^0 \int_{\mathbb{R}^{n-1}} \frac{\eta ^2 (x')}{(-\tau )^{\frac{n-\gamma }{2}}} e ^{\frac{|x'|^2+C_4 \epsilon ^{\frac{2}{1+\gamma }}}{4 \tau}} (r+\epsilon ^{\frac{1}{1+\gamma }})^{\alpha} \ dx' d\tau \  \to Cr^{1+\gamma +\alpha }$$
as $\epsilon \to 0^+$, where $C$ is a universal constant depending on $n$ and $\gamma $. This fact and the expression (\ref{Lemma6non-a_5}) above shows that
$$\limsup _{y \to 0^+} \int_{-r^2}^0 \int_{\mathbb{R}^{n-1}} y^{-\gamma } ((\eta w)^2)_y G_{-\gamma } \ dx' d\tau \geq -C r^{1+\gamma +\alpha }$$
for some constant $C=C(n,\gamma )$ and every $r \geq 0$.

\par To estimate $J_{\epsilon }$, we simply compute $\partial _y G_{-\gamma }$ and make use of the H\"older regularity of $w$ to gain control of the integral. In particular,
\begin{align*}
\int_{\mathbb{R}^{n-1}} (\eta w)^2 y^{-\gamma } (G_{-\gamma })_y \ dx' &= -C_{n,\gamma } \int_{\mathbb{R}^{n-1}} \frac{1}{2} \frac{1}{(-\tau )^{1+\frac{n-\gamma }{2}}} (\eta w)^2 y^{-\gamma +1} e^{\frac{|x'|^2+|y|^2}{4\tau }} \dx' \\
&\leq C \int_{\mathbb{R}^{n-1}} \frac{1}{|\tau |^{1+\frac{n-\gamma }{2}}} |\eta ^2 w^2(x',y,\tau ) - \eta ^2 w(0,0,\tau )| y^{-\gamma +1} e^{\frac{|x'|^2+|y|^2}{4\tau }}  \ dx' \\
&\ \ \ + C \int_{\mathbb{R}^{n-1}} \frac{1}{|\tau |^{1+\frac{n-\gamma }{2}}}  \eta ^2 w(0,0,\tau ) y^{-\gamma +1} e^{\frac{|x'|^2+|y|^2}{4\tau }}  \ dx'
\end{align*}
for every $\tau \in (-r^2,0)$ fixed, $y>0$ where $C$ is a positive constant depending on $n$ and $\gamma $. Therefore
\begin{align*}
&\left| \int_{-r^2}^0 \int_{\mathbb{R}^{n-1}} (\eta w)^2 y^{-\gamma } (G_{-\gamma })_y \ dx' d\tau \right| \\
&\leq C  \int_{-r^2}^0 \int_{\mathbb{R}^{n-1}} \frac{1}{|\tau |^{1+\frac{n-\gamma }{2}}} |\eta ^2 w^2(x',y,\tau ) - \eta ^2 w(0,0,\tau )| y^{-\gamma +1} e^{\frac{|x'|^2+|y|^2}{4\tau }}  \ dx' d\tau \\
&\ \ \ + C \int_{-r^2}^0 \int_{\mathbb{R}^{n-1}} \frac{1}{|\tau |^{1+\frac{n-\gamma }{2}}}  \eta ^2 w(0,0,\tau ) y^{-\gamma +1} e^{\frac{|x'|^2+|y|^2}{4\tau }}  \ dx' 
\end{align*}
for every $y>0$. In the above expression, the second term goes to zero as $y \to 0^+$ while the first term is controlled by $y^{2\alpha }$ (due to Lemma  \ref{lemma3.1}), for $y$ small and as such, it also goes to zero as $y \to 0^+$.
\par With the above estimates at hand, we let $\epsilon \to 0^+$ in (\ref{Lemma6non-a_1}) to obtain  
\begin{align*}
\varphi '(r) &\geq -(1-s) r^{2s-3} \int_{-r^2}^0 \int_{\mathbb{R}^n_+} ( (\eta w)^2 \div (y^{-\gamma } \nabla G_{-\gamma } ) - y^{-\gamma } \partial _t (\eta w)^2 G_{-\gamma } ) \ dxd\tau \\
&\ \ \ - 2(s-1) r^{2s-3} \int_{-r^2}^0 \int_{\mathbb{R}^n_+} ( 2 y^{-\gamma } \eta w \nabla \eta \nabla w + \eta w^2 \div (y^{-\gamma } \nabla \eta ) ) G_{-\gamma } \ dx d\tau \\
&\ \ \ + 2 r^{2s-1} \int_{\mathbb{R}_+^n} ( y^{-\gamma } |\nabla (\eta w)^2| G_{-\gamma } ) | _{\tau = -r^2} \ dx - C r^{1+\gamma +\alpha }
\end{align*}
for every $r \in (0,1)$, where $C$ is a universal constant. We integrate by parts on the first term to obtain
\begin{align*}
\varphi '(r) &\geq -(1-s) r^{2s-3} \int_{-r^2}^0 \int_{\mathbb{R}^n_+}  (\eta w)^2 ( \div (y^{-\gamma } \nabla G_{-\gamma } ) + y^{-\gamma } \partial _t (G_{-\gamma }) ) \ dx d\tau \\
&\ \ \ - (1-s) r^{2s-3}  \int_{\mathbb{R}^n_+}  y^{-\gamma } (\eta w)^2 (x',y,-r^2) G_{-\gamma }(x,y,r^2) \ dx  \\
&\ \ \ - 2(s-1) r^{2s-3} \int_{-r^2}^0 \int_{\mathbb{R}^n_+} ( 2 y^{-\gamma } \eta w \nabla \eta \nabla w + \eta w^2 \div (y^{-\gamma } \nabla \eta ) ) G_{-\gamma } \ dx d\tau \\
&\ \ \ + 2 r^{2s-1} \int_{\mathbb{R}_+^n} ( y^{-\gamma } |\nabla (\eta w)^2| G_{-\gamma } ) | _{\tau = -r^2} \ dx - C r^{1+\gamma +\alpha }.
\end{align*}
Since $w(0,0,0)=0$ we end up,
\begin{align*}
\varphi '(r) &\geq -(1-s) r^{2s-3} \int_{\mathbb{R}^n_+} y^{-\gamma } (\eta w)^2 (x',y,-r^2)  G_{-\gamma } (x,y,r^2)  \ dx  \\
&\ \ \ +2 r^{2s-1}  \int_{\mathbb{R}^n_+} y^{-\gamma } |\nabla (\eta w)|^2 (x',y,-r^2) G_{-\gamma } (x,y,r^2) \ dx \\
&\ \ \ - (s-1) r^{2s-3} \int_{-r^2}^0 \int_{\mathbb{R}^n_+}  y^{-\gamma } \nabla \eta ^2 \nabla w^2 G_{-\gamma }  \ dx d\tau \\
&\ \ \ + 2(1-s) r^{2s-3} \int_{-r^2}^0 \int_{\mathbb{R}^n_+}  \eta w^2 \div (y^{-\gamma } \nabla \eta )  G_{-\gamma } \ dx  d\tau - C r^{1+\gamma +\alpha }
\end{align*}
and finally,
\begin{align*}
\varphi '(r) &\geq -(1-s) r^{2s-3} \int_{\mathbb{R}^n_+} y^{-\gamma } (\eta w)^2 (x',y,-r^2)  G_{-\gamma } (x',y,r^2)  \ dx \\
&\ \ \ +2 r^{2s-1}  \int_{\mathbb{R}^n_+} y^{-\gamma } |\nabla (\eta w)|^2 (x',y,-r^2) G_{-\gamma } (x',y,r^2) \ dx   - C r^{1+\gamma +\alpha }.
\end{align*}
Now consider the truncated function $\overline{w} := -(w+r^{\alpha +\delta })^-$ and note that
\begin{align*}
\int_{\mathbb{R}^n_+} y^{-\gamma } |\nabla (\eta \overline{w} ) (x',y,-r^2) |^2 G_{-\gamma } (x',y,r^2)  \ dx  \leq  \int_{\mathbb{R}^n_+} y^{-\gamma } |\nabla (\eta w) (x',y,-r^2) |^2 G_{-\gamma } (x',y,r^2)  \ dx.
\end{align*}
Therefore
\begin{align*}
\varphi '(r) &\geq -(1-s) r^{2s-3} \int_{\mathbb{R}^n_+} y^{-\gamma } (\eta (w-\overline{w})+\eta \overline{w})^2 (x',y,-r^2) G_{-\gamma }(x',y,r^2) \ dx \\
&\ \ \ + 2 r^{2s-1} \int_{\mathbb{R}^n_+} y^{-\gamma } |\nabla (\eta w)|^2 (x,y,-r^2)  G_{-\gamma } (x',y,r^2)  \ dx -C r^{1+\gamma +\alpha }
\end{align*}
hence
\begin{align*}
\varphi '(r) &\geq -(1-s) r^{2s-3} \int_{\mathbb{R}^n_+} y^{-\gamma } \eta ^2 \left[ (w-\overline{w})^2 +2\overline{w} (w-\overline{w}) \right] G_{-\gamma }(x',y,r^2) \ dx -C r^{1+\gamma +\alpha } \\
&\geq -C r^{2\alpha +\delta -\gamma -2} -C r^{1+\gamma +\alpha } 
\end{align*}
since $2s=1-\gamma $. Therefore
$$\varphi ' (r) \geq -C -C r^{2\alpha +\delta -\gamma -1}$$
and
$$\varphi (1) -\varphi (r) \geq -C + C  r^{2\alpha +\delta -\gamma -1}.$$
Since $\varphi (1)$ is universally bounded, the proof is complete.
\end{proof}

Next, we state our main result of this section:

\begin{thm}\label{Theorem5}
Let $u$ the solution of (\ref{FBP3}), then $u$ is $C^{1,s}$ up to the hyperplane $\R^{n-1}$, in the sense that (\ref{y-optimal}) holds.
\end{thm}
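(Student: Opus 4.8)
The plan is to upgrade the sub-optimal H\"older bound \eqref{below2} into the optimal exponent \eqref{y-optimal} by exploiting the monotonicity functional $\varphi(r)$ of Lemma \ref{Lemma6non-a} in a bootstrap/iteration scheme, combined with a blow-up (compactness) argument using the eigenvalue of Lemma \ref{eigenvalue}. Concretely, I would start from the best growth rate currently known, say $\sup_{\widetilde Q_{r/2}}|w|\le C r^{\alpha}$ coming from Theorem \ref{thm3.1} and Theorem \ref{below} (with $\alpha+2s$ for $u-\psi$, hence $\alpha$ for $w=y^\gamma u_y$ after the $y$-integration and the relation $2s=1-\gamma$), and feed this exponent into Lemma \ref{Lemma6non-a}. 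Case (ii) of that lemma gives $\varphi(r)\le C r^{2\alpha+\delta-1-\gamma}$ as long as $2\alpha+\delta<1+\gamma$, and recalling $1+\gamma=2(1-s)+2\gamma$... more to the point, $1+\gamma = 2-2s$, so $1+\gamma$ is exactly the target: the weighted Dirichlet energy of $\eta w$ at scale $r$, renormalized by $r^{2(1-s)}$, is bounded, which is precisely the Morrey-type statement equivalent to $|w|\lesssim r^{1-s}$.

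The key steps, in order, would be: (1) translate the current H\"older exponent for $u-\psi$ into a growth rate for $w$ and into an upper bound for the renormalized energy $\psi$-functional using Lemma \ref{Lemma6non-a}; (2) use the truncation $\overline w=-(w+r^{\alpha+\delta})^-$ together with Lemma \ref{Lemma5}, which guarantees that the "bad set" $\{w<-r^{\alpha+\delta}\}\cap Q_r'$ does not contain the origin in its convex hull — this is what makes the Poincar\'e/eigenvalue estimate in the half-space applicable, i.e. on the relevant sphere $w$ behaves like a function vanishing on (essentially) a half of $\R^{n-1}$, so Lemma \ref{eigenvalue} with $\lambda_0=(1-s)s$ can be invoked; (3) run the monotonicity: since $\varphi'(r)\ge -C-Cr^{2\alpha+\delta-\gamma-1}$, integrate from $r$ to $1$ to conclude $\varphi(r)\le \varphi(1)+C+C r^{\min(1,2\alpha+\delta-\gamma)}\le C$, i.e.\ $\varphi$ is bounded; (4) convert boundedness of $\varphi(r)$ into the pointwise bound \eqref{y-optimal} via a Caccioppoli/Moser iteration (Proposition \ref{prop2.4} and Proposition \ref{weighted_imb_thm}) for the conjugate equation $L_{-\gamma}w=0$, exactly as the sub-solution estimate is used elsewhere. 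If the first pass only gives a partial improvement (because $2\alpha+\delta-1-\gamma$ is still negative but the new exponent $\alpha'$ exceeds $\alpha$), iterate: each round replaces $\alpha$ by a strictly larger $\alpha'$, and after finitely many steps one lands in case (i) of Lemma \ref{Lemma6non-a}, $2\alpha+\delta>1+\gamma$, giving $\varphi(r)\le C$.

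The main obstacle, I expect, is step (2)–(3): making rigorous that the truncated energy $\int y^{-\gamma}|\nabla(\eta\overline w)|^2 G_{-\gamma}$ is small relative to $\int y^{-\gamma}(\eta\overline w)^2 G_{-\gamma}/r^2$ in the "good" regime. This is where the geometry from Lemma \ref{Lemma5} (the origin is not in the convex hull of the bad set) has to be converted into a genuine spectral gap: one wants to say that on each time slice the negative part of $w+r^{\alpha+\delta}$ is supported, after rescaling $z\mapsto z/r$ and the Gaussian weight, in a region that avoids a full neighborhood of the origin, so that the relevant Rayleigh quotient is bounded below by $\lambda_0=(1-s)s$ rather than by $0$. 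Handling the interaction of the $\eta$-cutoff (with $\eta_{x_n}=0$ on $B_r\cap\R^{n-1}$, killing the boundary term on the thin set) with the Gaussian and with the parabolic scaling, and tracking the error terms $\int y^{-\gamma}\eta w^2\,\mathrm{div}(y^{-\gamma}\nabla\eta)G_{-\gamma}$ supported in the annulus $B_{2r}\setminus B_r$ (which are lower-order precisely because there $G_{-\gamma}$ is exponentially small or because $w$ is already controlled by the inductive bound), is the technical heart. Once $\varphi$ is shown bounded, the final conversion to \eqref{y-optimal} is routine: a standard argument (Caccioppoli plus the weighted Sobolev embedding of Proposition \ref{weighted_imb_thm}, or directly Proposition \ref{prop2.4} applied to $w^\pm$) upgrades the uniform bound on the renormalized Dirichlet energy at all scales to the stated $C^{1,s}$ estimate.
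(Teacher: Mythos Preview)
Your overall architecture --- feed the current H\"older exponent into Lemma~\ref{Lemma6non-a}, use its dichotomy (i)/(ii) either to conclude or to gain an improved exponent, and iterate --- is correct and is precisely the scheme the paper follows (the bootstrap itself is not spelled out there but deferred to the references). However, you have misplaced the work. Your steps (2)--(3), the spectral-gap argument combining Lemma~\ref{Lemma5} with the eigenvalue Lemma~\ref{eigenvalue}, are already absorbed into the \emph{proof} of Lemma~\ref{Lemma6non-a}; by the time you reach Theorem~\ref{Theorem5} that lemma is a black box, so the ``main obstacle'' you flag is not an obstacle here. The actual content of Theorem~\ref{Theorem5} is your step (4): passing from a bound on $\varphi(r)$ to a pointwise bound on $w$.

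For that conversion the paper does \emph{not} use Moser iteration. Instead it exploits that the truncation $\overline w=-(w+r^{\alpha+\delta})^-$ extends to a global $L_{-\gamma}$-subsolution across $\{y=0\}$, tests the equation for $(\eta\overline w)^2$ against the backward kernel $G_{-\gamma}$ to obtain a sub-mean-value inequality
\[
(\eta\overline w)^2(X,t)\le \int_{\R^n}|z_n|^{-\gamma}(\eta\overline w)^2(z,\tau)\,G_{-\gamma}(X-z,t-\tau)\,dz,
\]
and then applies a Gaussian-weighted Poincar\'e inequality (valid because $\overline w$ vanishes on a half of $\{y=0\}$, cf.\ Lemma~\ref{Lemma5}) to bound the right side by $C|\tau|\int|\nabla(\eta\overline w)|^2 G_{-\gamma}$. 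Integrating in $\tau$ over $(-r^2,-r^2/2)$ yields $|\overline w|^2\le C\,r^{2(1-s)}\varphi(r)$ directly, and then the dichotomy runs. Your proposed route through Propositions~\ref{prop2.4} and~\ref{weighted_imb_thm} is not ``routine'': Proposition~\ref{prop2.4} is stated for \emph{solutions} and controls $L^\infty$ by an $L^p$-norm with the flat weight $|y|^{-\gamma}$, whereas $\varphi(r)$ controls the \emph{gradient} with a Gaussian weight; you would still need to compare the two weights on $Q_r$, invoke a Poincar\'e for $\overline w$ (again using the half-space vanishing), and justify the one-sided Moser bound for a subsolution across the thin obstacle. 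None of this is fatal, but it duplicates in a clumsier way exactly what the paper's sub-mean-value plus Gaussian Poincar\'e accomplishes in one stroke.
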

\begin{proof}
Let $w$ and $\overline{w}$ be as in the proof of Lemma \ref{Lemma6non-a}. Observe that $\overline{w}$, by its definition, is $L_{-\g}-$subsolution in $\{y>0\}$ and can be extended to be $L_{-\g}-$subsolution globally (see for instance \cite{CFi}). Now fix $\tau>0$, choose $R>0$ large enough and $\varepsilon<\tau$. We define a cut-off function $\eta=\eta(X)$ so that $\text{supp}\eta\in B_{R+1}(0)$, $\eta \equiv 1$ on $B_R(0)$ and $|\grad\eta|\leq C$.  Observe that
$$\div(y^{-\gamma}\grad(\eta\overline{w})^2)-y^{-\gamma}\partial_\xi(\eta\overline{w})^2=2y^{-\gamma}\eta^2|\grad \overline{w}|^2+8y^{-\gamma}\overline{w}\eta\grad\overline{w}\grad\eta
+2(\eta\Delta\eta+y^{-\gamma}|\grad\eta|^2)\overline{w}^2$$
\begin{equation}\label{thm5-1}
+2\eta^2\overline{w}(\div(y^{-\gamma}\grad\overline{w})-y^{-\gamma}\partial_\xi\overline{w}).
\end{equation}

An integration by parts along with the fact that $\eta$ is compactly supported, gives
$$2\int_{-\tau}^{-\varepsilon}\int_{\R^n}|y|^{-\gamma}\eta^2|\grad\overline{w}|^2G_{-\gamma}(X,-\xi)dz d\xi=-\int_{\R^n}|y|^{-\gamma}\eta^2\overline{w}^2G_{-\gamma}(X,\varepsilon)dz +\int_{\R^n}|y|^{-\gamma}\eta^2\overline{w}^2G_{-\gamma}(X,\tau)dz $$ 
$$-8\int_{-\tau}^{-\varepsilon}\int_{\R^n}|y|^{-\gamma}\overline{w}
\eta\grad\eta\grad
\overline{w}G_{-\gamma}(x,-\xi)dz d\xi-2\int_{-\tau}^{-\varepsilon}\int_{\R^n}(\eta\div(|y|^{-\gamma}\grad{\eta})+|y|^{-\gamma}|\grad\eta|^2)\overline{w}^2G_{-\gamma}(x,-\xi)dz d\xi$$
\begin{equation}\label{thm5-2}
-2\int_{-\tau}^{-\varepsilon}\int_{\R^n}\eta^2\overline{w}(\div(|y|^{-\gamma}\overline{w})-|y|^{-\gamma}\partial_{\xi}\overline{w})G_{-\gamma}(x,-\xi)dz d\xi.
\end{equation}
Observe that $$\int_{-\tau}^{-\varepsilon}\int_{\R^n}|y|^{-\gamma}\overline{w}\eta|\grad\eta||\grad\overline{w}|G_{-\g}(x,-\xi)dz d\xi\leq C\int_{-\tau}^{-\varepsilon}\int_{B_{R+1}\setminus B_{R}}|y|^{-\gamma}|\overline{w}||\grad\overline{w}|\frac{e^{-R^2/4|\xi|}}{|\xi|^{n/2-\gamma/2}}dz d\xi$$
$$\leq Ce^{-R^2/4+\varepsilon_0}\int_{-\tau}^{0}\int_{B_{R+1}\setminus B_{R}}|y|^{-\gamma}|\overline{w}||\grad\overline{w}|dz d\xi.$$
Using Cauchy-Schwartz, we conclude that the last three terms on the right hand side of (\ref{thm5-2}) behave the same, in particular they decay to zero as $R\rightarrow \infty$. Therefore we conclude that
$$(\eta\overline{w})^2(0,0)\leq \int_{\R^n}|y|^{-\gamma}(\eta\overline{w})^2G_{-\gamma}(X,\tau)dX$$
or, after rescaling,
\begin{equation}\label{thm5-3}
(\eta\overline{w})^2(X,t)\leq \int_{\R^n}|z_{n}|^{-\gamma}(\eta\overline{w})^2(z,\tau)G_{-\g}(X-z,t-\tau)dz
\end{equation}
for every $(X,t)\in Q^+_{r/2}$ and $-r^2<\tau<-\frac{r^2}{2}$. By weighted Poincar\'{e} inequality for Gaussian measures we have that
\begin{equation}\label{thm5-4}
\int_{\R^n}|z_{n}|^{-\gamma}(\eta\overline{w})^2(z,\tau)G(X-z,t-\tau)dz\leq 2 |\tau|\int_{\R^n}|z_{n}|^{-\gamma}|\grad(\eta\overline{w})(z,\tau)|^2G_{-\g}(X-z,t-\tau)dz
\end{equation}
for $(X,t)\in Q^+_{r/2}$ and $-r^2<s<-\frac{r^2}{2}$. Combine (\ref{thm5-3}) and (\ref{thm5-4})
to obtain
\begin{equation}\label{thm5-5}
(\eta\overline{w})^2(X,t)\leq C |\tau|\int_{\R^n}|z_{n}|^{-\gamma}|\grad(\eta\overline{w})(z,s)|^2G_{-\g}(X-z,t-\tau)dz
\end{equation}
for every $(X,t)\in Q^+_{r/2}$ and $-r^2<\tau<-\frac{r^2}{2}$. An integration with respect to $\tau$ in (\ref{thm5-5}) shows that
$$(\eta\overline{w})^2(x,t)\leq C\int_{-r^2}^{-r^2/2}\int_{\R^n}|z_{n}|^{-\gamma}|\grad(\eta\overline{w})(z,s)|^2G_{-\g}(X-z,t-\tau)dzd\tau$$
for every $(X,t)\in Q^+_{r/2}$. Now the dichotomy for $\varphi(r)$ in Lemma \ref{Lemma6non-a} provides an $r^{\frac{1+\g}{2}}$ decay for $w$ at free boundary points, as in the proof of Theorem 5 in \cite{AC} (see also Proposition 4.9 in \cite{CFi} for $\g\neq 0$).
\end{proof}



\section{Continuity of the time derivative}\label{SectionTime}

As it was mentioned in the introduction the time derivative can be, in general, discontinuous across free boundary points, but if we assume that there is enough "mass" present from the past near a free boundary point i.e. having a parabolic positive density we can, as it was done in \cite{ACM1}, prove that the time derivative is H\"{o}lder continuous in a neighborhood of such a point. This is the purpose of the present section. We prove it by employing the "hole filling" method introduced for systems by K.-O. Widman (see \cite{Wid}) and thus we extent our previous result (see \cite{ACM1}). Again, we subtract the obstacle from the solution i.e. we work with the zero obstacle. First the density definition

\begin{defn}\label{positive density}
A free boundary point $(x_0',0,t_0)$ is of positive parabolic density with respect to the coincidence set if there exist positive constants $c>0$ and $r_0>0$ such that $$|Q_r'(x'_0,0,t_0)\cap\{u=0\}|\geq c|Q'_r(x'_0,0,t_0)|$$  for all $r<r_0$.
\end{defn}

A key r\^{o}le in the proof of Theorem \ref{Theorem6} below is played by $F_\gamma$, the fundamental solution of $L_\gamma$. Since the operator is invariant under translations in $x'$ and $t$ variables then, if its pole is located on the hyperplane then $F_\gamma=G_\gamma$ is given just by (\ref{fundamental solution})). But if its pole is off the hyperplane, say at $(0,1,0)$, then $F_\gamma=G_\gamma h_\gamma$ 
where $h_\gamma $ is a positive bounded function of $z=\frac{y}{t}$. More precisely, let $g_\gamma(y,t)$ be defined as a quotient of the fundamental solution $F_\gamma$ of $L_\gamma$ with pole $(0,1,0)$ over $G_\gamma(x,y-1,t)$ then away from the pole it satisfies the equation:
$$g_{yy}+(\frac{\gamma}{y}-\frac{y-1}{t})g_y+\frac{\gamma}{2ty}g-g_t=0.$$ 
Therefore looking for one dimensional solutions $h$ in the variable $z=\frac{y}{t}$ we obtain the ordinary differential equation
$$h^{''}_\gamma+(\frac{\gamma}{z}+1)h'_\gamma+\frac{\gamma}{2z}h_\gamma=0$$
which admits for any $-1<\gamma<1$ positive bounded solution for $0<z<\infty$ such that $h(0)=1$. For $\gamma=0$, as expected, $h_\gamma=1$ is a solution. Also, an explicit solution is known in the limiting case $\gamma=1$ i.e.  
$$h(z)=\frac{1}{2\pi}\int_0^{2\pi} e^{-\frac{z}{2}(1-cos\theta)}d\theta.$$  

Now, we are ready to prove the main result of this section :

\begin{thm}\label{Theorem6}
Let $(x'_0,0,t_0)$ be a free boundary point of positive parabolic density with respect to the coincidence set to problem (\ref{FBP3}). Then the time derivative of the solution  is H\"{o}lder continuous in a neighborhood of $(x'_0,0,t_0)$.
\end{thm}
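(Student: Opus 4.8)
We may assume $(x_0',0,t_0)=(0,0,0)$ and, as in the previous two sections, that the obstacle is zero, so that $u$ solves (\ref{FBP4sub}) with harmless $f$. Set $v:=\partial_t u$. By Lemma \ref{a_priori}, $v\in L^\infty$; since $L_\gamma$ commutes with $\partial_t$, $v$ is $L_\gamma$-caloric in $\{y>0\}$. Differentiating the Signorini conditions in $t$, on the thin manifold $\{y=0\}$ we have $\lim_{y\to0^+}y^\gamma v_y=0$ on the noncoincidence set $\{u(\cdot,0,\cdot)>0\}$, while on the coincidence set $\Lambda:=\{u=0\}$ the identity $u\equiv 0$ forces $v=0$ at a.e.\ point of $\Lambda$ (differentiability being available there since $\Lambda$ has positive parabolic density at the origin). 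Thus $v$ solves a \emph{linear} thin problem with zero obstacle and mixed Dirichlet--Neumann data on $\{y=0\}$, and it vanishes on $\Lambda$, which by hypothesis has positive parabolic density at the origin; so it suffices to prove that $v$ is H\"older continuous up to $\{y=0\}$ near $(0,0,0)$.

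The plan is to run K.-O.\ Widman's \emph{hole-filling} scheme for $v$, exactly as in \cite{ACM1} for $\gamma=0$, in the weighted parabolic framework of \S\ref{SectionPrel}. The basic identity is a weighted parabolic Caccioppoli inequality: testing $L_\gamma v=0$ against $\eta^2 v$, where $\eta$ is a parabolic cut-off equal to $1$ on $Q_r^+:=B_r^+\times(-r^2,0)$, supported in $Q_{2r}^+$, with $\eta_y|_{\{y=0\}}=0$ and $|\nabla\eta|+|\eta_t|^{1/2}\le C/r$, and integrating by parts. The boundary term on $\{y=0\}$, namely $\int \eta^2 v\,(y^\gamma v_y)$, vanishes identically, since $v=0$ on $\Lambda$ while $y^\gamma v_y\to 0$ off $\Lambda$ — this is exactly the feature of the linearized Signorini condition that makes the mixed data invisible to the energy. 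One obtains
\[
\sup_{-r^2<t<0}\int_{B_r^+}y^\gamma v^2\,dx+\int_{Q_r^+}y^\gamma|\nabla v|^2\,dx\,dt\ \le\ \frac{C}{r^2}\int_{Q_{2r}^+\setminus Q_r^+}y^\gamma v^2\,dx\,dt .
\]
It is convenient, in order to absorb the initial-slice term and to control the behaviour at infinity, to run this computation against the caloric weight $F_\gamma$ rather than Lebesgue measure; the decomposition $F_\gamma=G_\gamma h_\gamma$ with $h_\gamma(y/t)$ positive and bounded above and below guarantees that this weight is comparable to $G_\gamma$ and stays bounded away from zero near $\Lambda$, which is what is needed in the next step.

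Next, the positive parabolic density of $\Lambda$ at the origin implies — after selecting, among the dyadic radii $r,2r,\dots,2^Nr$, a pair whose parabolic annulus carries a definite fraction of $\Lambda$ (a routine pigeonhole, which is why the final exponent depends on the density constant) — that $\{v=0\}$ occupies a definite portion of the thin slice of that annulus. The weighted Poincar\'e/trace inequalities of \S\ref{SectionPrel} (Propositions \ref{weighted_imb_thm} and \ref{Poincare}, in their parabolic form, on the annular region, for a function vanishing on a set of positive weighted measure in $\{y=0\}$) then give
\[
\int_{Q_{2r}^+\setminus Q_r^+}y^\gamma v^2\,dx\,dt\ \le\ Cr^2\int_{Q_{2r}^+\setminus Q_r^+}y^\gamma|\nabla v|^2\,dx\,dt ,
\]
with $C$ independent of $r$. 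Substituting into the Caccioppoli inequality and \emph{filling the hole} — adding $C\int_{Q_r^+}y^\gamma|\nabla v|^2$ to both sides — yields
\[
\int_{Q_r^+}y^\gamma|\nabla v|^2\,dx\,dt\ \le\ \theta\int_{Q_{2r}^+}y^\gamma|\nabla v|^2\,dx\,dt,\qquad \theta=\theta(n,\gamma,c)\in(0,1),
\]
and the analogous geometric decay for $\sup_t\int_{B_r^+}y^\gamma v^2$. Iterating over dyadic scales produces a Morrey-type bound $\int_{Q_\rho^+}y^\gamma|\nabla v|^2\le C\rho^{\,n+\gamma+2\alpha}$ for some $\alpha>0$, and the weighted parabolic Morrey--Campanato embedding (again through the imbedding and Poincar\'e inequalities of \S\ref{SectionPrel}) upgrades this to $v\in C^\alpha$ up to $\{y=0\}$ near the origin, i.e.\ $\partial_t u$ is H\"older continuous near $(x_0',0,t_0)$.

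The technical heart — and the only point where the fundamental solution $F_\gamma$ and the boundedness of $h_\gamma$ are genuinely used — is the uniform-in-scale Poincar\'e inequality above: one must turn the positive parabolic density of the coincidence set, which lives on the \emph{codimension-one} manifold $\{y=0\}$, into a true $L^2$-to-gradient gain for $v$ on the full $(n+1)$-dimensional annulus, with a constant good enough that $\theta$ in the hole-filling step stays below $1$ uniformly in $r$. Granting this, the iteration and the passage to H\"older continuity are routine.
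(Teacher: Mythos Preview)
Your overall strategy --- Caccioppoli against a weighted test function, Poincar\'e using the positive density of $\Lambda$, then Widman hole-filling and iteration --- is exactly the paper's. The gap is in the step you dismiss as routine: the vanishing of the boundary term $\int_{\{y=0\}}\eta^2 v\,(y^\gamma v_y)$. Your argument is pointwise: $v=0$ on $\Lambda$, $y^\gamma v_y=0$ off $\Lambda$. But at this stage $v=u_t$ is only known to be $L^\infty$, not continuous; you have no control over the trace of $y^\gamma v_y$ near the free boundary, and you do not know that $\partial\Lambda$ has thin measure zero. The paper does \emph{not} argue this way. Instead it works with the penalized approximation $u^\varepsilon$, differentiates to get $v^\varepsilon=(u^\varepsilon)_t$, and tests the equation against $\zeta^2 F_{\gamma,\delta}^{(\xi,\tau)}(v^\varepsilon)^+$ (positive part, not $v^\varepsilon$). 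The boundary term then reads $-\int \zeta^2 F_{\gamma,\delta}\,\beta_\varepsilon'(u^\varepsilon)\,((v^\varepsilon)^+)^2$, which is $\le 0$ simply because $\beta_\varepsilon'\ge 0$; it is dropped by sign, not by cancellation. Only after obtaining the energy inequality at the $\varepsilon$-level does one pass $\varepsilon\to0$. This penalization is precisely what circumvents the trace issue you swept under the rug.

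Two further differences worth noting. First, the paper treats $v^+$ and $v^-$ separately throughout and shows each decays at the free boundary point; this is how one actually concludes continuity (since $v\equiv0$ on $\Lambda$). Your formulation with $v$ itself would need an extra argument. Second, the paper uses the smoothed fundamental solution $F_{\gamma,\delta}^{(\xi,\tau)}$ as part of the test function so that, after sending $\delta\to0$ and taking a supremum over $(\xi,\tau)$, the pointwise quantity $\sup_{Q_{1/5}^+}(v^+)^2$ appears directly on the left-hand side of the iterable inequality; there is no separate Morrey--Campanato step. Your route via an energy Morrey bound and a weighted embedding may well work, but it is not what the paper does, and you would still owe the embedding in the degenerate parabolic setting.
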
 

\begin{proof}As in \S \ref{SectionOptimal}, we extend the obstacle and subtract it from the solution which we still denote by $u$. Since $u_t$ is zero on the coincidence set, $\{u=0\}$, it suffices to prove that $u_t^+$, as well as $u_t^-$, are H\"{o}lder continuous. Actually, we will show that $u_t^+$ and $u_t^-$ decay to zero in parabolic cylinders shrinking to the free boundary point $(x'_0,0,t_0)$. We consider the penalized solution $u^\varepsilon$ of (\ref{FBP4sub}) in $Q_r^+(x_0,t_0)$ with $r<r_0$, where $r_0$ is as in Definition \ref{positive density}. For simplicity we take $(x_0,t_0)=(0,0)$ and $r=1$. Our penalized solution is constructed to satisfy

\begin{equation}\label{penmodel2.1}
\begin{cases}  
\div(y^\gamma \grad u^\varepsilon)-y^\gamma\partial_t u^\varepsilon=f^\varepsilon,   &{\rm{in}} \ \  Q_1^+ \cr  \lim_{y\to 0^+} y^\gamma \partial_y u^\varepsilon
=\beta_\varepsilon(u^\varepsilon)  &{\rm{on}} \ \  Q_1' 
\end{cases}
\end{equation}
where 
$\beta_\varepsilon(s)=-e^{\frac{\varepsilon}{s-\varepsilon}}\chi_{s\leq \varepsilon}(s)$ 
with $\psi^\varepsilon\rightarrow \psi$,  $\phi^\varepsilon\rightarrow \phi$ (locally) uniformly as $\varepsilon\rightarrow 0$. 

Differentiate (\ref{penmodel2.1})  with respect to $t$ to have
\begin{equation}\label{penalized equation}
\begin{cases}  
\div(y^\gamma \grad v^\varepsilon)-y^\gamma\partial_t v^\varepsilon= f^\varepsilon_t,   &{\rm{in}} \ \  Q_1^+ \cr  \lim_{y\to 0^+} y^\gamma \partial_y v^\varepsilon
=\beta'_\varepsilon(u^\varepsilon)v^\varepsilon  &{\rm{on}} \ \  Q_1' \cr
\end{cases}
\end{equation}
where  $v^\varepsilon:=(u^\varepsilon)_t$ and as before $y=x_n$. 
For any $(\xi,\tau)\in Q^+_\frac{1}{5}$ we want to multiply the equation by an appropriate test function and integrate by parts over the set 
$Q_\frac{3}{5}^+(\xi,\tau):=Q_\frac{3}{5}(\xi,\tau)\cap \{x_n\geq0\}\subset Q^+_1$. 
This will give us an estimate which will be iterated to yield the desired result.

As it was done in Theorem 4.8 of \cite{ACM1}, 
$\zeta^2 F_{\gamma,\delta}^{(\xi,\tau)} (v^{\varepsilon})^+$
is the appropriate test function where $(v^{\varepsilon})^+=max\{v^{\varepsilon},0\}$, 
$ F_{\gamma,\delta} ^{(\xi,\tau)} (x,t)$ is a smoothing of the fundamental solution $F_\gamma(x,t)$ with pole at $(\xi,\tau)$ and
$\zeta(x,t)$ is a smooth function supported in 
$Q_\frac{3}{5}^+(\xi,\tau)$ such that 
$\zeta\equiv 1$ for every $(x,t)\in Q_\frac{2}{5}^+(\xi,\tau)$, $|\grad\zeta|\leq c$\ with\ 
$supp(\grad\zeta)\subset (B_\frac{3}{5}^+(\xi,\tau)\setminus B_\frac{2}{5}^+(\xi,\tau))\times(\tau-\frac{9}{25},\tau],
\ 0\leq \zeta_t \leq c$  with
$\ supp (\zeta_t)\subset B_\frac{3}{5}(\xi,\tau) \times (\tau-\frac{9}{25},\tau-\frac{4}{25})$.

Therefore we multiply the equation in (\ref{penalized equation}) by $\zeta^2 F_{\gamma,\delta}^{(\xi,\tau)} (v^{\varepsilon})^+$ 
and integrate by parts over 
$Q_\frac{3}{5}^+(\xi,\tau)$ to obtain
$$\int_{Q_\frac{3}{5}^+(\xi,\tau)}y^\gamma(\grad (\zeta^2 F_{\gamma,\delta}^{(\xi,\tau)} (v^{\varepsilon})^+) \grad v^\varepsilon+(\zeta^2 F_{\gamma,\delta}^{(\xi,\tau)} (v^{\varepsilon})^+)\partial_tv^\varepsilon)dxdt
=-\int_{Q'_\frac{3}{5}(\xi,\tau)}(\zeta^2 F_{\gamma,\delta}^{(\xi,\tau)} (v^{\varepsilon})^+)\beta'(u^{\varepsilon})
v^{\varepsilon} dx'dt$$
\begin{equation}
-\int_{Q_\frac{3}{5}^+(\xi,\tau)}\zeta^2 F_{\gamma,\delta}^{(\xi,\tau)} (v^{\varepsilon})^+ f_tdxdt.
\end{equation}
By calculating appropriately and by noticing that due to the non negativity of $\beta'_\varepsilon$ the boundary integral term has the right sign, so it can be omitted, we obtain
$$\int_{Q_\frac{3}{5}^+(\xi,\tau)}y^\gamma(F_{\gamma,\delta}^{(\xi,\tau)}|\grad (\zeta (v^\varepsilon)^+)|^2
+\frac{1}{2}[\grad F_{\gamma,\delta}^{(\xi,\tau)}\grad (\zeta (v^\varepsilon)^+)^2
+F_{\gamma,\delta}^{(\xi,\tau)}\partial_t (\zeta (v^\varepsilon)^+)^2])dxdt$$
 $$\leq \int_{Q_\frac{3}{5}^+(\xi,\tau)} y^\gamma F_{\gamma,\delta}^{(\xi,\tau)}(|\grad \zeta|^2+\zeta \zeta_t)((v^\varepsilon)^+)^2dxdt
+\frac{1}{2}\int_{Q_\frac{3}{5}^+(\xi,\tau)}y^\gamma \grad F_{\gamma,\delta}^{(\xi,\tau)}\grad \zeta^2((v^\varepsilon)^+)^2dxdt$$ 
$$+\int_{Q_\frac{3}{5}^+(\xi,\tau)} \zeta^2 F_{\gamma,\delta}^{(\xi,\tau)}(v^\varepsilon)^+ f_tdxdt.$$  
Using the properties of the $\delta -$smoothing of the fundamental solution and that, for $\delta$ small enough, the inequalities
$0\leq F_{\gamma,\delta}^{(\xi,\tau)}\leq C(n,\gamma)\ \ in\ \   (B_{\frac{4}{5}}^+\setminus B_\frac{1}{5}^+)\times(-\frac{2}{5},0)$,
and  
$c(n,\gamma)\leq F_{\gamma,\delta}^{(\xi,\tau)} \leq C(n,\gamma)\ \ in\ \ B_{\frac{4}{5}}^+\times (-\frac{2}{5},-\frac{4}{25}),$ 
we have
$$\int_{Q_\frac{2}{5}^+(\xi,\tau)} y^\gamma F_{\gamma,\delta}^{(\xi,\tau)}|\grad (v^{\varepsilon})^+|^2dxdt
+ \fint_{Q_{\delta}^+(\xi,\tau)} y^\gamma(v^{\epsilon})^+dxdt\leq C(n,\gamma)  \int_{-\frac{2}{5}}^{-\frac{4}{25}} \int_{B_\frac{4}{5}^+}y^\gamma ((v^\varepsilon)^+)^2dxdt$$
\begin{equation}\label{estimate 1}
+C(n,\gamma)\int_{-\frac{2}{5}}^0 \int_{B_\frac{4}{5}^+\setminus B_\frac{1}{5}^+}y^\gamma ((v^\varepsilon)^+)^2dxdt+ C(n,\gamma)M
\end{equation}
where 
$M:=||v^\varepsilon||_\infty ||f_t||_\infty$.
 
Now, we first let $\varepsilon$ tend to $0$ in order to obtain (\ref{estimate 1}) for $v^+$, then we let $\delta$ to go to $0$, and, finally, we take the supremum over 
$(\xi,\tau)\in Q_\frac{1}{4}^+$ to obtain, a fortiori,
$$\int_{Q_\frac{1}{5}^+}y^\gamma F_\gamma (x,-t)|\grad v^+|^2dxdt
+\sup_{Q_\frac{1}{5}^+}(v^+)^2
\leq C(n,\gamma)\int_{-\frac{2}{5}}^{-\frac{4}{25}}\int_{B_\frac{4}{5}^+}y^\gamma (v^+)^2dxdt$$
\begin{equation}\label{inequality for iteration}
+C(n,\gamma)\sup_{Q_1^+\setminus Q_\frac{1}{5}^+}(v^+)^2+C(n,\gamma)M.
\end{equation}
Next we want to control the first integral of the right hand side of (\ref{inequality for iteration}) by one similar to the first integral of the left hand side of (\ref{inequality for iteration}). To do that we first multiply the equation in (\ref{penalized equation}) by $\zeta^2(v^\varepsilon)^+$ where $\zeta$ is a smooth cutoff function supported in $B_1\times(-1,t)$ , for any $t\leq-\frac{4}{25}$, $\zeta \equiv 1$ on $B_\frac{4}{5} \times (-\frac{2}{5},t)$, and vanishing near its parabolic boundary with $|\grad\zeta|\leq c$ and $0\leq\zeta_t\leq c$, then we integrate by parts over this set intersected by $\R^n_+$ to have
$$\int_{-1}^t\int_{B_1^+}y^\gamma(\grad (\zeta^2(v^{\varepsilon})^+) \grad v^\varepsilon+(\zeta^2
(v^{\varepsilon})^+)\partial_tv^\varepsilon)dxdt
=-\int_{-1}^t\int_{B_1'}(\zeta^2 (v^{\varepsilon})^+)\beta'(u^{\varepsilon})
v^{\varepsilon} dx'dt$$ 
$$-\int_{-1}^t\int_{B_1^+}\zeta^2
(v^{\varepsilon})^+ f_tdxdt.$$
Again, exploiting the positivity of $\beta'$ and letting $\varepsilon$ go to zero, we arrive, as above but in a much simpler way, at the following inequality
$$\int_{B_ \frac{4}{5}^+}y^\gamma (v^+)^2(x,t)dx
+\int_{-\frac{2}{5}}^t\int_{B_\frac{4}{5}^+}y^\gamma |\grad  v^+|^2dxdt
\leq c(n,\gamma)
\int_{Q_1^+}y^\gamma (v^+)^2dxdt+C(n,\gamma)Mr^{n+2-\gamma}$$
$\forall\ t \in (-\frac{2}{5},-\frac{4}{25})$. Observe that
a sufficient portion of the coincidence set is present in $Q_1$ so that the parabolic Poincar\'{e} inequality can be applied to dominate the integral on the right hand side of the above inequality. Therefore, since the second term on the left hand side is non negative, we have, for every 
$-\frac{2}{5}\leq t\leq-\frac{4}{25}$,
$$\int_{B_ \frac{4}{5}^+}y^\gamma(v^+)^2(x,t)dx
\leq C(n,\gamma)\int_{Q_1^+}y^\gamma|\grad v^+|^2dxdt
+C(n,\gamma)M.$$
We then integrate the above inequality with respect to $t$ from $-\frac{2}{5}$ to $-\frac{4}{25}$ to get
$$\int_{-\frac{2}{5}}^{-\frac{4}{25}}
\int_{B_ \frac{4}{5}^+}y^\gamma(v^+)^2dxdt
\leq C(n,\gamma)\int_{Q_1^+}y^\gamma|\grad(v^-)|^2dxdt
+C(n,\gamma)M.$$
Insert this in (\ref{inequality for iteration}) above and, using the fact that $F_\gamma(x,-t)\geq c(n,\gamma)$ for 
$-\frac{2}{5}\leq t\leq-\frac{4}{25}$, 
to have  
$$\int_{Q_\frac{1}{5}^+}y^\gamma F_\gamma(x,-t)|\grad v^+|^2dxdt
+\sup_{Q_\frac{1}{5}^+}(v^+)^2\leq$$
\begin{equation}\label{hole iteration}
C(n.\gamma)(\int_{Q_1^+ \setminus Q_\frac{1}{5}^+}
y^\gamma F_\gamma(x,-t)|\grad v^-|^2dxdt
+\sup_{Q_1^+\setminus Q_\frac{1}{5}^+}(v^-)^2))
+C'(n,\gamma)M.
\end{equation}
Set 
$\omega(\rho):=\int_{Q_\rho^+}y^\gamma F_\gamma|\grad v^-|^2dxdt
+\sup_{Q_\rho^+}(v^-)^2$, 
then add 
$C(n,\gamma)\omega(\frac{1}{5})$ 
to both sides of (\ref{hole iteration}) and divide the new inequality by $1+C(n,\gamma)$ to have
\begin{equation}\label{iteration 1}
\omega(\frac{1}{5})\leq \lambda \omega (1)+c
\end{equation}
where $\lambda =\frac{C(n,\gamma)}{1+C(n,\gamma)}$. Iteration of (\ref{iteration 1}) implies that there exists an $\alpha=\alpha(\lambda)\in (0,1)$ and a constant 
$C=C(n,\gamma,||u_t||_{\infty},||f_t||_\infty)$ such that
$$\omega (\rho)\leq C\rho^\alpha$$  
for every $0<\rho\leq\frac{r_0}{5}$. This concludes the H\"{o}lder continuity from the past for $v^+$ and, of course, for $v^-$ since the above estimates are identical. The continuity from the future follows, now, by standard methods. 
\end{proof}

\section{Free boundary regularity}\label{SectionFB}
In this section we study the space-time smoothness of the free boundary. Of course, one expects such a result only around free boundary points of certain property. This property has to guarantee that the speed of the interface is under control. The speed involves time derivative of the solution and \S \ref{SectionTime} provides us with the right framework. As in \cite{ACM1}, we work with  the zero obstacle and with the right hand side of the equation to vanish at the free boundary point of interest, which, for simplicity, we take it to be the origin. Therefore we set $\widetilde{u}(x',y,t)=u(x',y,t)-\psi(x',t)+\frac{1}{4(1-s)}(\Delta\psi(0,0)-\partial_t\psi(0,0))y^2$
and we see that, away from the coincidence (zero) set, $\widetilde{u}$ solves $$\div(|y|^\gamma\grad \widetilde{u})-|y|^\gamma\partial_t \widetilde{u}=|y|^\gamma g(x',t)$$
where $g(x',t)=\Delta \psi(0,0)-\partial_y\psi(0,0)-(\Delta\psi(x,t)-\partial_t\psi(x,t))$. It is clear that  the regularity of $\tilde{u}$ is equivalent to the regularity of $u$ as long as $\psi$ is smooth enough, also $\{\tilde{u}(x',y,t)=0\}=\{u(x',y,t)=\psi(x',t)\}$ and upon reflection $\tilde{u}$ in $B^*_1:=\{(x,t)\in \R^{n+1}:|x|^2+t^2<1\}$ satisfies:
\begin{equation}\label{4.28}
\begin{cases}
\tilde{u}(x',0,t)\geq 0 & {\rm{in}}\ \ B^*_1\cap\{y=0\}\\
\tilde{u}(x',y,t)=\tilde{u}(x',-y,t) & {\rm{in}} \ \ B^*_1\\
L_\g \tilde{u}=|y|^\gamma g(x',t) & {\rm{in}}\ \ B^*_1\setminus\{\tilde{u}=0\}\\
L_\g \tilde{u}\leq |y|^\gamma g(x',t) & {\rm{in}}\ \ B^*_1 \\
\end{cases}
\end{equation}
where  $L_\g u:=\div(|y|^\gamma\grad u)-|y|^\gamma\partial_tu$.  

In the rest of this section, for the sake of simplicity we just\; "drop"\; the\; "$\sim$"\; . Notice that our approach will also apply when we subtract an appropriate $L_\gamma-$heat polynomial to compensate with the case of the general non-homogeneous right hand side. 

Observe, now, that, as it was done in  \cite{ACM1}, if we pass the term involving $u_t$ to the right hand side of the equation and "freeze" time then we can apply the elliptic theory of \cite{CSS} and \cite{CDeSSa}. Thus, if the origin is a "non-degenerate" free boundary point, then in space neighborhood  the free boundary interface is smooth; moreover, its blow-up limit, unique up to rotations, is given (in appropriate coordinates) by
$$u_0:=\frac{1}{1-s^2}\rho^{1+s}(2\cos^{2(s+1)}\frac{\theta}{2•}-(1+s)cos^{2s}\frac{\theta}{2•})$$
where $\rho:=\sqrt{x_1^2+y^2}$ and $\theta:=\arctan (\frac{y}{x_1})$. Notice that when $s=\frac{1}{2}$ the above is reduced to 
$\frac{2}{3}\rho^{\frac{3}{2}}\cos(\frac{3}{2}\theta)$ via the identity $\cos(\frac{3}{2}\theta)=(4\cos^2\frac{\theta}{2}-3)cos\frac{\theta}{2}$.

Indeed, these results are very useful to us thanks to the quasi-convexity property. Consequently, in order to preserve the geometry of the interface at a point the "scaling" must be "hyperbolic", and thus a "hyperbolic" definition of non-degeneracy is in order:
 
\begin{defn}\label{nondeg-point}
Let $(x_0,t_0)$ be a free boundary point and 
$B_r^*(x_0,t_0):=\{(x,t)\in R^{n+1}: (x-x_0)^2+(t-t_0)^2<r^2\}$, set $$l:=\limsup_{r\to 0^+}\frac{||u||_{L^\infty(B_r^*(x_0,t_0))}}{r^{1+s}}$$
A point $(x_0,t_0)$ is called a non-degenerate free boundary point if it is of positive parabolic density of the coincidence set and $0<l<\infty$, otherwise degenerate.
\end{defn}

Now, we state the main theorem of this section:

\begin{thm}\label{FreeBdry}
Let $u$ be a solution to (\ref{FBP3}). Assume the origin to be a non-degenerate free boundary point. Then the free boundary is a $C^{1,\alpha}$ $n$-dimensional surface about the origin.
\end{thm}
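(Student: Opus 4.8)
The plan is to reduce the parabolic problem, via the quasi-convexity of $u$ (part (iii) of Lemma~\ref{a_priori}) together with the continuity of $u_t$ near the non-degenerate point (Theorem~\ref{Theorem6}), to a family of essentially elliptic free boundary problems parametrized by time, and then invoke the regularity theory of \cite{ACS}, \cite{CSS}, \cite{CDeSSa} via the appropriate Almgren frequency formula. First I would work with the normalized function $u$ (obstacle subtracted, the $u_t$ term moved to the right-hand side, origin taken to be the non-degenerate free boundary point), so that $u$ solves $L_\gamma u = |y|^\gamma(g(x',t) + u_t)$ away from the coincidence set, with $g(0,0)=0$ and $g + u_t$ small in a parabolic neighborhood because of Theorem~\ref{Theorem6} and the smoothness of $\psi$. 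The key point is that on each time slice $\{t = t_1\}$ for $t_1$ near $0$, the slice $u(\cdot,\cdot,t_1)$ is a solution of the elliptic thin obstacle problem for $\div(|y|^\gamma \nabla \cdot)$ with a right-hand side that is $C^\alpha$ in $(x,t)$ and vanishes at the free boundary point of interest; this is exactly the setting treated (for $\gamma=0$) in \cite{ACS}, \cite{CSS} and (for $-1<\gamma<1$) in \cite{CDeSSa}.

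Next I would carry out the blow-up analysis. By the optimal regularity of the space derivative (Theorem~\ref{Theorem5}), $\sup_{\widetilde Q_{r/2}}|u|\leq Cr^{1+s}$; combined with the non-degeneracy hypothesis $0<l<\infty$ this pins the homogeneity of blow-ups at exactly $1+s$. One forms the rescalings $u_r(x,y,t) := u(rx, ry, r^2 t)/r^{1+s}$ (hyperbolic/parabolic scaling consistent with the $r^{1+s}$ growth and the diffusive scaling of $L_\gamma$) and shows, using the elliptic Almgren monotonicity formula applied slice-by-slice together with the uniform control on $u_t$, that along subsequences $u_r \to u_0$ where $u_0$ is a global $(1+s)$-homogeneous solution of the $\gamma$-thin obstacle problem, independent of $t$. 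The classification of such blow-ups from the elliptic theory forces $u_0$ to be, up to a rotation in the $x'$-variables, the explicit profile
$$u_0 = \frac{1}{1-s^2}\rho^{1+s}\left(2\cos^{2(s+1)}\tfrac{\theta}{2} - (1+s)\cos^{2s}\tfrac{\theta}{2}\right),$$
with $\rho = \sqrt{x_1^2+y^2}$, $\theta = \arctan(y/x_1)$; in particular the blow-up is unique up to rotation. Non-degeneracy of the point guarantees we land in this "regular" stratum rather than in a degenerate one.

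Finally I would upgrade uniqueness of the blow-up and nondegeneracy to an open-ness and $C^{1,\alpha}$ statement. Using the gap between the energy/frequency of the regular profile $u_0$ and that of any other admissible homogeneity (the frequency gap from the elliptic theory), one shows that the set of regular free boundary points is open and that at each such point the blow-up is unique, with a rate. A standard boundary-Harnack / monotonicity argument — again applied on time slices, with the $t$-dependence absorbed into a $C^\alpha$ lower-order term thanks to Theorem~\ref{Theorem6} and the quasi-convexity which controls the sign of $u_t$ — then gives that near the origin the free boundary $\{u(\cdot,0,\cdot)=0\}\cap\{y=0\}$ is, in suitable rotated coordinates, the graph of a $C^{1,\alpha}$ function of the remaining $n-1$ space variables and of $t$; i.e.\ it is a $C^{1,\alpha}$ $n$-dimensional surface in $(x',t)$-space. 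The main obstacle I expect is making the "freeze time and apply elliptic theory" step rigorous and uniform: one must show that the error term $u_t$ (and the rescaled error $r^{2}(g+u_t)(r\cdot)/r^{1+s} = r^{1-s}(g+u_t)(r\cdot)$) really is a harmless perturbation in the Almgren formula and in the boundary Harnack argument, uniformly as $r\to 0$, and that the quasi-convexity is strong enough to prevent the time-slices from degenerating — this is precisely where the hypothesis of positive parabolic density and Theorem~\ref{Theorem6} are essential, and it is the part where the parabolic problem genuinely differs from the purely elliptic one.
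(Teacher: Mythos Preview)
Your overall strategy --- freeze time, treat $u_t$ as a H\"older right-hand side via Theorem~\ref{Theorem6}, and invoke the elliptic thin-obstacle theory of \cite{CSS}, \cite{CDeSSa} --- matches the paper's philosophy, but you diverge at the blow-up step in a way that leaves a genuine gap.

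The paper does \emph{not} use parabolic scaling. It takes the hyperbolic rescaling $u_r(x,y,t)=u(rx,ry,rt)/r^{1+s}$, which is the one compatible with Definition~\ref{nondeg-point} (non-degeneracy is measured over the Euclidean balls $B_r^*$ in $(x,t)$, not over parabolic cylinders). Under this scaling the limit $u_0$ is \emph{not} time-independent: on each slice $\{t=\text{const}\}$ it is $L_\gamma$-harmonic off a convex cone in the $(x',t)$ variables, and the heart of the argument (Lemma~\ref{blowups}) is a four-step classification --- ruling out horizontal faces of the cone, checking the $|x|^{1+s}$ growth on each slice, applying Phragm\'en--Lindel\"of to identify the slice profile, and finally matching $\partial_t u_0(0,0,0^+)$ with $\partial_t u_0(0,0,0^-)$ using the continuity of $u_t$ from Theorem~\ref{Theorem6} --- which forces the cone to be the single half-space $\{x_1+\omega t\le 0\}$ and $u_0$ to be the elliptic profile translated in $x_1$ with speed $\omega$. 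The number $\omega$ is exactly the time derivative of the free-boundary graph at the origin, so space-time differentiability is immediate; $C^\alpha$ variation of the space-time normal then follows by comparing hyperbolic blow-ups at two nearby points together with upper semicontinuity of the elliptic Almgren frequency.

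Your parabolic rescaling $u(rx,ry,r^2t)/r^{1+s}$ does give a $t$-independent limit (since $\partial_T u_r=r^{1-s}u_t\to 0$), and from it you correctly recover the spatial normal at $t=0$. But that blow-up carries no information about the speed of the interface, and nothing in your sketch produces $\partial_t f$. Running the elliptic theory slice by slice yields each section $\{t=t_1\}$ of the free boundary as a $C^{1,\alpha}$ graph in the tangential space variables with uniform constants; ``absorbing the $t$-dependence into a $C^\alpha$ lower-order term'' then gives at best H\"older variation of those sections in $t$, not differentiability in $t$. To close the argument along your route you would still need a separate mechanism that extracts the time derivative of the free boundary --- and the paper's mechanism for this is precisely the hyperbolic blow-up you replaced.
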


To prove Theorem \ref{FreeBdry}, we must identify first, as it was done in \cite{ACM1}, the  global profiles. This is the context of the following lemma, where we consider limits of "hyperbolic" blow up sequences 
  i.e.. limits of $u_r(x',y,t):=\frac{u(rx',ry,rt)}{r^{1+s}}$ as $r\to 0$.

\begin{lemma}\label{blowups}
Let $u$ be a solution to (\ref{FBP3}). If $(0,0)$ is a non-degenerate free boundary point then there exists a sequence  $u_{r_j}$ of blow ups which converges uniformly on compact subsets to a function $u_0$ such that, (in appropriate coordinates),  
\begin{equation}\label{ellipticprofile}
u_0(x',y,t):=\frac{1}{1-s^2}\rho(t)^{1+s}(2\cos^{2(s+1)}\frac{\theta(t)}{2•}-(1+s)cos^{2s}\frac{\theta(t)}{2•})
\end{equation}
where $\rho(t):=\sqrt{(x_1+\omega t)^2+y^2}$ and $\theta(t):=\arctan(\frac{y}{x_1+\o t)})$.
\end{lemma}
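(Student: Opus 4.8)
The plan is to classify the blow-up limits of the hyperbolic rescalings $u_r(x',y,t):=r^{-(1+s)}u(rx',ry,rt)$ by exploiting the three structural properties of $u$ established earlier — the $C^{1,s}$ optimal regularity of Theorem \ref{Theorem5}, the H\"older continuity of $u_t$ from Theorem \ref{Theorem6} (valid precisely because the point is of positive parabolic density), and the quasi-convexity in the $(x',t)$ variables from Lemma \ref{a_priori}(iii). First I would observe that, by Theorem \ref{Theorem5}, the family $\{u_r\}$ is uniformly bounded in $C^{1,s}$ on compact subsets of $\overline{\R^n_+}\times\R$ (the $r^{1+s}$ scaling is the natural homogeneity), and the non-degeneracy $0<l<\infty$ prevents the limit from being trivial; hence along a subsequence $r_j\to 0$ we get $u_{r_j}\to u_0$ in $C^1_{loc}$, with $u_0\geq 0$ on $\{y=0\}$, $u_0$ even in $y$, and $L_\gamma u_0=0$ away from the contact set (the right-hand side $|y|^\gamma g$ scales away since $g$ vanishes at the origin and is continuous). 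The crucial extra input is the scaling of $u_t$: since $(u_r)_t(x',y,t)=r^{-s}u_t(rx',ry,rt)$ and $u_t$ is bounded (indeed H\"older continuous and vanishing on $\{u=0\}$ near the origin), we get $(u_r)_t\to 0$ locally uniformly, so $u_0$ is \emph{independent of $t$ in the rescaled limit after a shear}; more precisely $\partial_t u_0$ is constrained to be a transport term, which is what produces the drift $\omega t$ in $\rho(t)$ and $\theta(t)$.

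The key steps, in order: (1) uniform $C^{1,s}$ bounds and non-triviality, giving a convergent subsequence and a global solution $u_0$ of the (rescaled, homogeneous) Signorini problem on the slab $\overline{\R^n_+}\times\R$; (2) use quasi-convexity, which is scale invariant under hyperbolic rescaling, to conclude $u_0$ is quasi-convex in $(x',t)$, hence in particular the contact set $\{u_0(\cdot,0,\cdot)=0\}$ is convex in each time slice and the restriction to $\{y=0\}$ of $-\lim_{y\to0^+}y^\gamma\partial_y u_0$ is a nonnegative measure supported there; (3) use the vanishing of the rescaled time derivative together with quasi-convexity to show the time dependence of $u_0$ is affine-in-the-moving-frame, i.e. after the change of variables $x_1\mapsto x_1+\omega t$ the function becomes time-independent — this is exactly the place where the "diffusion is dominant" remark in the introduction is used, since the parabolic scaling would otherwise kill the elliptic profile; (4) apply the elliptic classification of global $(1+s)$-homogeneous solutions of the thin obstacle problem with weight $|y|^\gamma$ from \cite{CSS}, \cite{CDeSSa} (the appropriate Almgren frequency being $1+s$ at a non-degenerate point), which forces the frozen-time profile to be, up to rotation in $x'$, the explicit function $\frac{1}{1-s^2}\rho^{1+s}(2\cos^{2(s+1)}\frac{\theta}{2}-(1+s)\cos^{2s}\frac{\theta}{2})$; (5) reintroduce the drift to obtain \eqref{ellipticprofile}.

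I expect the main obstacle to be step (3): showing that the only admissible time dependence is a rigid translation of the elliptic profile, i.e. that the rescaled limit cannot develop genuine parabolic behavior. The heuristic is clear — $(u_r)_t$ scales with $r^{-s}$ against $u_r$ scaling with $r^{-(1+s)}$, so one more negative power, meaning $u_t$ is "lower order" and disappears in the limit relative to the spatial Hessian — but making this rigorous requires the optimal regularity bound on $u$ (to control $u_r$ from above, Theorem \ref{Theorem5}) simultaneously with the nondegeneracy (to control it from below) and the H\"older continuity of $u_t$ with its vanishing on $\{u=0\}$ (Theorem \ref{Theorem6}) to get $(u_r)_t\to 0$ rather than merely boundedness; one then has to argue that a global solution of the thin obstacle problem whose time derivative is identically zero, together with the frequency/homogeneity constraint coming from non-degeneracy, must coincide with the elliptic profile. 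A secondary technical point is checking that the monotone (Almgren-type) quantity that pins the homogeneity to $1+s$ survives the passage to the limit and that the positive-density hypothesis excludes the degenerate homogeneities; this is handled as in \cite{ACM1} using the quasi-convexity to rule out the half-space solution and the lower-order homogeneities. Once $u_0$ is identified the $C^{1,\alpha}$ regularity of the free boundary follows, as in \cite{ACM1}, from the elliptic free boundary theory of \cite{ACS}, \cite{CSS} applied slicewise with uniform-in-time estimates provided by the continuity of $u_t$.
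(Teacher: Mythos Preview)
Your outline contains the right ingredients (compactness from $C^{1,s}$, quasi-convexity forcing a convex cone as the limit contact set, the elliptic classification from \cite{CSS}, \cite{CDeSSa}), but step~(3) has a genuine gap, and the mechanism you propose there does not work.

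You claim that $(u_r)_t=r^{-s}u_t(r\,\cdot,r\,\cdot)\to 0$ locally uniformly because $u_t$ is H\"older continuous and vanishes on the coincidence set. This requires the H\"older exponent $\alpha$ of $u_t$ in Theorem~\ref{Theorem6} to exceed $s$, which the hole-filling argument does not give; all you know is $\alpha\in(0,1)$. So $(u_r)_t$ may well blow up. More importantly, even if one had $(u_r)_t\to 0$, the conclusion would be $\partial_t u_0\equiv 0$, i.e.\ $\omega=0$, which contradicts the very statement you are trying to prove (the limit profile genuinely drifts). Your simultaneous assertions that $(u_r)_t\to 0$ and that ``$\partial_t u_0$ is constrained to be a transport term producing the drift $\omega t$'' are incompatible.

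The paper avoids this entirely by never estimating $(u_r)_t$. Instead it observes that, under the hyperbolic rescaling, the \emph{elliptic} part of the operator dominates: writing $\div(|y|^\gamma\nabla u)=|y|^\gamma(u_t+g)$ and rescaling, one gets $\div(|y|^\gamma\nabla u_r)=r^{1-s}|y|^\gamma\big(u_t(r\,\cdot,r\,\cdot)+g(r\,\cdot,r\,\cdot)\big)$, which tends to zero merely because $u_t$ is bounded and $1-s>0$. Hence for each fixed $t$ the limit $u_0(\cdot,t)$ solves the homogeneous weighted elliptic equation outside its contact set. Quasi-convexity makes the contact set a convex cone in $(x',t)$, bounded by two supporting hyperplanes (one for $t\ge 0$, one for $t\le 0$), so that for each time slice the contact set is a half-line. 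A Phragm\'en--Lindel\"of argument, together with the $|x|^{1+s}$ growth (established via convexity from the $t=0$ profile), then forces the slice to be the elliptic profile translated by the corresponding speed. Only at the very end is the continuity of $u_t$ invoked, and only to match the two speeds across $t=0$; that is where Theorem~\ref{Theorem6} enters, not in making the time derivative vanish under rescaling. The slicewise Liouville/Phragm\'en--Lindel\"of step is the missing idea in your outline.
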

\begin{proof}
Since  $(0,0)$ is assumed to be a non-degenerate free boundary point we have that $0<l<\infty$, therefore we can extract a subsequence $u_{r_j}$ converging uniformly on compact subsets to a non trivial limit, which we denote by  $u_0$. We will prove that $u_0$ corresponds to the elliptic profile and as such it should satisfy (\ref{ellipticprofile}).

This $u_0$ is a $L_\gamma$ harmonic function for every fixed $t$ outside of the coincidence set; and the coincidence set, due to the density assumption, is a convex cone in $\R^n$, or more precisely in 
$(x',t)$ 
variables. Using elliptic Almgren's frequency formula (see \cite{CDeSSa} (Theorem 6.2) or the Appendix of \cite{ACM1} where the right hand side is allowed to be only in $L^p$) and the elliptic theory developed in \cite{CSS}, we conclude that, at $t=0$ 
$u_0$ is the corresponding elliptic profile. Moreover the convex cone is composed by the following two supporting hyperplanes $Ax_1+at=0$ for $t\geq0$ and $Bx_1+bt=0$ for $t\leq0$ with the constants $A\geq0$, $B\geq0$ and $bA\leq{aB}$. 
We want to prove that this convex cone is actually a non-horizontal half space i.e. $A>0$, $B>0$, and $bA=aB$, and $u_0$ admits the stated representation; The argument is divided into 4 steps.

\textit{Step I: $A>0$ and $B>0$.}

For, if $A=0$ then for every $t>0$ $u_0(x,t)$ is $L_\gamma$ harmonic in all of $\R^n$. But for $t=0$ $u_0$ has certain degree of growth, therefore, by continuity of $u_0$, a contradiction. Similarly $B>0$.

\textit{Step II: For each fixed t, $u \sim |x|^{1+s}$ as $|x| \to \infty$ with $x\cdot e_1\geq \varepsilon$ for some $\varepsilon>0$.}

It is enough to show the bound by below. Therefore take a sequence  $x^{(j)}$ such that $|x^{(j)}| \to \infty$ with $x^{(j)} \cdot e_1 \geq \varepsilon$ for every $j$ then by convexity $u_0(x^{(j)},t) \geq u_0(x^{(j)},0) + (u_0)_t(x^{(j)},0)t$, hence by the  behavior of $u_0$ at $t=0$ the result follows.

\textit{Step III: Representation for each fixed $t$.}  We claim that, for each fixed $t$, 
$$u_0(x',y,t):=\frac{1}{1-s^2}\rho(t)^{1+s}(2\cos^{2(s+1)}\frac{\theta(t)}{2•}-(1+s)cos^{2s}\frac{\theta(t)}{2•})$$
where for 
$t>0$, 
$$\rho(t)=\sqrt{(x_1+\frac{a}{A}t)^2+y^2}, \ \ \text{and} \ \ \theta(t) = \arctan\frac{y}{x_1+\frac{a}{A}t}$$
and for 
$t<0$, 
$$\rho(t)=\sqrt{(x_1+\frac{b}{B}t)^2+y^2}, \ \ \text{and} \ \ \theta (t) = \arctan \frac{y}{x_1+\frac{b}{B}t}.$$

Indeed, for each fixed 
$t>0$, $u_0$ 
is an $L_\gamma$  harmonic function which vanishes for 
$\{x_1\leq -\frac{a}{A}t\}\cap\{x_n=0\}$
and grows at infinity with 
$1+s$ 
exponent, therefore by Phragmen-Lindelof theorem we obtain the representation. Analogously, for $t<0$.  

\textit{Step IV: $bA=aB$.}

For, if not then 
$$\partial_t u_0(0,0,0^+)-\partial_t u_0(0,0,0^-)
\neq0,$$ 
whence, by approximation, a contradiction to the continuity of $\partial_t u$ at the origin.

Set $\omega:=\frac{a}{A}$ and the proof is complete.
\end{proof}

Finally we prove our theorem:
\begin{proof}
Obviously the existence of $\omega$ in Lemma \ref{blowups} implies the differentiability of the free boundary at the origin. Also, due to the upper semi-continuity of the elliptic Almgren's frequency function, we have the differentiability of the free boundary for any nearby point $p=(x_p,t_p)$ at least when $t_p\leq 0$, since $u_t$ is continuous there. Now, if $t_p>0$ and $p=(x_p,t_p)$ still near the origin, we observe that the frequency function will converge to $1+s$, and this implies that the positive density will propagate to $p$. Consequently, the point $p=(x_p,t_p)$ will be a free boundary point of positive parabolic density with respect to zero set, which renders $u_t$ continuous there. Hence we have the differentiability of the free boundary there, too. To prove the continuous differentiability of it consider two distinct free boundary points nearby, say p and 0. Assume, on the contrary, that it is not true, that is $\omega(p)$ does not converge to $\omega(0)$ as $p \to 0$. Consider the blow up sequences  $u_{r_j}^{(p)}$ and $u_{r_i}^{(0)}$ around $p$ and $0$, respectively, where $u_{r_j}^{(p)}(x,t):=\frac{u(r_j((x,t)-p))}{r_J^{1+s}}$. 
These sequences converge uniformly to 
the corresponding elliptic profiles.

So, if $\omega(p)$ does not converge to $\omega(0)$ then $u_0^{(p)}$ does not converge to $u_0^{(0)}$, therefore a contradiction to the continuity of the solution $u$. Hence a $C^{\alpha}$ estimate of the free boundary normals follows easily. 
\end{proof}

%
%
%
%
%
%

\section*{Acknowledgments} 
L. Caffarelli was supported by NSF grants. E. Milakis was supported by Marie Curie International Reintegration Grant No 256481 within the 7th European Community Framework Programme and the University of Cyprus research grants. Part of this work was carried out while the first and the third authors were visiting the University of Texas. They wish to thank the Department of Mathematics and the Institute for Computational Engineering and Sciences for the warm hospitality and support


\bibliographystyle{plain}   
\bibliography{biblio}             

\def\cprime{$'$}
\begin{thebibliography}{10}

\bibitem{Athparabolic}
I.~Athanasopoulos.
\newblock Regularity of the solution of an evolution problem with inequalities
  on the boundary.
\newblock {\em Comm. Partial Differential Equations}, 7(12):1453--1465, 1982.

\bibitem{AC}
I.~Athanasopoulos and L.~Caffarelli.
\newblock Optimal regularity of lower dimensional obstacle problems.
\newblock {\em Zap. Nauchn. Sem. S.-Peterburg. Otdel. Mat. Inst. Steklov.
  (POMI)}, 310(Kraev. Zadachi Mat. Fiz. i Smezh. Vopr. Teor. Funkts. 35
  [34]):49--66, 226, 2004.

\bibitem{AC2010}
I.~Athanasopoulos and L.~Caffarelli.
\newblock Continuity of the temperature in boundary heat control problems.
\newblock {\em Advances in Math.}, 224(1):293--315, 2010.

\bibitem{ACMApprox}
I.~{Athanasopoulos}, L.~{Caffarelli}, and E.~{Milakis}.
\newblock {Approximation in free boundary problems}.
\newblock {\em In preparation}.

\bibitem{ACM1}
I.~{Athanasopoulos}, L.~{Caffarelli}, and E.~{Milakis}.
\newblock {Parabolic Obstacle Problems. Quasi-convexity and Regularity}.
\newblock {\em ArXiv 1601.01516}, 2016.

\bibitem{ACS}
I.~Athanasopoulos, L.~Caffarelli, and S.~Salsa.
\newblock The structure of the free boundary for lower dimensional obstacle
  problems.
\newblock {\em Amer. J. Math.}, 130(2):485--498, 2008.

\bibitem{Caff79}
L.~Caffarelli.
\newblock Further regularity for the {S}ignorini problem.
\newblock {\em Comm. Partial Differential Equations}, 4(9):1067--1075, 1979.

\bibitem{CDeSSa}
L.~{Caffarelli}, D.~{De Silva}, and O.~{Savin}.
\newblock {The two membranes problem for different operators}.
\newblock {\em ArXiv 1601.02548}, 2016.

\bibitem{CFi}
L.~Caffarelli and A.~Figalli.
\newblock Regularity of solutions to the parabolic fractional obstacle problem.
\newblock {\em J. Reine Angew. Math.}, 680:191--233, 2013.

\bibitem{CSS}
L.~Caffarelli, S.~Salsa, and L.~Silvestre.
\newblock Regularity estimates for the solution and the free boundary of the
  obstacle problem for the fractional {L}aplacian.
\newblock {\em Invent. Math.}, 171(2):425--461, 2008.

\bibitem{CSmaster}
L.~Caffarelli and L.~Silvestre.
\newblock H\"older regularity for generalized master equations with rough
  kernels.
\newblock In {\em Advances in analysis: the legacy of {E}lias {M}. {S}tein},
  volume~50 of {\em Princeton Math. Ser.}, pages 63--83. Princeton Univ. Press,
  Princeton, NJ, 2014.

\bibitem{ChFra}
F.~Chiarenza and M.~Frasca.
\newblock A note on a weighted {S}obolev inequality.
\newblock {\em Proc. Amer. Math. Soc.}, 93(4):703--704, 1985.

\bibitem{CHSE1}
F.~Chiarenza and R.~Serapioni.
\newblock {A remark on Harnack inequality for degenerate parabolic equations}.
\newblock {\em Rendiconti del Seminario Matematico della Universita di Padova},
  73:179--190, 1985.

\bibitem{FKS}
E.~Fabes, C.~Kenig, and R.~Serapioni.
\newblock {The Local Regularity of Solutions of Degenerate Elliptic Equations}.
\newblock {\em Comm. in PDE}, 7(1):77--116, 1982.

\bibitem{Gu}
C.~E. Guti{\'e}rrez.
\newblock Pointwise estimates for solutions of degenerate parabolic equations.
\newblock {\em Revista de la Union Matematica, Argentina}, 37:261--270, 1991.

\bibitem{GW}
C.~E. Guti{\'e}rrez and R.~L. Wheeden.
\newblock Harnack's inequality for degenerate parabolic equations.
\newblock {\em Comm. Partial Differential Equations}, 16(4-5):745--770, 1991.

\bibitem{ISH}
K.~Ishige.
\newblock {On the behavior of the solutions of degenerate parabolic equations}.
\newblock {\em Nagoya Math. J.}, 155:1--26, 1999.

\bibitem{KK74}
V.~M. Kenkre and R.~S. Knox.
\newblock Generalized-master-equation theory of excitation transfer.
\newblock {\em Physical Review B}, 9(12):5279--5290, 1974.

\bibitem{KMS73}
V.~M. Kenkre, E.~W. Montroll, and M.~F. Shlesinger.
\newblock Generalized master equations for continuous-time random walks.
\newblock {\em Journal of Statistical Physics,}, 9(1):45--50, 1973.

\bibitem{MS}
E.~Milakis and L.~Silvestre.
\newblock Regularity for the nonlinear {S}ignorini problem.
\newblock {\em Adv. Math.}, 217(3):1301--1312, 2008.

\bibitem{NS}
K.~Nystr{\"o}m and O.~Sande.
\newblock Extension properties and boundary estimates for a fractional heat
  operator.
\newblock {\em Nonlinear Analysis: Theory, Methods \& Applications},
  140:29--37, 2016.

\bibitem{ST15}
P.~R. {Stinga} and J.~L. {Torrea}.
\newblock {Regularity theory and extension problem for fractional nonlocal
  parabolic equations and the master equation}.
\newblock {\em ArXiv: 1511.01945}, November 2015.

\bibitem{Wid}
K.-O. Widman.
\newblock H\"older continuity of solutions of elliptic systems.
\newblock {\em Manuscripta Math.}, 5:299--308, 1971.

\end{thebibliography}
\index{Bibliography@\emph{Bibliography}}%

\vspace{2em}

\begin{tabular}{l}
Ioannis Athanasopoulos\\ University of Crete \\ Department of Mathematics  \\ 71409 \\
Heraklion, Crete GREECE
\\ {\small \tt athan@uoc.gr}
\end{tabular}
\begin{tabular}{l}
Luis Caffarelli\\ University of Texas \\ Department of Mathematics  \\ TX 78712\\
Austin, USA
\\ {\small \tt caffarel@math.utexas.edu}
\end{tabular}
\begin{tabular}{lr}
Emmanouil Milakis\\ University of Cyprus \\ Department of Mathematics \& Statistics \\ P.O. Box 20537\\
Nicosia, CY- 1678 CYPRUS
\\ {\small \tt emilakis@ucy.ac.cy}
\end{tabular}

\end{document}